\definecolor{my_color}{rgb}{0.5,0.4,0.1}
\definecolor{MIXT}{rgb}{0.7,0.1,0.2}
\numberwithin{equation}{section}
 \font\tencyr=wncyr10 
\font\tencyi=wncyi10 
\font\tencysc=wncysc10 
\def\rus{\tencyr\cyracc}
\def\rusi{\tencyi\cyracc}
\def\rusc{\tencysc\cyracc}
\newtheorem{thm}{Theorem}[section]
\newtheorem{conj}[thm]{Conjecture}
\newtheorem{qtn}{Problem}
\newtheorem{lm}[thm]{Lemma}
\newtheorem{cl}[thm]{Corollary}
\newtheorem{prop}[thm]{Proposition}
\theoremstyle{remark}
\newtheorem{rmk}[thm]{Remark}
\theoremstyle{definition}
\newtheorem{ex}[thm]{Example}
\newtheorem{df}{Definition}
\newtheorem*{rema}{Remark}
\newcommand {\ah}{{\mathfrak a}}
\newcommand {\be}{{\mathfrak b}}
\newcommand {\ce}{{\mathfrak c}}
\newcommand {\fe}{{\mathfrak E}}
\newcommand {\g}{{\mathfrak g}}
\newcommand {\el}{{\mathfrak l}}
\newcommand {\p}{{\mathfrak p}}
\newcommand {\q}{{\mathfrak q}}
\newcommand {\te}{{\mathfrak t}}
\newcommand {\ut}{{\mathfrak u}}
\newcommand {\z}{{\mathfrak z}}
\newcommand {\gln}{{\mathfrak{gl}_n}}
\newcommand {\sln}{{\mathfrak{sl}_n}}
\newcommand {\slN}{{\mathfrak{sl}_N}}
\newcommand {\spn}{{\mathfrak {sp}}_{2n}}
\newcommand {\son}{{\mathfrak{so}_n}}
\newcommand {\sono}{{\mathfrak{so}_{2n+1}}}
\newcommand {\sone}{{\mathfrak{so}_{2n}}}
\newcommand {\co}{{\mathcal O}}
\newcommand {\BV}{{\mathbb V}}
\newcommand {\BW}{{\mathbb W}}
\newcommand {\md}{/\!\!/}
\newcommand {\isom}{\stackrel{\sim}{\longrightarrow}}
\newcommand {\eus}{\EuScript}
\newcommand {\esi}{\varepsilon}
\newcommand{\ap}{\alpha}
\newcommand{\curge}{\succcurlyeq}
\newcommand{\curle}{\preccurlyeq}
\renewcommand{\le}{\leqslant}
\renewcommand{\ge}{\geqslant}
\newcommand {\ind}{{\mathrm{ind\,}}}
\newcommand {\Lie}{{\mathrm{Lie\,}}}
\newcommand {\rk}{{\mathrm{rk\,}}}
\newcommand {\spe}{{\mathsf{Spec\,}}}
\newcommand {\trdeg}{{\mathrm{trdeg\,}}}
\newcommand {\beq}{\begin{equation}}
\newcommand {\eeq}{\end{equation}}
\newcommand {\GR}[2]{{\textrm{{\bf #1}}}_{#2}}
\newcommand {\ov}{\overline}
\newcommand {\un}{\underline}
\newcommand {\bbk}{\Bbbk}
\newcommand {\dah}{{\Delta_\ah}}
\newcommand {\dce}{{\Delta_\ce}}
\newcommand {\sah}{{\mathfrak S_\ah}}
\newcommand {\mybullet}{{\color{MIXT}\bullet}}
\begin{document}
\setlength{\parskip}{3pt plus 2pt minus 0pt}
\hfill {\scriptsize April 26, 2014}
\vskip1ex

\title[On the orbits of a Borel subgroup in abelian ideals]
{On the orbits of a Borel subgroup in abelian ideals}
\author[D.\,Panyushev]{Dmitri I.~Panyushev}
\address
{Institute for Information Transmission Problems of the R.A.S., 
\hfil\break\indent \quad Bol'shoi Karetnyi per. 19, Moscow 127994, Russia
}
\email{panyushev@iitp.ru}
\subjclass[2010]{14L30, 17B08, 20F55, 05E10}
\keywords{Root system, ad-nilpotent ideal, strongly orthogonal roots, the Pyasetskii duality}
\begin{abstract}
Let $B$ be a Borel subgroup of a semisimple algebraic group $G$, and let $\ah$ be an abelian ideal of 
$\be=\Lie(B)$. The ideal $\ah$ is  determined by certain subset $\dah$ of positive roots, and 
using $\dah$ we give an explicit classification of the $B$-orbits in $\ah$ and $\ah^*$. Our 
description visibly demonstrates that there are finitely many $B$-orbits in both cases.
Then we describe the Pyasetskii correspondence between the $B$-orbits in $\ah$ and $\ah^*$ and 
the invariant algebras $\bbk[\ah]^U$ and $\bbk[\ah^*]^U$, where $U={(B,B)}$. As an application, the 
number of $B$-orbits in the abelian nilradicals is computed. We also discuss related results of 
A.~Melnikov and others for classical groups and state a general conjecture on the closure and 
dimension of the $B$-orbits in the abelian nilradicals, which exploits a relationship between between 
$B$-orbits and  involutions in the Weyl group.
\end{abstract}
\maketitle  

\tableofcontents
\section*{Introduction}
\label{sect:intro}
\noindent
Let $G$ be a connected semisimple algebraic group with Lie algebra $\g$. Fix a Borel subgroup $B$
and a maximal torus $T\subset B$. Let $\ah$ be an abelian ideal of $\be=\Lie(B)$, i.e., $\ah\subset\be$, 
$[\be,\ah]\subset\ah$, and $[\ah,\ah]=0$. It is easily seen that $\ah\subset [\be,\be]=:\ut$ and hence 
$\ah$ is a sum of certain root spaces. Therefore $G{\cdot}\ah$ is the closure of a nilpotent $G$-orbit in 
$\g$. By \cite[Theorem\,2.3]{pr01}, $G{\cdot}\ah$ is the closure of a {\sl spherical\/} $G$-orbit. That result
is based on the characterisation of the spherical nilpotent $G$-orbits obtained in~\cite[(3.1)]{sph-Man}.
Consequently, the $B$-module $\ah$ has finitely many orbits (that is, the set $\ah/B$ is finite).
By a general result of Pyasetskii~\cite{pi75}, this is equivalent to that, for the dual $B$-module $\ah^*$, 
the set $\ah^*/B$ is finite.

In this article, a direct approach to the 
study of $B$-orbits in $\ah$ is provided. We prove that $\ah/B$ is finite, without using the sphericity of
$G{\cdot}\ah$, and point out a representative for each $B$-orbit in $\ah$. Describing $B$-orbits in abelian ideals immediately reduces to {\bf simple} Lie algebras 
and, from now on, we assume that $\g$ is simple. 
Let $\dah$ be the subset of positive roots corresponding to $\ah$. We say that $\eus S\subset\dah$ 
is {\it strongly orthogonal}, if each pair of roots in $\eus S$ is strongly orthogonal in the usual sense, cf.
Definition~\ref{def1} below. We establish a natural bijection between $\ah/B$ and the set, $\sah$, of all 
strongly orthogonal subsets of $\dah$. 
Namely, let us fix nonzero root vectors $\{e_\gamma\}_{\gamma\in\Delta^+}$ and, for any
$\eus S\in\sah$, set $e_\eus S=\sum_{\gamma\in\eus S} e_\gamma\in \ah$. Then 
$\{e_\eus S\}_{\eus S\in\sah}$ is a complete set of representatives of $B$-orbits in $\ah$
(Theorem~\ref{thm:B-orbits-ah}).
Quite independently, without using Pyasetskii's result~\cite{pi75}, we obtain a similar set of representatives for the $B$-orbits in $\ah^*$, also parameterised by $\sah$ (Theorem~\ref{thm:b-orb-dual}).
Both classifications rely on the following simple
observation. Let $\gamma_1,\gamma_2$ be strongly orthogonal roots in $\dah$. 
Set $\Delta^{(+)}_{\gamma_i}=\{\delta\in\Delta^+\mid \gamma_i+\delta\in\dah\}$ and
$\Delta^{(-)}_{\gamma_i}=\{\delta\in\Delta^+\mid \gamma_i-\delta\in\dah\}$. Then
$\Delta^{(+)}_{\gamma_1}\cap \Delta^{(+)}_{\gamma_2}=\varnothing$ and
$\Delta^{(-)}_{\gamma_1}\cap \Delta^{(-)}_{\gamma_2}=\varnothing$ (Lemma~\ref{lm:simple-main}).

For $\eus S\in\sah$, let $\co_\eus S$ (resp. $\co_\eus S^*$) denote the corresponding $B$-orbit in
$\ah$ (resp. $\ah^*$). We point out two sets $\eus C^{l},\eus C^u\in\sah$ that give rise to the dense $B$-orbits in $\ah$ and $\ah^*$, respectively. Furthermore, Pyasetskii's theory yields a natural one-to-one correspondence (duality) between $\ah/B$ and $\ah^*/B$ (see~\ref{subs:FO}), and we explicitly describe it. More precisely, given $\co_\eus S\in\ah/B$, let $(\co_\eus S)^\vee$ denote the
Pyasetskii dual orbit in $\ah^*$. Then $(\co_\eus S)^\vee=\co^*_{\eus S^\vee}$ for some $\eus S^\vee\in\sah$, 
and we determine $\eus S^\vee$ via $\eus S$, see Theorem~\ref{thm:Pyasetskii-general}.

Set $U=(B,B)$. Since both $\ah$ and $\ah^*$ contain dense $B$-orbits, it follows from \cite[\S\,4, Prop.\,5]{sk77} that
the algebras of $U$-invariants $\bbk[\ah]^U$ and $\bbk[\ah^*]^U$ are polynomial. We show that their
Krull dimensions equal $\#\eus C^{l}$ and $\#\eus C^u$,  respectively. This also implies that the number
of $B$-orbits of codimension $1$ in $\ah$ (resp. $\ah^*$) equals $\#\eus C^{l}$ (resp. $\#\eus C^u$). 
Moreover, the description of  $\bbk[\ah^*]^U$ holds true upon replacing $\ah$ with an arbitrary 
$\be$-ideal $\ce\subset\ut$, see Section~\ref{sect:U-inv}.

Let $P$ a standard parabolic subgroup of $G$ with $\Lie(P)=\p$ and $\p^u$ the nilradical of  $\p$. 
The {\it abelian nilradicals}\/ (=\textsf{ANR}) $\p^u$ yield the most interesting examples of abelian ideals 
of $\be$, and for any such $\p^u$ we compute the total number of $B$-orbits and also the number of
orbits $\co_\eus S$ such that $\#\eus S$ is a prescribed integer,
see Section~\ref{sect:ab-nilrad}. 

It is a fundamental problem to describe the closures of $B$-orbits in $\ah$ and $\ah^*$, i.e., the natural 
poset structure on $\ah/B$ and $\ah^*/B$. In general, these two posets are rather unrelated and one 
has two different problems. (The Pyasetskii duality tends to behave as a poset anti-isomorphism, 
but only to some extent!)  Although no general solution to either of the problems is known,  
we have a conjecture on the case in which $\ah$ is an \textsf{ANR} $\p^u$. Let $L$ denote the
standard Levi subgroup of $P$. Then $\p^u$ and $(\p^u)^*$ are dual $L$-modules and the $B$-orbits in
$\p^u$ coincide with the $B\cap L$-orbits. This implies that the posets $\p^u/B$ and $(\p^u)^*/B$ 
are naturally isomorphic, and it is more convenient to state our conjecture for $B$-orbits in $(\p^u)^*$.
To any $\eus S\in\sah$ one associates the involution $\sigma_\eus S\in W$ that is the product 
of reflections corresponding to all roots in $\eus S$. Let $\ell$ be the length function on $W$. For any
$w\in W$, we regard $1-w$ as an endomorphism of $\te$.
It is well-known that $\rk(1-w)$ is the minimal length for presentations of $w$ as a product of 
{\bf arbitrary} reflections in $W$, which is also called the {\it absolute length} of $w$.
For $\ah=\p^u$, we conjecture that 
(i) $\co^*_{\eus S'}\subset
\ov{\co^*_{\eus S}}$ if and only if $\sigma_{\eus S'}\le \sigma_\eus S$ w.r.t the Bruhat order
and (ii)  $\dim \co^*_\eus S=\frac{\ell(\sigma_\eus S)+\rk(1-\sigma_\eus S)}{2}$, see 
Conjecture~\ref{conj:bruhat-closure-ahs}.  But both assertions are false 
for arbitrary maximal abelian ideals, see Example~\ref{ex:D4}.

We also give in Section~\ref{sect:closure} an account on related results for classical algebras $\g$ that 
are due to Melnikov and others~\cite{bagno-chern, chern11,ign,Anna00,Anna06}. In fact, our approach 
provides a unified treatment for problems studied independently for different series of simple Lie 
algebras.

\noindent
{\sl \un{Main notation}.} 
The ground field $\bbk$ is algebraically closed and of characteristic zero.\\
$\Delta$ is the set of roots of $(G,T)$, $\Delta^+$ is the set of positive roots corresponding to
$U$, and $\Pi$ is the set of simple roots in $\Delta^+$;  $W$ is the Weyl group of $(G,T)$ and $\theta$ is the {\it highest root\/} in $\Delta^+$.
For $\gamma\in\Delta^+$, $U_\gamma$ is the root subgroup of $U$ and 
$\ut_\gamma=\Lie(U_\gamma)$. Then $\ut=\bigoplus_{\gamma\in\Delta^+}\ut_\gamma$.

If an algebraic group $Q$ acts on an irreducible affine variety $X$, then $\bbk[X]^Q$ 
is the algebra of $Q$-invariant regular functions on $X$ and $\bbk(X)^Q$
is the field of $Q$-invariant rational functions. If $\bbk[X]^Q$
is finitely generated, then $X\md Q:=\spe \bbk[X]^Q$.

If $x\in X$, then $Q^x$ is the stabiliser of $x$ in $Q$ and $\q^x=\Lie(Q^x)$.

\vskip1ex
\noindent
{\small  {\bf Acknowledgements.} Part of this work was done while I was able to use
rich facilities of the Max-Planck Institut f\"ur Mathematik (Bonn).}

\section{Preliminaries on linear actions with finitely many orbits and abelian ideals} 
\label{sect:prelim}

\noindent
\subsection{Representations with finitely many orbits}    \label{subs:FO}
Let $\nu: Q\to GL(\BV)$ be a representation of a connected algebraic group such that 
$\#(\BV/Q) < \infty$. By \cite{pi75}, the dual representation also enjoys this property. 
More precisely, Pyasetskii provides a natural bijection between two sets 
of $Q$-orbits and thereby obtains that $\#(\BV/Q)=\#(\BV^*/Q)$. It works as follows. Consider the moment map $\mu: \BV\times \BV^* \to \q^*$ and its reduced 
zero-fibre $\mu^{-1}(0)_{red}=:\fe$.
Under the assumption that $\#(\BV/Q)< \infty$, $\fe$ is a ($Q$-stable) variety of pure dimension $\dim \BV$,
and the set of irreducible components of $\fe$, $\text{Irr}(\fe)$, is in a one-to-one correspondence with 
the set of $Q$-orbits in $\BV$, or with the set of $Q$-orbits in $\BV^*$. 
Namely, let $p_1:\BV\times \BV^*\to \BV$ and $p_2:\BV\times \BV^*\to \BV^*$ be two projections.
If $\fe_i\in \text{Irr}(\fe)$, then $p_i(\fe)$, $i=1,2$, contains a dense $Q$-orbit and 
this yields the bijection between $\BV/Q$ and $\BV^*/Q$, which is called the {\it Pyasetskii duality (correspondence)}. 
It is obtained as  the composition of natural bijections:
\[
     \BV/Q  \stackrel{1:1}{\longleftrightarrow}\text{Irr}(\fe) \stackrel{1:1}{\longleftrightarrow} \BV^*/Q .
\]
For $\co\in \BV/Q$,  the Pyasetskii dual $Q$-orbit in $\BV^*$ is denoted by $\co^\vee$. 
The passage $\co\mapsto \co^\vee$ is described directly as follows. For $v\in\co$, let
$(\q{\cdot}v)^\perp$ denote the annihilator of $\q{\cdot}v$ in $\BV^*$. 
Then $Q{\cdot}(\q{\cdot}v)^\perp$ is irreducible and contains the dense $Q$-orbit, which is $\co^\vee$.
This also shows that the  component $\fe_i$ corresponding to 
$\co_i\in \BV/Q$ (or $\co_i^\vee\in \BV^*/Q$) is the closure of the conormal bundle of $\co_i$ 
(or $\co_i^\vee$). Below is a slight extension of the Pyasetskii result.

\begin{lm}   \label{lem:warming-up}
If\/ $\BV$ and $\BW$ are $Q$-modules and  $(\BV\oplus\BW)/Q$ is finite, then there is a natural one-to-one correspondence between $(\BV\oplus\BW)/Q$ and 
$(\BV\oplus\BW^*)/Q$. In particular, $\#(\BV\oplus\BW^*)/Q < \infty $.
\end{lm}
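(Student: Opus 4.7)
The plan is to slice by the projection to $\BV$ and apply the classical Pyasetskii correspondence fibrewise.

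The $Q$-equivariant projection $\pi:\BV\oplus\BW\to\BV$ has the section $v\mapsto(v,0)$, so the induced map $(\BV\oplus\BW)/Q\to\BV/Q$ is surjective and $\BV/Q$ is finite. For any $v\in\BV$, every $Q$-orbit over $Q{\cdot}v$ meets $\{v\}\times\BW$, and $(v,w_1),(v,w_2)$ are $Q$-equivalent iff $w_1,w_2$ are $Q^v$-equivalent. This gives canonical decompositions
\[
(\BV\oplus\BW)/Q \;=\; \bigsqcup_{Q\cdot v\in\BV/Q}\BW/Q^v,\qquad (\BV\oplus\BW^*)/Q \;=\; \bigsqcup_{Q\cdot v\in\BV/Q}\BW^*/Q^v,
\]
so in particular each $\BW/Q^v$ is finite.

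It therefore suffices to produce, for each $v$, a natural bijection $\BW/Q^v\leftrightarrow\BW^*/Q^v$. Since Pyasetskii's result as recalled in~\ref{subs:FO} assumes connectedness, I first apply it to the identity component $Q_1:=(Q^v)^\circ$, obtaining a canonical bijection $\BW/Q_1\leftrightarrow\BW^*/Q_1$ (which also yields $\#\BW^*/Q_1<\infty$, hence $\#\BW^*/Q^v<\infty$). The finite group $\Gamma_v=Q^v/Q_1$ acts on both orbit sets; because the bijection is given by the intrinsic conormal recipe $\co\mapsto\,$(open $Q_1$-orbit in $Q_1{\cdot}(\q_1{\cdot}w)^\perp$) and $Q_1$ is normal in $Q^v$, a direct computation shows $(g{\cdot}\co)^\vee=g{\cdot}\co^\vee$ for $g\in Q^v$, i.e.\ the bijection is $\Gamma_v$-equivariant. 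It therefore descends to a canonical bijection $\BW/Q^v\leftrightarrow\BW^*/Q^v$.

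Assembling these pieces over all $Q{\cdot}v\in\BV/Q$ yields the asserted natural bijection $(\BV\oplus\BW)/Q\leftrightarrow(\BV\oplus\BW^*)/Q$, and in particular the finiteness of the right-hand side. The main obstacle is the non-connectedness of the stabilisers $Q^v$; the crucial technical point is the $\Gamma_v$-equivariance of Pyasetskii's bijection for $Q_1$, which is essentially formal from its moment/conormal construction but deserves an explicit check.
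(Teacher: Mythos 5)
Your proof is correct, but it takes a genuinely different and longer route than the paper's one-line argument. The paper simply observes that the reduced zero-fibre $\fe$ of the moment map on $(\BV\oplus\BW)\oplus(\BV^*\oplus\BW^*)$ depends only on the $Q$-action and the symplectic form, not on the choice of $Q$-stable Lagrangian polarisation, so the very same $\fe$ is the moment variety both for the $Q$-module $\BV\oplus\BW$ and (after regrouping the factors) for $\BV\oplus\BW^*$; composing the resulting Pyasetskii bijections $(\BV\oplus\BW)/Q\longleftrightarrow \mathrm{Irr}(\fe)\longleftrightarrow (\BV\oplus\BW^*)/Q$ finishes the argument with no case analysis. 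You instead slice by the equivariant projection to $\BV$, reduce to the fibrewise claim $\BW/Q^v\longleftrightarrow\BW^*/Q^v$, and invoke the classical Pyasetskii correspondence for each stabiliser. That forces you to confront the possible disconnectedness of $Q^v$, which you handle correctly by passing to the identity component $Q_1$ and descending a $\Gamma_v$-equivariant bijection; the equivariance you flag does indeed hold, since $(\q_1\cdot gw)^\perp=g\cdot(\q_1\cdot w)^\perp$ for $g\in Q^v$ by normality of $Q_1$, and one also needs (and has) independence of the chosen base point $v$ within its orbit, by the same conjugation argument. Your approach is more explicit and elementary in spirit, making the bijection visible fibre by fibre; the paper's approach is shorter, avoids the connectedness issue entirely by never leaving the ambient connected group $Q$, and exhibits the symplectic symmetry underlying Pyasetskii's duality more transparently.
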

\begin{proof} The moment map $(\BV\oplus\BW)\oplus (\BV^*\oplus\BW^*)\to \q^*$ associated with the
$Q$-module $\BV\oplus\BW$ can also be regarded as the moment map for  the
$Q$-module $\BV\oplus\BW^*$.
\end{proof}

\subsection{Ad-nilpotent and abelian ideals of $\be$}
Let $\ce$ be a $B$-stable subspace of $\ut$. Then $\ce$ is an ideal of $\be$ that consists of 
{\sl ad}-nilpotent elements, and we  say that $\ce$ is an {\sl ad}-nilpotent
ideal of $\be$. Every {\sl ad}-nilpotent ideal is a sum of root spaces, i.e., 
$\ce=\bigoplus_{\gamma\in\dce}\g_\gamma$, where $\dce \subset\Delta^+$, and
$\dce$ is called a {\it combinatorial ideal} in $\Delta^+$. Abusing the language, we will often omit the 
word  `combinatorial' and refer to $\dce$ as an ideal, too.
If $[\ce,\ce]=0$, then $\ce$ is an {\it abelian ideal\/} of $\be$ (and $\dce$ is a {\it combinatorial abelian 
ideal}), and we use the letter `$\ah$' for such ideals.  That is, $\ah$ is {\bf always} an abelian ideal of 
$\be$. Although we are primarily interested in $B$-orbits related to abelian ideals and their duals, we 
also obtain some results that hold for arbitrary {\sl ad}-nilpotent ideals. The combinatorial 
ideals $\dce$ has the following characteristic property:
\begin{itemize}
\item if $\gamma\in\dce$, $\mu\in\Delta^+$, and $\gamma+\mu\in\Delta^+$, then
$\gamma+\mu\in\dce$, 
\end{itemize}
and the abelian ideals $\dah$  have the additional characteristic property
\begin{itemize}
\item if $\gamma_1,\gamma_2\in\dah$, then $\gamma_1+\gamma_2\not\in\Delta$ .
\end{itemize}
We equip  $\Delta$  with the usual partial ordering `$\curle$'.
This means that $\mu\curle\nu$ if $\nu-\mu$ is a non-negative integral linear combination
of simple roots. 
For any $M\subset\Delta^+$, let $\min(M)$ (resp. $\max(M)$) denote the set of its minimal (resp. 
maximal) elements with respect to `$\curle$'. A combinatorial ideal $\dce$ is fully determined by 
$\min(\dce)$. If $M\subset \Delta^+$, then $[M]$ is the combinatorial ideal generated by $M$, i.e.,
$[M]:=\{\gamma\in \Delta^+\mid \gamma\curge \nu\text{ for some } \nu\in M\}$. Then 
$\min([M])=\min(M)\subset M$ and $\bigoplus_{\gamma\in [M]}\ut_\gamma$ is the minimal $B$-stable 
subspace containing all $\ut_\gamma$, $\gamma\in M$.
If $M\subset \dah$ for some abelian ideal $\ah$, then $[M]$ is also abelian.

\begin{df}  \label{def1}
Two different roots $\gamma_1,\gamma_2$ are said to be {\it strongly orthogonal}, if neither
of $\gamma_1\pm\gamma_2$ 
is a root. In this case, one has $(\gamma_1,\gamma_2)=0$, where $(\ ,\ )$ is a $W$-invariant scalar product in $\te^*$.
A subset $S\subset \Delta$ is {\it strongly orthogonal}, if each pair of roots in $S$ is strongly orthogonal.
\end{df}
\begin{rema}
If $\Delta$ is simply-laced, then `strongly orthogonal' is the same as `orthogonal'. Therefore, we omit
the word `strongly' in our further examples related to the {\bf ADE}-cases.
\end{rema}
Our results on $B$-orbits in $\ah$ and $\ah^*$  rely on the following simple observation.

\begin{lm}   \label{lm:simple-main}
Suppose that $\gamma_1,\gamma_2\in\dah$ are strongly orthogonal. 
\begin{itemize}
\item[\sf (i)] \ If\/ $\gamma_1+\delta\in\Delta^+$ \ for some $\delta\in\Delta^+$, then $\gamma_2+\delta\not\in\Delta^+$;
\item[\sf (ii)] \ If\/ $\gamma_1-\delta\in\dah$ \ for some $\delta\in \Delta^+$, then $\gamma_2-\delta\not\in\dah$.
\end{itemize}
\end{lm}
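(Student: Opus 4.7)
My plan is to prove both parts by contradiction from a single principle of root combinatorics: if $\alpha,\beta$ are distinct positive roots with $\alpha+\beta\notin\Delta$, then $(\alpha,\beta)\ge 0$, since a strictly negative inner product would force $\alpha+\beta$ to be a root via the $\beta$-string through $\alpha$. The abelian property of $\dah$ supplies exactly hypotheses of this shape: any two elements of $\dah$ sum to something outside $\Delta$, hence any two elements of $\dah$ have non-negative inner product.

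For part (i), I would suppose for contradiction that both $\gamma_1+\delta$ and $\gamma_2+\delta$ lie in $\Delta^+$. The ideal property (applied to $\gamma_1,\gamma_2\in\dah$ with increment $\delta\in\Delta^+$) immediately places both in $\dah$. Now I invoke the principle twice. First, pairing $\gamma_1+\delta\in\dah$ with $\gamma_2\in\dah$ gives $(\gamma_1+\delta,\gamma_2)\ge 0$, and the strong orthogonality $(\gamma_1,\gamma_2)=0$ reduces this to $(\delta,\gamma_2)\ge 0$; symmetrically, $(\delta,\gamma_1)\ge 0$. Second, I observe that the pair $\gamma_1+\delta,\,\gamma_2+\delta$ is itself strongly orthogonal: their sum is forbidden by the abelian property of $\dah$, and their difference equals $\gamma_1-\gamma_2$, which is not a root by the strong orthogonality of $\gamma_1,\gamma_2$. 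Hence $(\gamma_1+\delta,\gamma_2+\delta)=0$, which after expanding and cancelling $(\gamma_1,\gamma_2)=0$ yields $(\delta,\gamma_1)+(\delta,\gamma_2)=-(\delta,\delta)<0$, contradicting the two non-negativities just obtained.

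Part (ii) is the mirror argument, with signs flipped throughout: assuming $\gamma_1-\delta,\gamma_2-\delta\in\dah$, pairing $\gamma_1-\delta$ with $\gamma_2$ in $\dah$ forces $(\gamma_1-\delta,\gamma_2)\ge 0$, i.e.\ $(\delta,\gamma_2)\le 0$, and symmetrically $(\delta,\gamma_1)\le 0$; meanwhile $\gamma_1-\delta$ and $\gamma_2-\delta$ again form a strongly orthogonal pair in $\dah$ (sum excluded by abelianness, difference is $\gamma_1-\gamma_2$), so expanding $(\gamma_1-\delta,\gamma_2-\delta)=0$ yields $(\delta,\gamma_1)+(\delta,\gamma_2)=+(\delta,\delta)>0$, a contradiction.

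I do not anticipate any real obstacle. The one point requiring care is that at each invocation of the sign principle the two roots are distinct positive roots, so that the exceptional possibility $\alpha=-\beta$ does not intrude; this is automatic in our setting because every root that appears lies in $\Delta^+$ by construction, and the strong orthogonality of $\gamma_1,\gamma_2$ rules out any accidental coincidences among the pairs I use.
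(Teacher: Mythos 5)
Your proof is correct, and it takes a genuinely different route from the paper's. The paper's argument first discards $\GR{G}{2}$ (handled separately via Example~\ref{ex:G2}), then uses the fact that, outside $\GR{G}{2}$, $\gamma_i+\delta\in\Delta$ forces $(\gamma_i,\delta)\le 0$, and proceeds by a case split (some product strictly negative versus both zero), producing in each case a forbidden root $\gamma_1+\gamma_2+\delta$ or $\gamma_1-\gamma_2$; part~(ii) is then deduced from part~(i) by applying it to the pair $\gamma_1-\delta,\gamma_2-\delta$. You instead use only the elementary one-directional fact $(\alpha,\beta)<0\Rightarrow\alpha+\beta\in\Delta$ (valid in all root systems, so no $\GR{G}{2}$ exclusion is needed), contraposed to get $(\alpha,\beta)\ge 0$ for any two distinct roots of $\dah$ — the abelian ideal is precisely a large supply of non-summing pairs. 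Combined with the observation that $\gamma_1+\delta$ and $\gamma_2+\delta$ (respectively $\gamma_1-\delta$ and $\gamma_2-\delta$) are themselves strongly orthogonal roots in $\dah$, this yields a sign contradiction in a single stroke with no case analysis, and treats (i) and (ii) by symmetric, independent arguments. Your version is a bit cleaner and more uniform; what the paper's version buys in return is that part~(ii) falls out as a one-line corollary of part~(i) rather than needing its own (mirror) computation. One small point worth making explicit in a final write-up: when you invoke $(\gamma_1+\delta,\gamma_2+\delta)=0$, you are using that strong orthogonality of two \emph{roots} forces orthogonality, so you should note (as you implicitly do) that $\gamma_1+\delta,\gamma_2+\delta$ lie in $\dah\subset\Delta^+$ by the ideal property and are distinct because $\gamma_1\ne\gamma_2$.
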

\begin{proof}
(i) Assume that $\gamma_2+\delta\in\Delta^+$ as well. Excluding the case of $\GR{G}{2}$, which is easy 
to handle directly (see below), we then have $(\gamma_1,\delta)\le 0$ and $(\gamma_2,\delta)\le 0$.
\\ \indent
1$^o$. \ Suppose that one of these scalar products is negative, say $(\gamma_2,\delta)< 0$.
Then $(\gamma_1+\delta,\gamma_2)<0$ and hence $\gamma_1+\delta+\gamma_2\in \dah$, 
which contradicts the fact that $\ah$ is abelian.
\\ \indent
2$^o$. \ If $(\gamma_1,\delta)=(\gamma_2,\delta)=0$, then $(\delta+\gamma_1,\delta+\gamma_2)=
(\delta,\delta)>0$. Then $\gamma_1-\gamma_2\in\Delta$, which contradicts the strong orthogonality.

(ii) If both $\gamma_1-\delta$ and $\gamma_2-\delta$ belong to $\dah$, then these two roots are 
strongly orthogonal. Then applying part (i) to them yields a contradiction.
\end{proof}

\begin{ex}   \label{ex:G2}
For $\g$ of type $\GR{G}{2}$, the unique maximal abelian ideal is $3$-dimensional. If $\Pi=\{\ap,\beta\}$,
where $\ap$ is short, then $\dah=\{2\ap+\beta,\,3\ap+\beta,\,3\ap+2\beta\}$. Here $\dah$ contains no pairs of orthogonal roots!
\end{ex}

\section{Classification of $B$-orbits in $\ah$}
\label{sect:B-ah}

\noindent
For every $\gamma\in\Delta^+$, we fix a nonzero root vector $e_\gamma\in \ut_\gamma$. 
Let $\ah$ be an abelian ideal of $\be$ and $\dah$ the corresponding set of positive roots.
For a nonempty $M\subset \dah$, we set
$e_M:=\sum_{\gamma\in M}e_\gamma \in \ah$. If $M=\varnothing$, then $e_\varnothing=0$.

\begin{lm}   \label{lm:linear-span}
The linear span of $B{\cdot}e_M$,  $\langle B{\cdot}e_M\rangle$, equals 
$\bigoplus_{\gamma\in [M]}\ut_\gamma \subset\ah$.
\end{lm}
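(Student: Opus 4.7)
The plan is to establish the equality $\langle B{\cdot}e_M\rangle = \bigoplus_{\gamma\in [M]}\ut_\gamma$ by proving the two inclusions separately, with essentially all the content going into the nontrivial direction ``$\supset$''.

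First, I would dispose of the easy inclusion ``$\subset$''. Since $[M]$ is the combinatorial ideal of $\Delta^+$ generated by $M$, the subspace $\bigoplus_{\gamma\in[M]}\ut_\gamma$ is $B$-stable, and it visibly contains $e_M=\sum_{\gamma\in M}e_\gamma$. Consequently it contains the full orbit $B{\cdot}e_M$, hence its linear span.

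For the reverse inclusion, my strategy is to invoke the characterization already recorded in Section~\ref{sect:prelim}: $\bigoplus_{\gamma\in[M]}\ut_\gamma$ is the \emph{minimal} $B$-stable subspace of $\ut$ containing $\ut_\gamma$ for every $\gamma\in M$. Since $\langle B{\cdot}e_M\rangle$ is itself $B$-stable (it is the linear span of a $B$-orbit), it suffices to verify that $\ut_\gamma\subset\langle B{\cdot}e_M\rangle$ for each individual $\gamma\in M$. To this end I would restrict the $B$-action to the maximal torus $T\subset B$: for any $t\in T$,
$$ t\cdot e_M \;=\; \sum_{\gamma\in M}\gamma(t)\,e_\gamma, $$
with the characters $\{\gamma|_T:\gamma\in M\}$ pairwise distinct in $\mathrm{Hom}(T,\bbk^*)$. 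A standard Vandermonde argument---choosing $|M|$ elements $t_i\in T$ such that the matrix $(\gamma(t_i))_{\gamma,i}$ is invertible---then extracts each summand $e_\gamma$ individually from $T{\cdot}e_M$, giving $\ut_\gamma\subset\langle T{\cdot}e_M\rangle\subset\langle B{\cdot}e_M\rangle$, as needed.

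No serious obstacle is anticipated: the argument uses only the automatic $B$-stability of the span of an orbit, the ideal property of $[M]$ (which is what makes the target subspace $B$-stable), and the elementary fact that distinct $T$-weights can be separated by evaluation. In particular, neither the abelian hypothesis on $\ah$ nor the action of unipotent root subgroups $U_\delta$ is required at this stage---those will enter only when one moves from linear spans of orbits to the orbits themselves in the theorems that follow.
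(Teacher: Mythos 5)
Your proof is correct and follows essentially the same line as the paper's one-sentence argument, which simply appeals to the fact (recorded in Section~\ref{sect:prelim}) that $\bigoplus_{\gamma\in[M]}\ut_\gamma$ is the minimal $B$-stable subspace containing the $\ut_\gamma$, $\gamma\in M$. You have merely made explicit the standard $T$-weight-separation step needed to see that $\langle B{\cdot}e_M\rangle$ contains each individual $\ut_\gamma$, which the paper leaves implicit.
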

\begin{proof}
It follows from the construction that $\bigoplus_{\gamma\in [M]}\ut_\gamma$ is the smallest 
$B$-stable subspace containing $e_M$.
\end{proof}

For the future use, we record the following obvious fact:
\\ \textbullet \quad
{\it If $M\subset \dah$, then either of\/ $\min(M)$ and $\max(M)$ is a strongly orthogonal set}.
\par
Let $\sah$ denote the set of all strongly orthogonal subsets of $\dah$.

\begin{thm}   \label{thm:B-orbits-ah}
There is a natural one-to-one correspondence
\[
    \ah/B  \stackrel{1:1}{\longleftrightarrow} \{\eus S\subset\dah \mid \eus S\ \text{ is strongly orthogonal}\}
    =\sah.
\] 
This correspondence takes $\eus S$ to the orbit $\co_{\eus S}:=B{\cdot}e_{\eus S}\subset \ah$.
\end{thm}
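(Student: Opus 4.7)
To prove Theorem~\ref{thm:B-orbits-ah}, I would establish two reciprocal facts: (i) every $x \in \ah$ lies in some orbit $\co_\eus S = B \cdot e_\eus S$ with $\eus S \in \sah$, and (ii) distinct $\eus S, \eus S' \in \sah$ give distinct orbits. Lemma~\ref{lm:simple-main} is the combinatorial backbone for both, playing different roles in the two halves of the argument.

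For (i), I would proceed by an explicit pivot reduction of $x = \sum_{\gamma} c_\gamma e_\gamma$. Pick a first pivot $\mu_1 \in \min(\mathrm{supp}(x))$. Since strongly orthogonal roots are linearly independent in $\te^*$, the $T$-action can rescale $c_{\mu_1}$ to $1$. For each $\delta \in \Delta^{(+)}_{\mu_1}$, apply a factor $\exp(t_\delta e_\delta)$, where $t_\delta$ is chosen so that the resulting coefficient at $\mu_1 + \delta$ vanishes; this cleans out the $\mu_1$-cone. Then pick a second pivot $\mu_2$ in the residual support, necessarily strongly orthogonal to $\mu_1$, and repeat. By Lemma~\ref{lm:simple-main}(i), the sets $\Delta^{(+)}_{\mu_j}$ for distinct $\mu_j$ are mutually disjoint, so the cleanup in the cone above $\mu_j$ does not disturb the normalizations already done at other pivots. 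Iterating yields $b \in B$ with $b \cdot x = e_\eus S$ for $\eus S = \{\mu_1,\mu_2,\ldots\} \in \sah$.

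For (ii), the pivot set $\eus S(x) = \{\mu_1,\mu_2,\ldots\}$ produced by this reduction is a $B$-invariant of $x$, and one checks directly that $\eus S(e_\eus S) = \eus S$. Hence $\co_\eus S = \co_{\eus S'}$ forces $\eus S = \eus S'$. The $B$-invariance of $\eus S(x)$, together with its independence from the choices made during pivoting (e.g.\ a tie among elements of $\min(\mathrm{supp}(x))$), again reduces to Lemma~\ref{lm:simple-main}(i). The main obstacle in executing this plan is making the pivot reduction genuinely rigorous: applying $\exp(t_\delta e_\delta)$ can, through higher-order terms of the exponential and through cross-action on non-pivot components $e_\nu$, introduce contributions at $\mu_1 + 2\delta, \mu_1 + 3\delta,\ldots$ or at roots that were not in $\mathrm{supp}(x)$. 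Controlling these side effects requires a careful ordering of the cancellations (for instance, by height of the target, processing lower heights first) together with the abelian property $[\ah,\ah]=0$, which limits how far the higher-order $\ad(e_\delta)^k$ terms can propagate. The disjointness from Lemma~\ref{lm:simple-main}(i) is precisely what makes the pivot framework close consistently, so that the induction terminates at the canonical element $e_\eus S$.
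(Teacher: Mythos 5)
Your part (i) is essentially the paper's argument: both of you reduce a general $v\in\ah$ to a strongly orthogonal support by sweeping out the sets $\Delta^{(+)}_{\gamma_i}$ with root subgroups and citing Lemma~\ref{lm:simple-main}(i) for disjointness, with the termination controlled by a well-founded order on supports (the paper phrases this in terms of the generated combinatorial ideal shrinking; your ``process lower heights first'' is a serviceable variant). One small cleanup: do the $T$-rescaling once at the end, as the paper does, since rescaling $c_{\mu_1}$ to $1$ with a torus element will generally spoil any earlier normalizations at other pivots. It is the linear independence of $\eus S$ as a whole, not one root at a time, that is needed there.

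Part (ii) has a genuine gap. You assert that the pivot set $\eus S(x)$ produced by your reduction is a $B$-invariant of $x$ and that ``one checks directly'' and that this ``again reduces to Lemma~\ref{lm:simple-main}(i)''. This is the whole content of injectivity, and Lemma~\ref{lm:simple-main}(i) alone does not give it. What the paper actually uses is a different invariant: the \emph{linear span} $\langle B\cdot e_\eus S\rangle$, which by Lemma~\ref{lm:linear-span} equals $\bigoplus_{\gamma\in[\eus S]}\ut_\gamma$. If $e_\eus S$ and $e_{\eus S'}$ are $B$-conjugate, their spans coincide, forcing $\Gamma:=\min(\eus S)=\min(\eus S')$ (so at least the first pivot layer is genuinely an orbit invariant, and for the stated reason). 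The paper then writes $b=t^{-1}u$, compares the expansions to conclude $u\cdot e_\Gamma=e_\Gamma$ and $t\cdot e_\Gamma=e_\Gamma$, deduces that the residual parts $\tilde e,\tilde e'$ are $B^{e_\Gamma}$-conjugate, and closes by downward induction on the dimension of the span. Without an argument of this type (or some equivalent proof that the reduction output is conjugation-invariant and choice-independent), the statement ``$\co_\eus S=\co_{\eus S'}$ forces $\eus S=\eus S'$'' is exactly what you still owe. You should also note that the alleged ``tie'' in choosing elements of $\min(\mathrm{supp}(x))$ does not arise: $\min(\mathrm{supp}(x))$ is a well-defined set, and the only freedom is the order in which you kill the cone above it; it is the invariance under conjugation, not under internal reordering, that is the real issue.
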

\begin{proof}
Our proof consists of two parts (assertions):

(a) \ For any $v\in \ah$, the orbit $B{\cdot}v$ contains an element of the form 
$e_\eus S$ for some $\eus S\in \sah$.

(b) \ If $B{\cdot}e_{\eus S}=B{\cdot}e_{\eus S'}$, then $\eus S=\eus S'$.

{\sl Part (a)}. For $v=\sum_{\gamma\in\dah} a_\gamma e_\gamma\in \ah$, we set 
$\text{supp}(v):=\{\gamma\in\dah\mid a_\gamma\ne 0\}$. We describe below a reduction procedure 
that gradually transforms $v$  into $\hat v \in U{\cdot}v$ such that $\text{supp}(\hat v)$ is strongly 
orthogonal. Consider the strongly orthogonal set  
$\Gamma=\min(\text{supp}(v))=:\{\gamma_1,\dots,\gamma_k\}$ and
\[
     \Delta^{(+)}_{\gamma_i}=\{\delta\in\Delta^+\mid \gamma_i+\delta\in \Delta^+\} .
\]
By Lemma~\ref{lm:simple-main}(i), we have 
$\Delta^{(+)}_{\gamma_i}\cap\Delta^{(+)}_{\gamma_j}=\varnothing$ if $i\ne j$. (Note, however, that the 
sets $\gamma_i+\Delta^{(+)}_{\gamma_i}$, $i=1,\dots,k$, are not necessarily disjoint.) 
This implies that using root subgroups $U_\delta\subset U$ with 
$\delta\in \bigsqcup_{i=1}^k \Delta^{(+)}_{\gamma_i}$, one can consecutively get rid of all root 
summands of $v$ whose  roots belong to  
\[
  \eus M_\Gamma:=\bigcup_{i=1}^k (\gamma_i+\Delta^{(+)}_{\gamma_i})=
  \bigl(\bigcup_{i=1}^k (\gamma_i+\Delta^+)\bigr)\cap\Delta^+=:(\Gamma+\Delta^+)\cap\Delta^+
  =(\Gamma+\Delta^+)\cap\dah .
\]
More precisely, one can write
\[
   v=\sum_{\gamma\in\Gamma}a_\gamma e_\gamma
   + \sum_{\nu\in \eus M_{\Gamma}} a_\nu e_\nu + \tilde v  
   \quad (a_{\gamma}{\ne\,}0 \text{ for } \gamma\in\Gamma) ,
\]
where $\tilde v\in\ah$ represents the sum related to the roots outside 
$\Gamma\sqcup \eus M_\Gamma$.\\
Given $\nu\in \min(\eus M_\Gamma\cap \text{supp}(v))$, there are $\gamma\in\Gamma$ and
$\delta\in\Delta^+$ such that $\nu=\gamma+\delta$. Then there exists a unique
$\tilde u\in U_{\delta}$ such that $\nu\not\in\text{supp}(\tilde u{\cdot}v)$
(i.e., we kill the summand with $e_{\nu}$). By Lemma~\ref{lm:simple-main}(i), this transformation does 
not affect the $\Gamma$-group of summands. It may change other summands in the $\eus M_\Gamma$-group and also change $\tilde v$, but the important thing is that $\eus M_\Gamma\cap \text{supp}(\tilde u{\cdot}v)$ 
generates a smaller ideal than $\eus M_\Gamma\cap \text{supp}(v)$ does. Continuing this way, we 
eventually kill all summands in the  $\eus M_\Gamma$-group.
In other words, there is $u\in U$ such that 
\[
   u{\cdot}v=\sum_{\gamma\in\Gamma}a_\gamma e_\gamma+v' ,
\]
where 
$\text{supp}(v')$ is strongly orthogonal to $\Gamma$. Then we set 
$\Gamma'=\Gamma\cup\min(\text{supp}(v'))$ and play the same 
game with $v'$ and the strongly orthogonal set $\Gamma'$. Again, in view of Lemma~\ref{lm:simple-main}(i), making further reductions with $v'$, 
does not change the sum $\sum_{\gamma\in\Gamma'}a_\gamma e_\gamma$, and we can kill all the summands with weights in $\eus M_{\Gamma'}\supset \eus M_\Gamma$.
Eventually, we obtain a vector 
$\displaystyle\sum_{\gamma\in\eus S}a_{\gamma} e_{\gamma}\in U{\cdot}v$, 
where the set $\eus S$ is strongly orthogonal, $\eus S\supset\Gamma'\supset\Gamma$, and all 
coefficients $\{a_{\gamma}\}$  are nonzero.
Finally, since 
the roots in $\eus S$ are linearly independent,  we can make all 
$a_{\gamma}=1$ using a suitable element of $T$. 

{\sl Part (b)}.  Assume that $\eus S,\eus S'\in\sah$ and $e_{\eus S} \underset{B}{\sim} e_{\eus S'}$. 
\\
Clearly, $\langle B{\cdot}e_{\eus S}\rangle=\langle B{\cdot}e_{\eus S'} \rangle=:\hat\ah$, and this is an 
abelian ideal inside $\ah$. If $\Gamma=\min(\Delta_{\hat\ah})$, then $\Gamma \subset \eus S\cap \eus S'$ 
in view of Lemma~\ref{lm:linear-span}. Set $\tilde{\eus S}=\eus S\setminus \Gamma$,
$\tilde{\eus S}'=\eus S'\setminus \Gamma$ and consider the corresponding decompositions
\[
   e_{\eus S}=e_\Gamma+\tilde e, \quad e_{\eus S'}=e_\Gamma+\tilde e' \quad (\tilde e=e_{\tilde{\eus S}},
   \ \tilde e'=e_{\tilde{\eus S}'}) .
\]
Suppose that $b{\cdot}e_{\eus S}=e_{\eus S'}$ and $b=t^{-1}u$ with $t\in T,\ u\in U$. Then
\[
    u{\cdot}e_\Gamma+u{\cdot}\tilde e=t{\cdot}e_\Gamma+t{\cdot}\tilde e' .
\]
If $u{\cdot}e_\Gamma\ne e_\Gamma$, then $u{\cdot}e_\Gamma$ has nonzero summands corresponding to some roots in $\eus M_\Gamma$, which cannot occur in the right-hand side.
(For, if $u=\exp(n)$, $n\in\ut$, then $u{\cdot}e_\Gamma=e_\Gamma+[n,e_\Gamma]+\frac{1}{2}[n,[n,e_\Gamma]]+\dots$.) Hence $u{\cdot}e_\Gamma=e_\Gamma$ and $t{\cdot}e_\Gamma=e_\Gamma$. Therefore,
$\tilde e$ and $\tilde e'$ are $B^{e_\Gamma}$-conjugate. Since $\tilde{\eus S}$ and $\tilde{\eus S}'$ are
strongly orthogonal subsets in a smaller combinatorial ideal, 
arguing by downward induction on $\dim\hat\ah$, we conclude that $\tilde{\eus S}=\tilde{\eus S}'$.
Thus, $\eus S=\eus S'$.
\end{proof}

Because the set $\sah$ is clearly finite, we obtain
\begin{cl}    \label{cor:finite}
 The set of $B$-orbits in $\ah$, $\ah/B$,  is finite.
\end{cl}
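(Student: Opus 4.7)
The plan is simply to invoke Theorem~\ref{thm:B-orbits-ah} and note finiteness of $\sah$. In more detail: since $\dah \subset \Delta^+$ and the root system is finite, the power set $2^{\dah}$ is finite, and the collection $\sah$ of strongly orthogonal subsets of $\dah$ sits inside this finite power set, hence is itself finite. Theorem~\ref{thm:B-orbits-ah} provides an explicit bijection
\[
   \sah \;\stackrel{1:1}{\longleftrightarrow}\; \ah/B, \qquad \eus S \;\longmapsto\; \co_\eus S = B{\cdot}e_\eus S,
\]
so transporting finiteness across this bijection yields $\#(\ah/B) = \#\sah < \infty$.

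There is really no obstacle: the corollary is an immediate bookkeeping consequence of the theorem just proved, and no additional argument is required beyond observing that a subset of a finite set is finite. The only conceivable subtlety (which was in fact the entire content of Theorem~\ref{thm:B-orbits-ah}) was establishing the bijection in the first place; once that is in hand, the corollary follows in one line.
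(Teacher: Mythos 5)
Your proof is correct and is exactly the paper's argument: the paper states the corollary immediately after Theorem~\ref{thm:B-orbits-ah} with the single remark that $\sah$ is clearly finite. Nothing more is needed.
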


Along with the bijection $\ah/B\longleftrightarrow \sah$, we produced a representative in every 
$B$-orbit. We say that $e_\eus S$ is the {\it canonical} representative in $\co_\eus S$ (it depends only 
on the normalisation of root vectors $e_\gamma$, $\gamma\in\dah$).  As a by-product of 
Lemma~\ref{lm:simple-main} and our proof of Theorem~\ref{thm:B-orbits-ah}, one obtains the following 
description of the tangent space of $\co_\eus S$ at $e_\eus S$.

\begin{prop}  \label{prop:tangent-sp}
For $\eus S\in\sah$, the tangent space $[\be,e_\eus S]$ is $T$-stable and the corresponding set of roots is
$\eus S\cup \eus M_{\eus S}$, where $\eus M_{\eus S}=(\eus S+\Delta^+)\cap \Delta^+$. More precisely,
$\eus S$ is the set of roots of\/ $[\te,e_\eus S]$ and $\eus M_{\eus S}$ is the set of roots of\/ $[\ut,e_\eus S]$.
In particular, $\dim\co_\eus S= \#(\eus S)+ \#(\eus M_{\eus S})$.
\end{prop}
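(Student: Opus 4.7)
The plan is to decompose $[\be, e_\eus S]$ via $\be = \te\oplus\ut$ and analyse the two pieces separately. Since $e_\eus S\in\ah\subset\ut$, both $[\te, e_\eus S]$ and $[\ut, e_\eus S]$ land in $\ut$; the goal is to show that each piece is in fact a direct sum of root subspaces of $\ut$, from which $T$-stability and the identification of the root set are immediate.

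For the torus piece, for $h\in\te$ one has $[h, e_\eus S] = \sum_{\gamma\in\eus S}\gamma(h)\, e_\gamma$, so $[\te, e_\eus S]\subseteq\bigoplus_{\gamma\in\eus S}\ut_\gamma$, with equality provided the characters $\{\gamma\}_{\gamma\in\eus S}$ are linearly independent on $\te$. This is routine: strong orthogonality forces pairwise orthogonality for a positive-definite Weyl-invariant form on $\te^*$, and pairwise orthogonal nonzero vectors are linearly independent.

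The main obstacle is to rule out cancellations in $[\ut, e_\eus S]$, and this is exactly where Lemma~\ref{lm:simple-main}(i) is decisive. For each $\delta\in\Delta^+$, one has $[e_\delta, e_\eus S] = \sum_{\gamma\in\eus S} c_{\gamma,\delta}\, e_{\gamma+\delta}$ with $c_{\gamma,\delta}\ne 0$ iff $\gamma+\delta\in\Delta^+$; Lemma~\ref{lm:simple-main}(i) guarantees that for each fixed $\delta$, at most one $\gamma\in\eus S$ contributes nontrivially. Hence $[e_\delta, e_\eus S]$ is either zero or a nonzero scalar multiple of a single root vector $e_{\gamma+\delta}$, and varying $\delta$ yields $[\ut, e_\eus S] = \bigoplus_{\mu\in\eus M_{\eus S}}\ut_\mu$, with weight set exactly $\eus M_{\eus S} = (\eus S+\Delta^+)\cap\Delta^+$.

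To finish, I would verify that $\eus S\cap\eus M_{\eus S} = \varnothing$: an equation $\gamma = \gamma'+\delta$ with $\gamma,\gamma'\in\eus S$ and $\delta\in\Delta^+$ would give $\gamma-\gamma' = \delta\in\Delta^+$, contradicting strong orthogonality of the necessarily distinct (since $\delta\ne 0$) pair $\gamma,\gamma'$. Combining the two pieces, $[\be, e_\eus S] = \bigoplus_{\mu\in\eus S\sqcup\eus M_{\eus S}}\ut_\mu$ is a direct sum of root subspaces, hence $T$-stable; the root sets of $[\te, e_\eus S]$ and $[\ut, e_\eus S]$ are $\eus S$ and $\eus M_{\eus S}$ respectively, and $\dim\co_\eus S = \dim[\be, e_\eus S] = \#\eus S + \#\eus M_{\eus S}$.
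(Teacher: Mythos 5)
Your proof is correct and fills in exactly the argument the paper sketches when it calls the proposition a ``by-product of Lemma~\ref{lm:simple-main} and our proof of Theorem~\ref{thm:B-orbits-ah}'': the decisive ingredient — Lemma~\ref{lm:simple-main}(i) forcing each $[e_\delta, e_{\eus S}]$ to be a scalar multiple of a single root vector, so there are no cancellations in $[\ut, e_{\eus S}]$ — is the same one the paper already used in Part (a) of that theorem. The linear-independence point for $[\te, e_{\eus S}]$ and the disjointness $\eus S\cap\eus M_{\eus S}=\varnothing$ are routine and correctly handled.
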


\noindent
Our next goal is to describe the strongly orthogonal set in $\dah$ corresponding to the dense $B$-orbit 
in $\ah$. We define the {\it lower-canonical set\/} $\eus C^{l}\subset\dah$ inductively, as follows.
We begin with $\Gamma_{1}=\min(\dah)$ and put $\eus M_1=(\Gamma_1+\Delta^+)\cap \Delta^+$.
If $\Gamma_i$ and $\eus M_i$ are already constructed, then we set
$\Gamma_{i+1}=\min\bigl(\dah\setminus(\bigcup_{j=1}^i(\Gamma_j\cup\eus M_j)\bigr)$ and
$\eus M_{i+1}=(\Gamma_{i+1}+\Delta^+)\cap \Delta^+$. Eventually, we get $\Gamma_m=\varnothing$ 
and define $\eus C^{l}=\Gamma_{1}\cup \Gamma_{2}\cup\ldots\cup \Gamma_{m-1}$. By the
construction, the difference of two roots in $\eus C^{l}$ is not a root; and since we are inside an abelian 
ideal, the sum of two roots is never a root. Thus, $\eus C^{l}$ is strongly orthogonal. Whenever we
wish  to stress that $\eus C^{l}$ is determined by $\ah$, we write $\eus C^{l}_\ah$ for it.

\begin{lm}    \label{lm:dense-ah}
The lower-canonical set $\eus C^{l}\in\sah$ gives rise to the dense $B$-orbit in $\ah$.
\end{lm}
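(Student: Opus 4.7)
The plan is to reduce the density of $\co_{\eus C^l}$ to a purely combinatorial identity inside $\dah$, and then verify that identity directly from the inductive construction of $\eus C^l$.

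First, I will invoke Proposition~\ref{prop:tangent-sp}: for any $\eus S\in\sah$,
\[
   \dim\co_\eus S=\#\eus S+\#\eus M_\eus S,\qquad \eus M_\eus S=(\eus S+\Delta^+)\cap\Delta^+.
\]
Since $\eus S\subset\dah$ and, by the combinatorial-ideal property, $\eus M_\eus S\subset\dah$ as well, we always have $\eus S\cup\eus M_\eus S\subset\dah$. Hence $\co_\eus S$ is dense in $\ah$ if and only if $\dah=\eus S\sqcup\eus M_\eus S$ as a disjoint union. So the entire task reduces to showing that the lower-canonical set $\eus C^{l}=\Gamma_1\cup\cdots\cup\Gamma_{m-1}$ satisfies
\[
   \dah=\eus C^{l}\sqcup\eus M_{\eus C^{l}}.
\]

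The covering $\dah\subset\eus C^{l}\cup\eus M_{\eus C^{l}}$ is immediate from the construction: the process stops only when $\Gamma_m=\varnothing$, which means $\dah\setminus\bigcup_{j=1}^{m-1}(\Gamma_j\cup\eus M_j)=\varnothing$; and $\Gamma_j\subset\eus C^{l}$ together with $\eus M_j=(\Gamma_j+\Delta^+)\cap\Delta^+\subset\eus M_{\eus C^{l}}$ does the rest. The only nontrivial point is disjointness, i.e.\ that no $\gamma\in\eus C^{l}$ can be written as $\gamma'+\delta$ with $\gamma'\in\eus C^{l}$ and $\delta\in\Delta^+$. Suppose, for contradiction, that $\gamma\in\Gamma_i$ and $\gamma=\gamma'+\delta$ with $\gamma'\in\Gamma_j$. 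I plan to split into three cases and derive a contradiction in each:
\begin{itemize}
\item If $j<i$, then $\gamma\in(\Gamma_j+\Delta^+)\cap\Delta^+=\eus M_j$, contradicting $\Gamma_i\cap\eus M_j=\varnothing$ built into the construction.
\item If $j=i$, then $\gamma,\gamma'$ both lie in $\Gamma_i=\min(\cdot)$ while $\gamma\succ\gamma'$ strictly, contradicting minimality.
\item If $j>i$, then $\gamma'$ lies in $\dah\setminus\bigcup_{k<j}(\Gamma_k\cup\eus M_k)\subset\dah\setminus\bigcup_{k<i}(\Gamma_k\cup\eus M_k)$, i.e.\ $\gamma'$ was available when $\Gamma_i$ was chosen; since $\gamma'\prec\gamma$ this again contradicts $\gamma\in\min(\cdots)$.
\end{itemize}

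Combining the two parts gives $\#\eus C^{l}+\#\eus M_{\eus C^{l}}=\#\dah=\dim\ah$, whence $\co_{\eus C^{l}}$ is open in $\ah$ and therefore is the (unique) dense $B$-orbit. I anticipate that the only delicate point is the three-case analysis for disjointness; everything else is bookkeeping with the definitions of $\Gamma_i$ and $\eus M_i$. Notably, one never has to use Lemma~\ref{lm:simple-main} here: pairwise strong orthogonality of $\eus C^{l}$ was already established during its construction, and density is a dimension count, so the argument is independent of the more refined properties of $\dah$ used in Theorem~\ref{thm:B-orbits-ah}.
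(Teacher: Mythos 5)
Your proof is correct, and it is essentially the same argument repackaged: the paper proves the lemma by showing directly that $[\be,e_{\eus C^l}]=\ah$, pointing to $\gamma_\mu\in\eus C^l$ with $\mu-\gamma_\mu\in\Delta^+$ for each $\mu\in\dah\setminus\eus C^l$ and invoking Lemma~\ref{lm:simple-main}(i) to see that the resulting tangent directions are independent. You instead cite Proposition~\ref{prop:tangent-sp} to convert that computation into a dimension count and then verify $\dah=\eus C^l\cup\eus M_{\eus C^l}$ combinatorially. Both work, and the covering $\dah\subset\eus C^l\cup\eus M_{\eus C^l}$ is indeed built into the stopping condition $\Gamma_m=\varnothing$.

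Two smaller comments. First, the three-case disjointness analysis is unnecessary: $\eus C^l\cap\eus M_{\eus C^l}=\varnothing$ is automatic from strong orthogonality of $\eus C^l$ (already noted in the paper just before the lemma), since $\gamma=\gamma'+\delta$ with $\gamma,\gamma'\in\eus C^l$ and $\delta\in\Delta^+$ would make $\gamma-\gamma'$ a root. In fact the same is true for any $\eus S\in\sah$, so $\dim\co_\eus S=\#(\eus S\cup\eus M_\eus S)$ always, and density is simply $\eus S\cup\eus M_\eus S=\dah$; no separate disjointness check is ever needed. Second, the final remark that Lemma~\ref{lm:simple-main} is ``never used'' is misleading: Proposition~\ref{prop:tangent-sp}, which your argument leans on, is itself a by-product of Lemma~\ref{lm:simple-main} and the proof of Theorem~\ref{thm:B-orbits-ah}, so the dependence is merely hidden, not removed.
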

\begin{proof}
The above construction of $\eus C^{l}$ as a union 
$\Gamma_{1}\cup \Gamma_{2}\cup\ldots\cup \Gamma_{m-1}$ shows that any 
$\mu\in \dah\setminus \eus C^{l}$ belongs to a unique $\eus M_{i}$. Therefore, there exists 
$\gamma_\mu\in \Gamma_{i}\subset\eus C^{l}$ such that 
$\mu-\gamma_\mu\in \Delta^+$. By Lemma~\ref{lm:simple-main}(i), all the roots
$\mu-\gamma_\mu$ are different since ${\eus C^{l}}$ is strongly orthogonal. This implies that
$[\be,e_{\eus C^{l}}]=\ah$. Hence $B{\cdot}e_{\eus C^{l}}$ is dense in $\ah$.
\end{proof}

\begin{rema}
It is \un{not} true that for each $\mu$ there exists a {\sl unique\/} $\gamma_\mu$. We just pick one $\gamma_\mu$ with the required property.
\end{rema}

\begin{ex}   \label{ex:sln-ah}
For $\g=\sln$, we take $\be=\be(\sln)$ to be the algebra of traceless upper-triangular matrices. 
We stick to the usual matrix interpretation, hence $\ut$ is represented by the right-justified Young 
diagram $(n-1,\dots,2,1)$. See below the diagram for $n=5$:
\[
    \Yvcentermath1 
\ut \sim \tiny\young(\hfil\hfil\hfil\hfil,:\hfil\hfil\hfil,::\hfil\hfil,:::\hfil) \ .
\]
Each box of the diagram represents a positive root, with usual $\esi$-notation. For instance, the
north-east box is the highest root $\theta=\esi_1-\esi_n$. The
{\sl ad}-nilpotent ideals of $\be$ correspond to the right-justified Young diagrams that fit
inside the above diagram of $\ut$. Then the maximal abelian ideals of $\be$ are the nilradicals of 
maximal parabolic subalgebras, i.e., these are the rectangles
$(\underbrace{k,\dots,k}_{n-k})=:(k^{n-k})$, $k=1,\dots,n-1$. The maximal abelian ideals for
$n=5$ are depicted below: 
\[
  \Yvcentermath1
  \tiny\young(\hfil\hfil\hfil\hfil)\ , \quad \tiny\young(\hfil\hfil\hfil,\hfil\hfil\hfil)\ , 
  \quad \tiny\young(\hfil\hfil,\hfil\hfil,\hfil\hfil)\ , \quad \tiny\young(\hfil,\hfil,\hfil,\hfill) \ .
\]
(We do not draw the boxes outside the ideals!) An arbitrary abelian ideal $\ah$ 
corresponds to a diagram that fits inside one of such rectangles. Then $\min(\dah)$ is the set of
south-west corners of the diagram. Furthermore, for any $\gamma\in\Delta^+$, the
set $\{\gamma\}\cup \bigl( (\gamma+\Delta^+)\cap \Delta^+\bigr)$ is the hook with south-west corner
$\gamma$.
\\ \indent
For instance, consider the abelian ideal $\ah$ in $\be(\mathfrak{sl}_n)$, $n\ge 6$, with rows $(3,3,1)$. That is,
$\ah \sim \Yvcentermath1 
\tiny\young(\hfil\hfil\hfil,\hfil\hfil\hfil,::\hfil)$~. Here $\dah=\{\esi_1-\esi_{n-2},\esi_1-\esi_{n-1},\esi_1-\esi_{n},\esi_2-\esi_{n-2},\esi_2-\esi_{n-1},\esi_2-\esi_{n},\esi_3-\esi_{n}\}$ and $\Gamma_1=\{\esi_2-\esi_{n-2},
\esi_3-\esi_{n}\}$. The following diagram depicts $\Gamma_1\cup\eus M_1$, i.e.,
the union of hooks through $\Gamma_1$: \ 
$\Yvcentermath1 
\tiny\young(\ast\hfil\ast,\mybullet\ast\ast,::\mybullet)$ . (The roots in $\Gamma_1$ are denoted by bullets.)
The only remaining box is $\esi_1-\esi_{n-1}$, and this 
gives $\Gamma_2$. Thus, $\eus C^{l}$ is represented by the diagram: \ 
$\Yvcentermath1 
\tiny\young(\hfil\mybullet\hfil,\mybullet\hfil\hfil,::\mybullet)$ . It is not hard to compute that $\#(\ah/B)=20$ 
in this example.
\end{ex}

\section{Classification of $B$-orbits in $\ah^*$ and the Pyasetskii duality}
\label{sect:B-ah-s}

\noindent
For any {\sl ad}-nilpotent $\be$-ideal $\ce\subset\ut$, we think of the $B$-module $\ce^*$ as the quotient $\g/\ce^\perp$, where 
$\ce^\perp$ is the ortho\-com\-plement of $\ce$ in $\g$ with respect to the Killing form. The set of 
$T$-weights in $\ce^*$ is $-\dce$, and we fix a nonzero weight vector $\xi_{-\gamma}\in\ce^*$
for any $\gamma\in\dce$. (The $T$-weight of $\xi_{-\gamma}$ is $-\gamma$.)
It is convenient to choose roots vectors $e_{-\gamma}\in\g$, $\gamma\in\Delta^+$, and then define
$\xi_{-\gamma}$ as the image of $e_{-\gamma}$ in $\g/\ce^\perp$. This yields a choice of weight vectors in all $\ce^*$ that is compatible with the surjections $\ce^*_2\to \ce^*_1$ if $\ce_1\subset\ce_2$.

Although the set of weights of $\ah^*$ is $-\dah$, we prefer to think of it in terms of $\dah$. As this 
reverses the root order on the weights of $\ah^*$, we will have to consider the {\sl maximal\/} elements 
for subsets of $\dah$ in our constructions related to $\ah^*$.  Modulo such alterations, 
the classification of $B$-orbits in $\ah^*$ is being obtained in a fairly similar way.
For $M\subset\dah$, we set $\xi_{M}:=\sum_{\gamma\in M} \xi_{-\gamma}\in \ah^*$. (Again, if 
$M=\varnothing$, then $\xi_{M}=0$.) Let $I_M$ be the largest combinatorial ideal in $\dah$ such that 
$M\cap I_M=\varnothing$. Set 
\[
   \tilde M=\dah\setminus I_M \ \text{ and } \ \ah^*_{\tilde M}=\bigoplus_{\gamma\in \tilde M}\bbk \xi_{-\gamma}
   \subset \ah^* .
\]
Obviously, $M\subset \tilde M$ and $\xi_M\in \ah^*_{\tilde M}$.

\begin{lm}   \label{lm:linear-span-dual}
We have $\langle B{\cdot} \xi_{M}\rangle=\ah^*_{\tilde M}$.
\end{lm}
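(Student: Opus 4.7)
My plan is to compute how $\ut$ acts on the weight vectors $\xi_{-\gamma}\in\ah^*$, then deduce both inclusions by exploiting the $T$-weight decomposition of the linear span. Using $\ah^*=\g/\ah^\perp$ with $\ah^\perp=\te\oplus\ut\oplus\bigoplus_{\beta\in\Delta^+\setminus\dah}\ut_{-\beta}$, a direct bracket computation shows that for any $\delta\in\Delta^+$, the element $e_\delta\in\ut_\delta$ sends $\xi_{-\gamma}$ to a nonzero scalar multiple of $\xi_{-(\gamma-\delta)}$ when $\gamma-\delta\in\dah$, and to $0$ otherwise; indeed, in all remaining cases $[e_\delta,e_{-\gamma}]$ lands in $\te$, in $\ut$, or in $\bigoplus_{\beta\in\Delta^+\setminus\dah}\ut_{-\beta}$, all contained in $\ah^\perp$.

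For the inclusion $\langle B{\cdot}\xi_M\rangle\subseteq\ah^*_{\tilde M}$, I would first unpack $\tilde M$: since $\dah$ is upward-closed in $\Delta^+$, the maximal combinatorial ideal $I_M\subset\dah$ disjoint from $M$ is exactly $\{\gamma\in\dah : \text{no } \mu\in M \text{ satisfies } \mu\succcurlyeq\gamma\}$, so $\tilde M$ is the downward $\dah$-closure of $M$, i.e., $\tilde M=\{\gamma\in\dah:\mu\succcurlyeq\gamma \text{ for some } \mu\in M\}$. The computation above then gives that $\ah^*_{\tilde M}$ is $\ut$-stable, and obvious $T$-stability upgrades this to $B$-stability. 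Since $\xi_M\in \ah^*_{\tilde M}$, the inclusion follows.

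The reverse inclusion proceeds in two steps. First, $T$-equivariance: $\langle B{\cdot}\xi_M\rangle$ is $T$-stable, and the summands $\xi_{-\mu}$ of $\xi_M$ have pairwise distinct $T$-weights, so each $\xi_{-\mu}$ with $\mu\in M$ lies in the span. Second, propagation: for an arbitrary $\gamma\in\tilde M$, pick $\mu\in M$ with $\mu\succcurlyeq\gamma$ and a chain $\mu=\mu_0\succ\mu_1\succ\ldots\succ\mu_k=\gamma$ in $\Delta^+$ with $\alpha_i:=\mu_i-\mu_{i+1}\in\Pi$. Every intermediate $\mu_i$ satisfies $\mu_i\succcurlyeq\gamma\in\dah$, hence $\mu_i\in\dah$ by upward-closedness, so successive applications of $e_{\alpha_0},\ldots,e_{\alpha_{k-1}}$ produce nonzero multiples of $\xi_{-\mu_1},\ldots,\xi_{-\gamma}$ in the span.

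The only mildly delicate point is matching the combinatorial description of $\tilde M$ with the existence of such chains of reductions inside $\dah$; but both of these reductions rest entirely on the upward-closedness of $\dah$ in $\Delta^+$, so no serious obstacle arises.
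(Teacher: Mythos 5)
Your proof is correct, and it is essentially a detailed version of what the paper leaves implicit: the paper's proof is the single sentence that ``$\ah^*_{\tilde M}$ is the smallest $B$-stable subspace of $\ah^*$ containing $\xi_M$,'' whereas you spell out both inclusions via the explicit $\ut$-action on $\ah^*=\g/\ah^\perp$, the identification of $\tilde M$ as the downward $\dah$-closure of $M$, and the $T$-weight-space argument together with saturated chains in the root poset. Same route, more detail; no gaps.
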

\begin{proof}
It is easily seen that $\ah^*_{\tilde M}$ is the smallest $B$-stable subspace of $\ah^*$ containing $\xi_{M}$. 
\end{proof}

Note that $\max(\dah)=\{\theta\}$, since $\g$ is assumed to be simple. Therefore, any non-empty
combinatorial ideal in $\dah$ contains $\theta$. This means that 
$\langle B{\cdot} \xi_{M}\rangle=\ah^*$ if and only if $I_M=\varnothing$ if and only if
$\theta\in M$. 

\begin{thm}    \label{thm:b-orb-dual}
There is a natural one-to-one correspondence
\[
   \ah^*/B  \stackrel{1:1}{\longleftrightarrow} \{\eus S\subset\dah \mid \eus S\ \text{ is strongly orthogonal}\}=\sah.
\] 
This correspondence takes $\eus S$ to the orbit $\co^*_{\eus S}:=B{\cdot}\xi_{\eus S}\subset \ah^*$.
\end{thm}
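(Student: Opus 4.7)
\emph{Proof plan.} The argument closely parallels that of Theorem~\ref{thm:B-orbits-ah} via three systematic substitutions: root vectors $e_\gamma\in\ah$ are replaced by weight vectors $\xi_{-\gamma}\in\ah^*$; $\min$ on subsets of $\dah$ is replaced by $\max$ (because the $U$-flow on $\xi_{-\gamma}$ lowers $\dah$-labels instead of raising them); and Lemma~\ref{lm:simple-main}(ii) is invoked where the original proof uses part~(i). The proof splits into the same two assertions: (a) every $B$-orbit in $\ah^*$ meets $\{\xi_\eus S : \eus S\in\sah\}$, and (b) distinct strongly orthogonal sets produce distinct orbits.

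\emph{Part (a).} Given $v=\sum_{\gamma\in\dah}a_\gamma\xi_{-\gamma}\in\ah^*$, set $\text{supp}(v)=\{\gamma:a_\gamma\ne 0\}$ and take $\Gamma_1=\max(\text{supp}(v))$, which is strongly orthogonal by the bullet point following Lemma~\ref{lm:linear-span}. For $\gamma\in\Gamma_1$ and $\delta\in\Delta^{(-)}_\gamma$, the root subgroup $U_\delta$ displaces $\xi_{-\gamma}$ by a nonzero multiple of $\xi_{-(\gamma-\delta)}$ (plus lower-weight corrections); by Lemma~\ref{lm:simple-main}(ii), the sets $\Delta^{(-)}_{\gamma_i}$ for $\gamma_i\in\Gamma_1$ are pairwise disjoint, so such a $U_\delta$ leaves every other $\xi_{-\gamma_j}$, $j\ne i$, untouched. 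Proceeding from the $\curle$-maximal residual weight downward, one successively kills all summands of $v$ whose weights lie in
\[
   \eus M^-_{\Gamma_1}:=\bigcup_{\gamma\in\Gamma_1}\bigl(\gamma-\Delta^{(-)}_\gamma\bigr)=(\Gamma_1-\Delta^+)\cap\dah,
\]
while the $\Gamma_1$-summands remain intact. One then enlarges $\Gamma_1$ to $\Gamma_2=\Gamma_1\cup\max(\text{new residual support})$ and iterates; the procedure terminates at a strongly orthogonal set $\eus S\supseteq\Gamma_1$. A concluding diagonal rescaling by $T$ (possible since strongly orthogonal roots are linearly independent) brings $v$ to $\xi_\eus S$.

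\emph{Part (b).} Suppose $\xi_\eus S$ and $\xi_{\eus S'}$ are $B$-conjugate. By Lemma~\ref{lm:linear-span-dual}, $\ah^*_{\tilde\eus S}=\ah^*_{\tilde{\eus S}'}$, hence $\tilde\eus S=\tilde{\eus S}'$. A short check using the maximality of $I_\eus S$ shows that $\Gamma:=\max(\tilde\eus S)=\max(\eus S)$ and symmetrically equals $\max(\eus S')$, so $\Gamma\subseteq\eus S\cap\eus S'$. Decompose $\xi_\eus S=\xi_\Gamma+\tilde\xi$ and $\xi_{\eus S'}=\xi_\Gamma+\tilde\xi'$, and write $b=t^{-1}u$ with $b{\cdot}\xi_\eus S=\xi_{\eus S'}$. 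Any nontrivial action of $u$ on $\xi_\Gamma$ would introduce summands at weights in $\eus M^-_\Gamma\setminus\Gamma$, but the right-hand side has support only in $\Gamma\cup(\eus S'\setminus\Gamma)$, and the strong orthogonality of $\eus S'$ (comparing any $\gamma\in\Gamma$ with any $\gamma'\in\eus S'\setminus\Gamma$) forces $\eus S'\setminus\Gamma$ to be disjoint from $\eus M^-_\Gamma$. Consequently, $u{\cdot}\xi_\Gamma=\xi_\Gamma$ and then $t{\cdot}\xi_\Gamma=\xi_\Gamma$, reducing matters to $(\tilde\xi,\tilde\xi')$ inside the smaller combinatorial piece $\tilde\eus S\setminus\Gamma$. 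Downward induction on $\dim\ah^*_{\tilde\eus S}$ yields $\eus S\setminus\Gamma=\eus S'\setminus\Gamma$ and hence $\eus S=\eus S'$.

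\emph{Main obstacle.} The subtle point is ensuring that the cascade of $U$-reductions in Part~(a) can be ordered so that no summand already eliminated is resurrected by a later move, and that the $\Gamma_j$-summands survive all subsequent stages. This is the dual analogue of the combinatorial monotonicity argument used in Theorem~\ref{thm:B-orbits-ah}, and it hinges crucially on Lemma~\ref{lm:simple-main}(ii) providing precisely the disjointness $\Delta^{(-)}_{\gamma_i}\cap\Delta^{(-)}_{\gamma_j}=\varnothing$ needed to decouple the reductions attached to different elements of a strongly orthogonal subset.
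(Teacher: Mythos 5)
Your proof is correct and follows the same route as the paper's, which itself only outlines the dual argument: the three substitutions you identify ($e_\gamma\leadsto\xi_{-\gamma}$, $\min\leadsto\max$, and Lemma~\ref{lm:simple-main}(i)~$\leadsto$~(ii), with Lemma~\ref{lm:linear-span-dual} replacing Lemma~\ref{lm:linear-span}) are exactly the adjustments the author records. Both Part~(a) and Part~(b) match the paper's compressed treatment, including the disjointness of the sets $\Delta^{(-)}_{\gamma_i}$ as the engine of the reduction and the downward induction in the uniqueness step.
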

\begin{proof}
The argument is similar to that in Theorem~\ref{thm:B-orbits-ah}. One should use 
Lemma~\ref{lm:simple-main}(ii)  in place of Lemma~\ref{lm:simple-main}(i) and 
Lemma~\ref{lm:linear-span-dual} in place of Lemma~\ref{lm:linear-span}. For the reader convenience
and future reference, we outline the argument.

{\sl Part (a)}. 
For $\eta=\sum_{\gamma\in\dah}c_\gamma \xi_{-\gamma}\in\ah^*$, we consider 
$\text{supp}(\eta):=\{\gamma\in\dah\mid c_\gamma\ne 0\}$ and $\Gamma^*=\max(\text{supp}(\eta))$. 
Then we set
\[
  \eus M_{\Gamma^*}=\{\nu\in\dah \mid \nu =\gamma-\delta \ \text{ for some } \gamma\in\Gamma^*
  \ \& \ \delta\in\Delta^+\}=: (\Gamma^*-\Delta^+)\cap \dah .
\]
Accordingly, we write
$\eta=\sum_{\gamma\in\Gamma^*}c_\gamma\xi_{-\gamma}+
\sum_{\nu\in\eus M_{\Gamma^*}}c_\nu\xi_{-\nu}+ \tilde\eta$. For any $\gamma\in\Gamma^*$, consider
$\Delta^{(-)}_\gamma=\{\delta\in\Delta^+\mid \gamma-\delta\in\dah\}$.
By Lemma~\ref{lm:simple-main}(ii), the union $\bigcup_{\gamma\in\Gamma^*}\Delta^{(-)}_\gamma$ is
disjoint. Therefore,
using root subgroups $U_\delta$ with $\delta$ in this union, we may gradually kill the whole
$\eus M_{\Gamma^*}$-group of summands for $\eta$, without affecting the $\Gamma^*$-group of summands.
That is, there is $u\in U$ such that 
\[
  u{\cdot}\eta=\sum_{\gamma\in\Gamma^*}c_\gamma\xi_{-\gamma}+\eta' 
\]
and each root in $\text{supp}(\eta')$ is strongly orthogonal to $\Gamma^*$, and so on. Eventually we
obtain a representative in $U{\cdot}\eta$ whose support is strongly orthogonal.

{\sl Part (b)} is similar to the respective part in the proof of Theorem~\ref{thm:B-orbits-ah}.
\end{proof}

\begin{rmk}    \label{rem:zavisit-ahs}
If $\ah_1\subset\ah_2$ are two abelian ideals and $\eus S\in \mathfrak S_{\ah_1}\subset
\mathfrak S_{\ah_2}$, then the $B$-orbit $\co_\eus S\subset\ah_1$ is also a $B$-orbit in $\ah_2$. That 
is, the notation $\co_\eus S$ is unambiguous and can be used with any abelian ideal $\ah$ such that 
$\eus S\in\sah$. But this is not the case for the $B$-orbits in the dual spaces! The orbit $\co^*_\eus S$ depends on the ambient space $\ah^*$. If we write temporarily 
$\co^*_{\eus S,i}\subset \ah^*_i$, then the surjection $p:\ah^*_2\to \ah^*_1$ takes
$\co^*_{{\eus S},2}$ to $\co^*_{{\eus S},1}$, and the corresponding orbit dimensions are usually different.
\end{rmk}
\noindent
We say that $\xi_{\eus S}$ is the {\it canonical representative\/} in $\co_\eus S^*\subset\ah^*$. As a by-product of 
Lemma~\ref{lm:simple-main} and our proof of Theorem~\ref{thm:b-orb-dual}, we obtain the following 
description of the tangent space of $\co_\eus S^*$ at $\xi_{\eus S}$.

\begin{prop}  \label{prop:tangent-sp-s}
For $\eus S\in\sah$, the tangent space $\be{\cdot}\xi_{\eus S}\subset\ah^*$ is $T$-stable and the corresponding set of weights (negative 
roots) is $-(\eus S\cup \eus M_{\eus S}^*)$, where $\eus M_{\eus S}^*=(\eus S-\Delta^+)\cap \dah$. 
More precisely,
$-\eus S$ is the set of roots of\/ $\te{\cdot}\xi_{\eus S}$ and $-\eus M_{\eus S}^*$ is the set of roots of\/ 
$\ut{\cdot}\xi_{\eus S}$.
In particular, $\dim\co_\eus S^*= \#(\eus S)+ \#(\eus M_{\eus S}^*)$.
\end{prop}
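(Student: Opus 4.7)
The plan is to compute $\be{\cdot}\xi_\eus S$ directly by decomposing $\be=\te\oplus\ut$ and analyzing the $T$-weights contributed by each summand; the key inputs are the strong orthogonality of $\eus S$ (to secure $T$-stability and rule out accidental cancellations in the torus part) and Lemma~\ref{lm:simple-main}(ii) (to rule out cancellations in the unipotent part).

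First I would treat the torus contribution. For $h\in\te$ one has $h{\cdot}\xi_{-\gamma}=-\gamma(h)\xi_{-\gamma}$, so
\[
   h{\cdot}\xi_\eus S=\sum_{\gamma\in\eus S}\bigl(-\gamma(h)\bigr)\xi_{-\gamma}.
\]
Since strongly orthogonal roots are pairwise orthogonal, hence linearly independent in $\te^*$, the linear functionals $\{\gamma\mid_\te\}_{\gamma\in\eus S}$ are linearly independent. Therefore $\te{\cdot}\xi_\eus S=\bigoplus_{\gamma\in\eus S}\bbk\,\xi_{-\gamma}$, a $T$-stable subspace with weight set $-\eus S$.

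Next I would handle the unipotent contribution. For $\delta\in\Delta^+$ and $\gamma\in\eus S$, the vector $e_\delta{\cdot}\xi_{-\gamma}$ is a $T$-weight vector of weight $\delta-\gamma$; by construction of $\ah^*$ as $\g/\ah^\perp$, it is nonzero in $\ah^*$ exactly when $\gamma-\delta\in\dah$, i.e.\ when $\delta\in\Delta^{(-)}_\gamma$, and in that case it is a nonzero scalar multiple of $\xi_{-(\gamma-\delta)}$. Hence each element $e_\delta{\cdot}\xi_\eus S=\sum_{\gamma\in\eus S}e_\delta{\cdot}\xi_{-\gamma}$ is a sum of weight vectors whose weights lie in $-\eus M^*_\eus S$. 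Lemma~\ref{lm:simple-main}(ii) says that the sets $\Delta^{(-)}_\gamma$, $\gamma\in\eus S$, are pairwise disjoint, so for each $\mu\in\eus M^*_\eus S$ there is a \emph{unique} pair $(\gamma,\delta)\in\eus S\times\Delta^+$ with $\mu=\gamma-\delta$. Consequently no cancellation occurs, every weight in $-\eus M^*_\eus S$ is actually realised, and $\ut{\cdot}\xi_\eus S=\bigoplus_{\mu\in\eus M^*_\eus S}\bbk\,\xi_{-\mu}$.

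To finish, I would observe that $\eus S\cap\eus M^*_\eus S=\varnothing$: if $\gamma-\delta=\gamma'$ with $\gamma,\gamma'\in\eus S$ and $\delta\in\Delta^+$, then $\delta=\gamma-\gamma'$; but $\gamma=\gamma'$ gives $\delta=0\notin\Delta^+$, while $\gamma\ne\gamma'$ contradicts strong orthogonality of $\eus S$. Combining the two displayed equalities yields
\[
   \be{\cdot}\xi_\eus S \;=\; \bigoplus_{\gamma\in\eus S}\bbk\,\xi_{-\gamma}\;\oplus\;\bigoplus_{\mu\in\eus M^*_\eus S}\bbk\,\xi_{-\mu},
\]
so this tangent space is $T$-stable with weight set $-(\eus S\cup\eus M^*_\eus S)$ and dimension $\#(\eus S)+\#(\eus M^*_\eus S)$, proving all three claims of the Proposition. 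The main delicate point is precisely the non-cancellation in the $\ut$-part; without Lemma~\ref{lm:simple-main}(ii) one could a priori have two distinct pairs $(\gamma,\delta)$ producing the same weight $-\mu$ and a vanishing sum of root vectors, which would lower the dimension. Everything else is a bookkeeping exercise.
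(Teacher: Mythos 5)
The paper supplies no explicit argument for Proposition~\ref{prop:tangent-sp-s}; it is simply declared a ``by-product'' of Lemma~\ref{lm:simple-main}(ii) and the proof of Theorem~\ref{thm:b-orb-dual}. Your proof fills in exactly the intended computation---splitting $\be=\te\oplus\ut$, handling the torus part via linear independence of $\eus S$, and using Lemma~\ref{lm:simple-main}(ii) for the unipotent part---so the approach is the right one. The torus computation and the observation $\eus S\cap\eus M^*_\eus S=\varnothing$ are both correct.

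There is, however, one false intermediate claim. You assert that the pairwise disjointness of the $\Delta^{(-)}_\gamma$, $\gamma\in\eus S$, implies that for each $\mu\in\eus M^*_\eus S$ there is a \emph{unique} pair $(\gamma,\delta)\in\eus S\times\Delta^+$ with $\mu=\gamma-\delta$. That is not what the disjointness says: it says that for a fixed $\delta$ there is at most one $\gamma\in\eus S$ with $\gamma-\delta\in\dah$, not that for a fixed $\mu$ there is at most one $\gamma\in\eus S$ with $\gamma-\mu\in\Delta^+$. The latter is in fact false. For instance, in $\mathfrak{sl}_5$ take $\dah$ to be the $2\times 3$ rectangle $\{\esi_i-\esi_j : i\le 2 < j\}$, $\eus S=\{\gamma_1,\gamma_2\}=\{\esi_1-\esi_4,\,\esi_2-\esi_5\}$, and $\mu=\esi_2-\esi_4\in\dah$; then $\gamma_1-\mu=\esi_1-\esi_2$ and $\gamma_2-\mu=\esi_4-\esi_5$ are both positive roots, so the pair is not unique. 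Fortunately, uniqueness of the pair is never needed. The correct use of Lemma~\ref{lm:simple-main}(ii) is the per-$\delta$ statement: for each $\delta\in\Delta^+$, the vector $e_\delta{\cdot}\xi_\eus S=\sum_{\gamma\in\eus S}e_\delta{\cdot}\xi_{-\gamma}$ has at most one nonzero summand, hence is either zero or a nonzero scalar multiple of a single $\xi_{-\mu}$ with $\mu=\gamma-\delta\in\eus M^*_\eus S$. Therefore $\ut{\cdot}\xi_\eus S$, being the span of these individual weight vectors, equals $\bigoplus_{\mu\in\eus M^*_\eus S}\bbk\,\xi_{-\mu}$ (it is not merely contained in this space, which is the pitfall you were right to worry about). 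Replacing the false uniqueness claim with this per-$\delta$ statement repairs the proof and the rest of your argument stands.
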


{\bf Warning}. To describe a tangent space of a $B$-orbit in $\ah$, we use the set 
$\eus M_\eus S=(\eus S+\Delta^+)\cap\dah$, which is the same as $(\eus S+\Delta^+)\cap\Delta^+$, 
because $\ah$ is an ideal. However,
$\eus M_\eus S^*=(\eus S-\Delta^+)\cap \dah$ is usually a proper subset of 
$(\eus S-\Delta^+)\cap \Delta^+$.

Next, we describe the strongly orthogonal set corresponding to the dense $B$-orbit in $\ah^*$.
The {\it upper-canonical set\/} $\eus C^{u}\subset\dah$ is defined inductively, as follows.
We begin with  $\Gamma^*_1=\max(\dah)$, which incidentally is just $\{\theta\}$, and put
$\eus M_1^*=(\Gamma^*_1-\Delta^+)\cap \dah$.
When $\Gamma^*_i$ and $\eus M^*_i$ are already constructed, we define
$\Gamma^*_{i+1}=\max\bigl(\dah\setminus(\bigcup_{j=1}^i(\Gamma^*_j\cup\eus M^*_j)\bigr)$ and
$\eus M^*_{i+1}=(\Gamma^*_{i+1}-\Delta^+)\cap \dah$.
Eventually, we obtain $\Gamma^*_n=\varnothing$ and set 
$\eus C^{u}=\Gamma^*_{1}\cup \Gamma^*_{2}\cup\dots\cup \Gamma^*_{n-1}$. It is quite clear that
$\eus C^{u}$ is strongly orthogonal. Whenever we wish to stress that $\eus C^{u}$ is determined by
$\ah$, we write $\eus C^{u}_\ah$ for it.

\begin{lm}    \label{lm:dense-ah-d}
The upper-canonical set $\eus C^{u}\in\sah$ gives rise to the dense $B$-orbit in $\ah^*$.
\end{lm}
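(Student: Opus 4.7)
The plan is to mimic the proof of Lemma~\ref{lm:dense-ah}, working with the dual tangent space description supplied by Proposition~\ref{prop:tangent-sp-s}. Recall that $\dim\co^*_{\eus S}=\#\eus S+\#\eus M^*_\eus S$, where $\eus M^*_\eus S=(\eus S-\Delta^+)\cap\dah$, and that these two subsets of $\dah$ are automatically disjoint (indeed, if $\gamma'=\gamma-\delta$ for distinct $\gamma,\gamma'\in\eus S$, then $\gamma-\gamma'=\delta\in\Delta^+$ would contradict strong orthogonality). Consequently, $B{\cdot}\xi_{\eus C^u}$ is dense in $\ah^*$ if and only if $\eus C^u\cup\eus M^*_{\eus C^u}=\dah$, and this is the identity I aim to establish.

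To prove it, I would simply unfold the inductive definition of $\eus C^u$. The procedure terminates at step $n$, which is exactly the statement that $\dah\setminus\bigcup_{j=1}^{n-1}(\Gamma^*_j\cup\eus M^*_j)=\varnothing$, so
\[
  \dah=\bigcup_{j=1}^{n-1}(\Gamma^*_j\cup\eus M^*_j)=\eus C^u\cup\bigcup_{j=1}^{n-1}\eus M^*_j .
\]
Since each $\Gamma^*_j\subset\eus C^u$, one has $\eus M^*_j=(\Gamma^*_j-\Delta^+)\cap\dah\subset(\eus C^u-\Delta^+)\cap\dah=\eus M^*_{\eus C^u}$, and taking the union over $j$ yields the desired $\dah=\eus C^u\cup\eus M^*_{\eus C^u}$. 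Combined with the dimension formula from Proposition~\ref{prop:tangent-sp-s}, this forces $\dim\co^*_{\eus C^u}=\#\dah=\dim\ah^*$, so $\co^*_{\eus C^u}$ is the dense $B$-orbit.

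In effect, this argument is literally dual to the proof of Lemma~\ref{lm:dense-ah}: one replaces ``adding a positive root'' by ``subtracting a positive root'', and invokes Lemma~\ref{lm:simple-main}(ii) in place of (i) (the former being already built into Proposition~\ref{prop:tangent-sp-s} in the guise of the fact that $\#\eus M^*_\eus S$ is the correct codimension count). No serious obstacle is anticipated; the content of the lemma reduces to the tautological observation that the greedy ``peeling off maxima'' procedure defining $\eus C^u$ produces a strongly orthogonal set whose downward hull in $\dah$ exhausts $\dah$.
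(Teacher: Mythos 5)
Your proof is correct and takes essentially the same route as the paper's: both rest on the observation that termination of the inductive procedure forces $\dah=\eus C^u\cup\eus M^*_{\eus C^u}$, so the tangent space $\be{\cdot}\xi_{\eus C^u}$ is all of $\ah^*$. The paper draws this conclusion directly by invoking Lemma~\ref{lm:simple-main}(ii) to see the translates $\gamma_\mu-\mu$ are distinct, whereas you route it through the dimension count of Proposition~\ref{prop:tangent-sp-s}, which encapsulates the same fact.
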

\begin{proof} This is similar to the proof of Lemma~\ref{lm:dense-ah}. It follows from the construction 
of $\eus C^{u}$ that for any $\mu\in \dah\setminus \eus C^{u}$ there exists 
$\gamma_\mu\in\eus C^{u}$ such that $\gamma_\mu-\mu\in\Delta^+$. Furthermore, all the roots
$\gamma_\mu-\mu$ are different in view of Lemma~\ref{lm:simple-main}(ii). Therefore, 
$[\be,\xi_{\eus C^{u}}]=\ah^*$.
\end{proof}

\begin{rmk}   \label{rmk:cascade}
Our procedure of constructing the upper-canonical set in $\dah$ applies perfectly well to arbitrary subsets 
$I$ of $\Delta^+$. But the resulting `canonical' set $\eus C^u_I$ may not be strongly orthogonal. (For 
instance, because the sum of two roots in $\max(I)$ can be a root.) 
However, for $I=\Delta^+$, the procedure does provide a strongly orthogonal set, 
see \cite[Sect.\,2]{jos77}, \cite{ko12}. 
We call it {\it Kostant's cascade} (of strongly orthogonal roots) {\it in\/} $\Delta^+$ and set  
$\eus K=\eus C^u_{\Delta^+}$. If 
$\eus K=\{\gamma_1,\gamma_2,\dots,\gamma_t\}$ is {Kostant's cascade}, then 
$\xi_{\eus K}=\sum_i \xi_{-\gamma_i}\in\ut^*$ is a representative of the dense $B$-orbit in $\ut^*$.
\\ \indent
Furthermore, if $\ce$ is an {\sl ad}-nilpotent ideal of $\be$, then our construction shows 
that $\eus C^{u}_\ce=\eus K\cap \dce$. Hence $\eus C^{u}_\ce$ is strongly orthogonal for any $\ce$. Another good case, which we need below, is that of an arbitrary subset $I\subset \dah$. Here the 
upper-canonical set in $I$ is always strongly orthogonal, since the sum of two roots in $\dah$ is never a root.
\end{rmk}

\begin{ex}     \label{ex:sln-ah-s}
For $\g=\sln$, Kostant's cascade is 
$\eus K=\{\esi_1-\esi_n,\esi_2-\esi_{n-1},\dots,\esi_{[n/2]}-\esi_{n+1-[n/2]}\}$. It consists of the positive 
roots along the antidiagonal. We continue to consider the abelian ideal of shape $(3,3,1)$ in  
$\be(\mathfrak{sl}_n)$, $n\ge 6$, cf. Example~\ref{ex:sln-ah}.
Here $\eus C^{u}=\eus K\cap\dah=\{\esi_1-\esi_{n},\esi_2-\esi_{n-1}\}$ and it is depicted by the diagram
\ $\Yvcentermath1 \tiny
\young(\hfil\hfil\mybullet,\hfil\mybullet\hfil,::\hfil)$ . 
Comparing two canonical sets shows that it may 
happen that $\#\eus C^{l}\ne \#\eus C^{u}$. Furthermore, different abelian ideals may have the 
same upper-canonical set, whereas this is not the case for the lower-canonical sets. Indeed, 
$\eus C^{l}$ contains $\min(\dah)$ and any ideal is completely determined by its minimal elements.
\end{ex}
Our next goal is to describe the Pyasetskii duality for $\ah/B$ and $\ah^*/B$ in terms of $\sah$.
There are the bijections
\begin{gather*}
  \sah\stackrel{Thm.\,\ref{thm:B-orbits-ah}}{\longleftrightarrow}\ah/B
  \stackrel{Pyasetskii}{\longleftrightarrow}\ah^*/B \stackrel{Thm.\,\ref{thm:b-orb-dual}}{\longleftrightarrow}\sah , \\
    (\eus S\in\sah) \mapsto (\co_{\eus S}\in\ah/B) \mapsto 
    \bigl( (\co_{\eus S})^\vee=:\co^*_{\eus S^\vee}\in\ah^*/B\bigr) \mapsto (\eus S^\vee\in\sah) ,
\end{gather*}
and the question is: what is $\eus S^\vee$ in terms of $\eus S$?
We already know the answer in the two extreme cases:
\\
\textbullet \quad If $\eus S=\varnothing$, then $\co_\varnothing=\{0\}\in \ah$ and  
$(\co_{\varnothing})^\vee$ is the dense $B$-orbit in $\ah^*$. Hence $\varnothing^\vee=\eus C^{u}$;
\\
\textbullet \quad Likewise, for $\eus S=\eus C^{l}$ and the dense $B$-orbit in $\ah$, we get $(\eus C^{l})^\vee=\varnothing$.

To discuss the situation for an arbitrary $\eus S\in\sah$, we recall that
the tangent space $[\be,e_{\eus S}]\subset\ah$ is $T$-stable and
the corresponding set of roots is $\eus S\sqcup \eus M_{\eus S}$, where 
$\eus M_{\eus S}=(\eus S+\Delta^+)\cap \Delta^+$ is the set of roots of $[\ut,e_{\eus S}]$, see 
Proposition~\ref{prop:tangent-sp}. We set $J_\eus S=\dah\setminus (\eus S\sqcup \eus M_{\eus S})$.
The following preparatory assertion is required in the proof of Theorem~\ref{thm:Pyasetskii-general} below.
\begin{lm}   \label{lm:preservation}
Suppose that $\gamma^*\in \max(J_\eus S)$ and $\gamma^*-\delta\in J_\eus S$ for some 
$\delta\in\Delta^+$.
\par (1) \ If $\mu\in J_\eus S$ and  $\mu-\delta\in\dah$, then actually 
$\mu-\delta\in J_\eus S$;
\par (2) \ Moreover, if $\mu-2\delta\in\dah$, then both 
$\mu-\delta$ and $\mu-2\delta$ belong to $\in J_\eus S$.
\end{lm}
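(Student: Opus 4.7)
My plan is to prove part (1) by contradiction, with Lemma~\ref{lm:simple-main}(ii) as the key leverage point; part (2) will then follow from two applications of part (1). Suppose $\mu-\delta\notin J_\eus S$. If $\mu-\delta\in\eus S$, then $\mu=(\mu-\delta)+\delta\in\eus M_\eus S$, contradicting $\mu\in J_\eus S$. Otherwise $\mu-\delta=\gamma+\delta'$ with $\gamma\in\eus S$, $\delta'\in\Delta^+$, so $\mu=\gamma+\delta+\delta'$; and the same $\eus M_\eus S$-argument forces $\delta+\delta'\notin\Delta$.

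The next step is to locate $\mu$ relative to $\gamma^*$. We have $\gamma^*+\mu\notin\Delta$ by abelianness, and $\mu-\gamma^*\notin\Delta^+$ by the maximality of $\gamma^*$ in $J_\eus S$ (note that $\mu\ne\gamma^*$, since otherwise $\mu-\delta=\gamma^*-\delta\in J_\eus S$ directly). The two remaining possibilities are that $\gamma^*,\mu$ are strongly orthogonal or that $\beta:=\gamma^*-\mu\in\Delta^+$. The first is killed by Lemma~\ref{lm:simple-main}(ii) applied to $(\gamma^*,\mu)$, which would force $\mu-\delta\notin\dah$. So $\beta\in\Delta^+$, and the identity $\gamma^*-\delta=\gamma+(\delta'+\beta)$ combined with $\gamma^*-\delta\in J_\eus S\setminus \eus M_\eus S$ yields $\delta'+\beta\notin\Delta^+$, hence $\delta'+\beta\notin\Delta$ by positivity.

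Now a root-string argument along $\delta$ through $\mu-\delta=\gamma+\delta'$ (which extends up to $\mu$) gives $\langle\gamma+\delta',\delta^\vee\rangle\le-1$; since $\delta+\delta'\notin\Delta$ forces $\langle\delta',\delta^\vee\rangle\ge0$, we obtain $\langle\gamma,\delta^\vee\rangle\le-1$. Hence $\gamma+\delta\in\Delta^+$, and the ideal property of $\dah$ places $\gamma+\delta\in\dah$. The finishing blow is a second application of Lemma~\ref{lm:simple-main}(ii) to the pair $(\gamma^*,\gamma+\delta)$: these lie in $\dah$ and are strongly orthogonal, because $(\gamma+\delta)+\gamma^*\notin\Delta$ by abelianness and $(\gamma+\delta)-\gamma^*=-(\delta'+\beta)\notin\Delta$. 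From $\gamma^*-\delta\in\dah$ the lemma now forces $(\gamma+\delta)-\delta=\gamma\notin\dah$, contradicting $\gamma\in\eus S\subset\dah$.

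Part (2) is then mechanical: if $\mu-2\delta\in\dah$, the $\delta$-string through $\mu$ contains $\mu$ and $\mu-2\delta$, hence also $\mu-\delta$; a short height-comparison shows $\mu-\delta\in\Delta^+$, and then $\mu-\delta=(\mu-2\delta)+\delta$ lies in $\dah$ by the ideal property. Part (1) applied to $(\mu,\delta)$ and then to $(\mu-\delta,\delta)$ delivers both memberships. The main subtlety I expect is spotting the pair $(\gamma^*,\gamma+\delta)$ for the final Lemma~\ref{lm:simple-main}(ii) application: every intermediate computation (producing $\beta\in\Delta^+$, then $\delta'+\beta\notin\Delta$, then $\gamma+\delta\in\dah$) is engineered precisely to secure both $\dah$-membership and strong orthogonality of this pair.
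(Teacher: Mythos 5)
Your preparatory steps are sound---the decomposition $\mu-\delta=\gamma+\delta'$, locating $\beta:=\gamma^*-\mu\in\Delta^+$, and the facts $\delta+\delta'\notin\Delta$, $\delta'+\beta\notin\Delta$---and your closing idea of applying Lemma~\ref{lm:simple-main}(ii) to the pair $(\gamma^*,\gamma+\delta)$ is an attractive alternative to the paper's root-length analysis, \emph{provided} one can show $\gamma+\delta\in\Delta^+$. That is where the argument breaks. You assert that because the $\delta$-string through $\mu-\delta$ reaches $\mu$, one gets $\langle\gamma+\delta',\delta^\vee\rangle\le-1$. But the string may also extend downward: from $\mu-\delta,\mu\in\Delta$ alone one only learns $q\ge1$ in the string $(\mu-\delta)-p\delta,\dots,(\mu-\delta)+q\delta$, and $\langle\mu-\delta,\delta^\vee\rangle=p-q$ can equal $0$ when $p=q$ (e.g.\ $\mu-\delta=\esi_1$, $\delta=\esi_2$ in $\GR{B}{2}$). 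You would need $\mu-2\delta\notin\Delta$ to force $p=0$, and nothing in the hypotheses grants this---indeed part (2) is specifically about the case $\mu-2\delta\in\dah$.

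Moreover, the route is not repairable, because $(\gamma,\delta)<0$ simply does not follow from the hypotheses. As the paper's proof shows, $\gamma^*,\gamma^*-\delta,\mu$ all lie in $J_\eus S$, all dominate $\gamma\in\eus S$, and hence each is strongly orthogonal to $\gamma$; writing $\gamma^*=\gamma+\delta+\delta'+\beta$ and expanding the three orthogonality relations gives $(\gamma,\delta)=0$, $(\gamma,\beta)=0$ and $(\gamma,\delta')=-\|\gamma\|^2$. So $\gamma+\delta\in\Delta$ is not forced, and the contradiction must come from the root $\gamma+\delta'=\mu-\delta$ instead: the paper shows $(\gamma,\gamma+\delta')=0$, deduces that $\gamma$ is short, $\delta'$ long, $\gamma+\delta'$ short, and then that $2\gamma+\delta'$ is a root, violating abelianness of $\dah$. (Your part (2), which applies part (1) twice, is fine once part (1) is fixed.)
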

\begin{proof}
(1) \ Since $\gamma^*,\mu,\gamma^*-\delta$, and $\mu-\delta$ belong to $\dah$,  it follows from Lemma~\ref{lm:simple-main}(ii) that $\gamma^*$ and $\mu$ are not strongly orthogonal. Hence
$\gamma^*-\mu\in\Delta^+$, because $\gamma^*$ is a maximal element of $J_\eus S$. Assume that
$\mu-\delta\in \eus S\sqcup \eus M_{\eus S}$.

{\color{my_color}\textbullet} \ If $\mu-\delta=\gamma\in \eus S$, then $\mu=\gamma+\delta\in
\eus M_{\eus S}$. A contradiction!

{\color{my_color}\textbullet} \ If $\mu-\delta\in \eus M_{\eus S}$, then 
$\mu-\delta=\gamma+\nu$ for some $\gamma\in\eus S$ and $\nu\in\Delta^+$.  By the 
preceding argument, the  roots $\gamma^*, \gamma^*-\delta,\mu,\mu-\delta$ belong to the 
(abelian) ideal $\{\beta\in\Delta^+\mid \beta\curge \gamma\}\subset \dah$. Moreover, since
$\gamma^*, \gamma^*-\delta,\mu\in J_\eus S$, these three roots are orthogonal to $\gamma$.
Consequently, $(\gamma,\mu-\delta)=(\gamma,\gamma+\nu)=0$, too. But the last relation can 
only be satisfied if $\gamma$ is short, $\nu$ is long, and $\|\nu\|^2/\|\gamma\|^2=2$. (This already 
completes the proof in the {\bf ADE}-case!) In general, we note that $\gamma+\nu$ is also short. Thus, 
we have found two short roots such that their difference is a root. Since $\|\nu\|^2/\|\gamma\|^2=2$, the 
sum $\gamma+(\gamma+\nu)$ is also a root. But this contradicts the fact that $\dah$ is abelian.
\\ \indent
All these contradictions prove that $\mu-\delta\in J_\eus S$.

(2) If $\mu-2\delta\in\dah$, then $\mu-\delta\in\dah$ as well, and we conclude that 
$\mu-\delta\in J_\eus S$ in view of part (1). Note that in such a situation, $\mu$ and $\mu-2\delta$ are 
both long and $\delta$ is a short root.
Assume that $\mu-2\delta\in \eus S\sqcup \eus M_{\eus S}$.

{\color{my_color}\textbullet} \ If $\mu-2\delta=\gamma\in \eus S$, then $\mu-\delta=\gamma+\delta\in
\eus M_{\eus S}$. A contradiction!

{\color{my_color}\textbullet} \ If $\mu-2\delta\in \eus M_{\eus S}$, then 
$\mu-2\delta=\gamma+\nu$ for some $\gamma\in\eus S$ and $\nu\in\Delta^+$.
Arguing as above, we obtain that $(\gamma,\gamma+\nu)=0$ and hence $\gamma+\nu$ is short.
On the other hand, we already noticed that $\mu-2\delta$ is long.
\\ \indent
All these contradictions prove that $\mu-2\delta\in J_\eus S$.
\end{proof}
\noindent 
Recall that our construction of the upper-canonical set in $\dah$
applies to {\bf any} subset of $\dah$ and yields an element of $\sah$.

\begin{thm}   \label{thm:Pyasetskii-general}
For any $\eus S\in\sah$ and $\co_\eus S\in\ah/B$, the orbit $(\co_{\eus S})^\vee\in \ah^*/B$ is determined by the
upper-canonical set in $J_\eus S=\dah\setminus (\eus S\sqcup \eus M_{\eus S})$. That is,
$\eus S^\vee$ is the upper-canonical set in $J_\eus S$.
\end{thm}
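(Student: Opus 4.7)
By the Pyasetskii correspondence (Subsection~\ref{subs:FO}), $(\co_\eus S)^\vee$ is the unique dense $B$-orbit in $\overline{B{\cdot}(\be{\cdot}e_\eus S)^\perp}$, where the annihilator is computed inside $\ah^*$. I first identify this annihilator: by Proposition~\ref{prop:tangent-sp}, $\be{\cdot}e_\eus S\subset\ah$ is $T$-stable with set of weights $\eus S\sqcup\eus M_\eus S$, and since the $T$-pairing between $\ah$ and $\ah^*$ identifies $\xi_{-\gamma}$ as dual to $e_\gamma$, one obtains $(\be{\cdot}e_\eus S)^\perp=\bigoplus_{\gamma\in J_\eus S}\bbk\xi_{-\gamma}=:V_{J_\eus S}$. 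Thus the task reduces to locating the dense $B$-orbit in the closed irreducible subvariety $\overline{B{\cdot}V_{J_\eus S}}\subset\ah^*$.

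Set $\eus S':=\eus C^u_{J_\eus S}$, the upper-canonical set produced by the procedure preceding Lemma~\ref{lm:dense-ah-d} applied to the subset $J_\eus S\subset\dah$. By Remark~\ref{rmk:cascade}, $\eus S'\in\sah$, and since $\eus S'\subset J_\eus S$ the canonical representative $\xi_{\eus S'}$ already lies in $V_{J_\eus S}$, so $\co^*_{\eus S'}\subset\overline{B{\cdot}V_{J_\eus S}}$. It then suffices to establish the reverse inclusion $\overline{B{\cdot}V_{J_\eus S}}\subset\overline{\co^*_{\eus S'}}$: the two resulting closed irreducible subsets of $\ah^*$ coincide, and their common dense $B$-orbit equals both $\co^*_{\eus S'}$ and $(\co_\eus S)^\vee$. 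Because $V_{J_\eus S}$ is irreducible, this reduces further to exhibiting a Zariski-dense subset of $V_{J_\eus S}$ contained in $\co^*_{\eus S'}$.

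To this end I fix $\eta=\sum_{\gamma\in J_\eus S}c_\gamma\xi_{-\gamma}\in V_{J_\eus S}$ with all $c_\gamma\ne 0$ and run the reduction of Theorem~\ref{thm:b-orb-dual}(a). At each stage it selects $\Gamma^*_i=\max(\text{supp}(\eta^{(i)}))$ and kills the weights in $\eus M^*_{\Gamma^*_i}$ by means of root subgroups $U_\delta$ with $\gamma^*-\delta\in\dah$ for some $\gamma^*\in\Gamma^*_i$. Here Lemma~\ref{lm:preservation} is the essential input: whenever $\gamma^*-\delta\in J_\eus S$ (which persists by induction as long as the current support sits inside $J_\eus S$), parts~(1)--(2) of the lemma force every new weight $\mu-\delta$ or $\mu-2\delta$ arising from $\exp(te_\delta){\cdot}\xi_{-\mu}$ to remain in $J_\eus S$. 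Outside $\GR{G}{2}$ these are the only possible new weights, so the whole procedure takes place inside $V_{J_\eus S}$; in particular, since $\eta$ is generic, the sequence $\Gamma^*_1,\Gamma^*_2,\ldots$ produced by the reduction coincides with the one yielded by the upper-canonical construction applied to $J_\eus S$, and the terminal support is exactly $\eus S'$. A final $T$-rescaling brings us to $\xi_{\eus S'}\in B{\cdot}\eta$, so $\eta\in\co^*_{\eus S'}$. The $\GR{G}{2}$-case, where the cubic term of $\exp(te_\delta)$ could contribute in principle, is harmless because the unique abelian ideal is described explicitly (Example~\ref{ex:G2}) and the triple weight differences $\mu-3\delta$ never lie in $\dah$; it can equally well be verified by direct inspection.

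The main obstacle is precisely this support-preservation step. Without Lemma~\ref{lm:preservation} a single application of $U_\delta$ could drift the current support into weights inside $\eus S\sqcup\eus M_\eus S=\dah\setminus J_\eus S$, after which the subsequent choice of $\Gamma^*_i$ would no longer mirror the upper-canonical construction in $J_\eus S$ and the proof would collapse. The quadratic clause of the lemma is needed specifically to control the second-order term of $\exp(te_\delta)$ (present in types $B,C,F$); the only remaining higher-order contribution survives solely in $\GR{G}{2}$, where the abelian ideals are small enough for direct verification.
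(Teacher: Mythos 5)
Your proposal is correct and follows essentially the same route as the paper: identify the annihilator $(\be{\cdot}e_\eus S)^\perp$ with $V_{J_\eus S}$ via Proposition~\ref{prop:tangent-sp}, run the reduction of Theorem~\ref{thm:b-orb-dual}(a) on a generic $\eta\in V_{J_\eus S}$, and invoke Lemma~\ref{lm:preservation} (together with its iterated analogue) to show the support never escapes $J_\eus S$, so a generic $\eta$ lands in $\co^*_{\eus S^\vee}$ with $\eus S^\vee=\eus C^u_{J_\eus S}$. You have also correctly isolated Lemma~\ref{lm:preservation} as the crux and noted, as the paper does, that only $\GR{G}{2}$ escapes the $k\le2$ bound and is harmless.
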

\begin{proof}
By definition, the Pyasetskii dual orbit for $\co_{\eus S}$ is the dense $B$-orbit in 
$B{\cdot}([\be,e_{\eus S}]^\perp)$. The weights of $V_\eus S:=[\be,e_{\eus S}]^\perp\subset \ah^*$ are 
exactly the negative roots \ $-(\dah\setminus (\eus S\sqcup \eus M_{\eus S}))=-J_\eus S$. 
Let $\eus S^*$ be the upper-canonical set in $J_\eus S$. Then $\eus S^*\in\sah$ and 
$\xi_{\eus S^*}\in V_\eus S$. We claim that $\xi_{\eus S^*}$ belongs to the dense $B$-orbit in 
$B{\cdot}V_\eus S$ and thereby $\eus S^*=\eus S^\vee$. To this end, we show that the reduction 
procedure for elements of $\ah^*$ explained in the proof of Theorem~\ref{thm:b-orb-dual} works also 
for the subspaces of $\ah^*$ of the form $V_\eus S$.

Let $\eta=\sum_{\gamma\in J_\eus S}c_\gamma\xi_{-\gamma}$ be a generic point of $V_\eus S$.
We may assume that $\text{supp}(\eta)=J_\eus S$.
Imitating the general reduction procedure, we set $\Gamma_1^*=\max(J_\eus S)$ and $\eus M^*_1=
(\Gamma_1^*-\Delta^+)\cap J_\eus S$ and write accordingly
\[
   \eta=\sum_{\gamma\in \Gamma_1^*}c_\gamma\xi_{-\gamma}+
   \sum_{\nu\in \eus M^*_1}c_\nu\xi_{-\nu}+\tilde\eta ,
\]
where $\tilde\eta\in\ah^*$ represents the sum related to the roots in 
$J_\eus S\setminus (\Gamma_1^*\sqcup \eus M^*_1)$. Using the disjoint subsets
$\Delta^{(-)}_{\gamma, J_\eus S}=\{\delta\in \Delta^+\mid \gamma-\delta\in J_\eus S\}$, $\gamma\in \Gamma_1^*$, and the corresponding root subgroups of $U$,
we can consecutively kill all the summands in the $\eus M_1^*$-group, without changing the first
group. That is, there is $u\in U$ such that 
\beq    \label{eq:first-step}
    u{\cdot}\eta=\sum_{\gamma\in \Gamma_1^*}c_\gamma\xi_{-\gamma}+\eta' .
\eeq
The problem is that, {\sl a priori}, it might have happened that $\eta'$ does not belong to $V_\eus S$,
that is, $\eta'$ might contain a summand corresponding to a root outside $J_\eus S$. Fortunately,
Lemma~\ref{lm:preservation} guarantee us that this cannot occur. Indeed, if $\tilde\gamma\in\Gamma_1^*$ and $\tilde\gamma-\delta\in J_\eus S$, then using a suitable $\tilde u\in U_\delta$ we may kill
the summand with $\xi_{-\tilde\gamma+\delta}$.  (Note that $\tilde\gamma-\delta\in \eus M^*_1$.)
Suppose that $U_\delta$ also affects a weight vector $\xi_{-\mu}\in V_\eus S$. Then 
$\mu\not\in \Gamma^*_1$ and 
\[
  \tilde u{\cdot}\xi_{-\mu}=\xi_{-\mu}+l_1\xi_{-\mu+\delta}+\dots + l_k\xi_{-\mu+k\delta} ,
\]
where $l_1,\dots,l_k\in\bbk$ and  $\mu-\delta, \dots, \mu-k\delta\in \dah$. Note that $k=1$ in the 
{\bf ADE}-case and $k\le 2$ in the {\bf BCF}-case. (We skip the obvious case of $\GR{G}{2}$, see
Example~\ref{ex:G2}.) 
By Lemma~\ref{lm:preservation}, we have $\mu-\delta, \mu-2\delta\in J_\eus S$.
Hence $\tilde u{\cdot}\xi_{-\mu}\in V_\eus S$ and, in fact, $\tilde u{\cdot}\eta\in V_\eus S$. Iterating 
these elementary simplifications, we conclude that the first reduction step yields a vector 
$u{\cdot}\eta$ in Eq.~\eqref{eq:first-step} such that $\eta'\in V_\eus S$ and $\text{supp}(\eta')\subset
J_\eus S\setminus (\Gamma_1^*\sqcup \eus M^*_1)$.

For generic $\eta$, one may further assume that 
$\text{supp}(\eta')=J_\eus S\setminus (\Gamma_1^*\sqcup \eus M^*_1)$. We then continue our 
reduction procedure with $J_\eus S\setminus (\Gamma_1^*\sqcup \eus M^*_1)$ in place of $J_\eus S$. 
One readily sees that an analogue of Lemma~\ref{lm:preservation} holds for this smaller set of roots. 
Therefore, we stay within $V_\eus S$ during all the subsequent reduction steps. Finally, we obtain that the generic $B$-orbit meeting $V_\eus S$ contains $\xi_{\eus S^*}$, and hence $\eus S^*=\eus S^\vee$.
\end{proof}

\begin{ex}     \label{ex:sln-duality}
In our running example with $\ah$ of shape $(3,3,1)$, take $\eus S=\{\esi_1-\esi_{n-2},\esi_2-\esi_{n}\}$.
Then $\eus S\sqcup \eus M_{\eus S}$ is represented by the picture \ 
$\Yvcentermath1  \tiny\young(\mybullet\ast\ast,\hfil\hfil\mybullet,::\hfil)$ ; and therefore $\eus S^\vee$ is
depicted by the diagram \ 
$\Yvcentermath1  \tiny\young(\hfil\hfil\hfill,\hfil\mybullet\hfil,::\mybullet)$ , i.e., $\eus S^\vee=
\{\esi_2-\esi_{n-1},\esi_3-\esi_{n}\}$.
\end{ex}

\section{Algebras of $U$-invariants and a dimension estimate}
\label{sect:U-inv}

\noindent
In this section, we determine the structure of the invariant algebras $\bbk[\ah]^U$ and $\bbk[\ah^*]^U$. 
Since $U=(B,B)$, these are also the algebras of $B$-semi-invariants.
The assertion that these two algebras are polynomial readily follows 
from \cite{sk77} and the fact that $B$ has dense orbits in $\ah$ and $\ah^*$, respectively.
But in order to determine their Krull dimensions, we invoke our description of 
canonical representatives in the dense $B$-orbits via $\eus C^{l}$ and $\eus C^{u}$, respectively. More
generally, the similar result is valid for the algebras $\bbk[\ce^*]^U$ in place of $\bbk[\ah^*]^U$, i.e.,
for arbitrary {\sl ad}-nilpotent ideals $\ce$.

\begin{lm}
\label{lm:dense-solvable-orb}
Let $\tilde B\to GL(V)$ be a representation of a connected solvable algebraic group $\tilde B$. If $V$ has 
a dense $\tilde B$-orbit, then $\bbk[V]^{(\tilde B,\tilde B)}$ is a polynomial algebra (i.e., $V\md (\tilde B,\tilde B)$ is an affine space) and $\dim V\md (\tilde B,\tilde B)$ equals the number of divisors
in the complement of the dense $\tilde B$-orbit.
\end{lm}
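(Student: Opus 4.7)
The strategy is to build explicit generators of $\bbk[V]^U$ from the geometry of $V\setminus\tilde B{\cdot}v_0$, and then to extract algebraic independence from the fact that a dense orbit kills rational invariants.

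\textbf{Setup and candidate generators.} Denote the dense orbit $\Omega=\tilde B{\cdot}v_0$, and let $D_1,\dots,D_r$ be the codimension-one irreducible components of the proper closed $\tilde B$-stable subset $V\setminus\Omega$. Because $V$ is a vector space, $\bbk[V]$ is a UFD, so each $D_i$ is cut out by an irreducible polynomial $f_i\in\bbk[V]$, unique up to scalar. Since $D_i$ is $\tilde B$-stable and $\tilde B$ is connected, the line $\bbk f_i$ is $\tilde B$-stable; thus $f_i$ is a $\tilde B$-semi-invariant of some weight $\chi_i\in X^*(\tilde B)$, and in particular $f_i\in\bbk[V]^{U}$, where $U=(\tilde B,\tilde B)$.

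\textbf{Generation.} Let $f\in\bbk[V]^U$ be any nonzero $\tilde B$-semi-invariant. Its zero scheme is a $\tilde B$-stable hypersurface, which cannot meet $\Omega$ (otherwise it would equal $\Omega$, hence all of $V$). Thus $\mathrm{supp}\bigl(\mathrm{div}(f)\bigr)\subset V\setminus\Omega$, and only codimension-one components contribute to the principal divisor. Unique factorisation in $\bbk[V]$ then yields $f=c\prod_{i=1}^r f_i^{n_i}$ with $c\in\bbk^\times$ and $n_i\in\BZ_{\ge 0}$, whence $\bbk[V]^U=\bbk[f_1,\dots,f_r]$.

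\textbf{Algebraic independence and dimension.} The density of $\Omega$ gives $\bbk(V)^{\tilde B}=\bbk$, so any two $\tilde B$-semi-invariants of the same weight are proportional; in particular the $\chi_i$ are pairwise distinct. A nontrivial $\BZ$-relation $\sum n_i\chi_i=0$ would split into positive and negative parts producing two distinct monomials in the $f_i$ of equal weight, hence proportional---impossible by unique factorisation. So the $\chi_i$ are $\BZ$-linearly independent, distinct monomials in $f_1,\dots,f_r$ carry distinct $T$-weights, and any polynomial relation among the $f_i$ must vanish weight-by-weight, forcing it to be trivial. Therefore $\bbk[V]^U\cong\bbk[x_1,\dots,x_r]$ and $\dim V\md U=r$.

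\textbf{Main obstacle.} The delicate point is the interplay between the UFD property of $\bbk[V]$ and density of $\Omega$ in the generation step: one must check that every semi-invariant actually factors as a monomial in the $f_i$, and for this it is essential that only codimension-one components of $V\setminus\Omega$ enter $\mathrm{div}(f)$. Once this is secured, polynomiality and the dimension count are essentially formal.
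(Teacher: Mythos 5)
Your argument is a self-contained reconstruction of the Sato--Kimura result that the paper merely cites (\cite[\S\,4, Prop.\,5]{sk77}): the ingredients you use --- unique factorisation in $\bbk[V]$, semi-invariance of the $f_i$ drawn from $\tilde B$-stability of the divisors $D_i$, constancy of $\tilde B$-invariant rational functions, and multiplicative independence of the characters $\chi_i$ --- are precisely theirs. Where the paper buys brevity by citation, you buy transparency; the trade is reasonable.

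There is, however, a real gap in the ``Generation'' step. You prove that every nonzero $\tilde B$-\emph{semi-invariant} lying in $\bbk[V]^U$ is a scalar multiple of a monomial in $f_1,\dots,f_r$, and then pass immediately to $\bbk[V]^U=\bbk[f_1,\dots,f_r]$. That passage needs the extra fact that $\bbk[V]^U$ is \emph{spanned} by $\tilde B$-semi-invariants, equivalently that the connected abelian group $\tilde B/U$ acts on $\bbk[V]^U$ diagonalisably. This holds when $\tilde B/U$ is a torus, for then $\bbk[V]^U=\bigoplus_\chi\bbk[V]^U_\chi$ is a sum of semi-invariants; in particular it holds in the only situation the paper uses, namely $\tilde B=B$ a Borel of a semisimple group with $B/U=T$. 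But for a general connected solvable $\tilde B$ the quotient $\tilde B/U$ can contain a nontrivial vector group acting non-semisimply, and then the conclusion genuinely fails: take $\tilde B=\mathbb G_a\times\mathbb G_m$ acting on $V=\bbk^2$ by $(t,s)\cdot(x,y)=(sx,\,sy+stx)$. The orbit of $(1,0)$ is $\{x\ne 0\}$, so the complement $\{x=0\}$ is a single divisor; yet $(\tilde B,\tilde B)=\{e\}$, so $\bbk[V]^{(\tilde B,\tilde B)}=\bbk[x,y]$ has Krull dimension $2$, not $1$. Thus both the lemma as stated and your proof implicitly assume $U=R_u(\tilde B)$ (equivalently, $\tilde B/U$ a torus); with that hypothesis spelled out, your argument --- including the algebraic-independence step via multiplicative independence of the $\chi_i$ --- is correct.
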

\begin{proof}
This is  a particular case of a general result of Sato and Kimura on prehomogeneous
vector spaces, see \cite[\S\,4, Prop.\,5]{sk77}.
\end{proof}

\begin{lm}   \label{lm:dense-ce-star}
If\/ $\ce\subset\ut$ is an arbitrary {\sl ad}-nilpotent ideal of\/ $\be$, then $B$ has a dense orbit in $\ce^*$. 
\end{lm}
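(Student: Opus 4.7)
The plan is to reduce the statement to the well-known case $\ce=\ut$. By Remark~\ref{rmk:cascade}, Kostant's cascade $\eus K\subset\Delta^+$ is strongly orthogonal and $\xi_{\eus K}=\sum_{\gamma\in\eus K}\xi_{-\gamma}\in\ut^*$ represents the dense $B$-orbit in $\ut^*$. So it is enough to push this dense orbit along a suitable surjection onto $\ce^*$.

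Since $\ce$ is a $B$-stable subspace of $\ut$, the dual map $p:\ut^*\to\ce^*$ is a $B$-equivariant linear surjection. With the compatible choice of weight bases for $\ut^*$ and $\ce^*$ recorded at the beginning of Section~\ref{sect:B-ah-s}, $p$ sends $\xi_{-\gamma}$ to itself whenever $\gamma\in\dce$ and to $0$ otherwise. Combined with the identity $\eus C^{u}_\ce=\eus K\cap\dce$ noted in Remark~\ref{rmk:cascade}, this yields $p(\xi_{\eus K})=\xi_{\eus C^{u}_\ce}$.

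The concluding step is the formal observation that a continuous surjection carries a dense orbit to a dense set: since $\overline{B{\cdot}\xi_{\eus K}}=\ut^*$ and $p$ is continuous and surjective, one has $\ce^*=p(\ut^*)=p(\overline{B{\cdot}\xi_{\eus K}})\subseteq\overline{p(B{\cdot}\xi_{\eus K})}=\overline{B{\cdot}\xi_{\eus C^{u}_\ce}}$, so $B{\cdot}\xi_{\eus C^{u}_\ce}$ is the desired dense $B$-orbit in $\ce^*$. I anticipate no serious obstacle: the only content beyond formalities is the identification $p(\xi_{\eus K})=\xi_{\eus C^{u}_\ce}$, which is immediate from how $\eus C^{u}_\ce$ was constructed, and the classical density of $B{\cdot}\xi_{\eus K}$ in $\ut^*$ imported via Remark~\ref{rmk:cascade}. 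A direct alternative would be to imitate the proof of Lemma~\ref{lm:dense-ah-d} by checking that $[\be,\xi_{\eus C^{u}_\ce}]=\ce^*$, but this would require reproving a variant of Lemma~\ref{lm:simple-main}(ii) in the ad-nilpotent setting, where the abelian hypothesis was used; the reduction above avoids this issue entirely.
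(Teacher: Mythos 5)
Your proof is correct and follows essentially the same approach as the paper: both push the dense $B$-orbit in $\ut^*$ forward along the $B$-equivariant surjection $\ut^*\to\ce^*$. The only cosmetic difference is at the final step — the paper passes to tangent spaces ($\be{\cdot}\eta=\ut^*$ forces $\be{\cdot}\bar\eta=\ce^*$), whereas you use the purely topological fact that a continuous surjection maps a dense subset to a dense subset; you also make the image $p(\xi_{\eus K})=\xi_{\eus C^u_\ce}$ explicit, which is a nice but inessential extra.
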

\begin{proof}
It is well known that $B$ has a dense orbit in $\ut^*$, see Remark~\ref{rmk:cascade}. 
If $\eta\in\ut^*$ is a representative of the dense $B$-orbit and $\ce\subset \ut$ is $B$-stable, then
$\bar\eta=\eta\vert_{\ce}\ne 0$. Since $\be{\cdot}\eta=\ut^*$ and the surjection $\ut^*\to\ce^*$ is
$B$-equivariant, we obtain $\be{\cdot}\bar\eta=\ce^*$, i.e., 
the $B$-orbit of $\bar\eta$ is dense in $\ce^*$.
\end{proof}

\begin{thm}   \label{thm-U-inv-ah}
For any abelian ideal $\ah$, we have $\ah\md U\simeq \mathbb A^p$, where
$p=\#(\eus C^{l}_\ah)$. 
\end{thm}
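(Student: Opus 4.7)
The plan is to apply the Sato--Kimura criterion (Lemma~\ref{lm:dense-solvable-orb}) and then evaluate the Krull dimension of $\bbk[\ah]^U$ through the rank of the lattice of $T$-weights realised by $B$-semi-invariants on $\ah$.

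By Lemma~\ref{lm:dense-ah}, the vector $v:=e_{\eus C^{l}}$ lies in the dense $B$-orbit of $\ah$, so Lemma~\ref{lm:dense-solvable-orb} gives that $\bbk[\ah]^U$ is polynomial. Write $\bbk[\ah]^U=\bbk[f_1,\dots,f_r]$, with each $f_i$ a $T$-semi-invariant of character $\chi_i\in X^*(T)$; the task reduces to showing $r=p=\#\eus C^{l}$.

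Next I would analyse the stabiliser $B^v$. Writing $b=tu$ with $t\in T$, $u\in U$, the equation $tu{\cdot}v=v$ forces $u\in U^v$ and $\gamma(t)=1$ for every $\gamma\in\eus C^{l}$: indeed, $u{\cdot}v-v$ has only $T$-weights strictly above those of $v=\sum_{\gamma\in\eus C^{l}}e_\gamma$, while $t^{-1}{\cdot}v$ preserves weight spaces, so the $\eus C^{l}$-components and the higher ones must match separately. Hence the image of $B^v$ under the projection $B\to T$ equals $T^v:=\bigcap_{\gamma\in\eus C^{l}}\ker\gamma$. By the standard mechanism for prehomogeneous spaces, the lattice $\Lambda:=\sum_i\mathbb{Z}\chi_i$ coincides with $\{\chi\in X^*(T):\chi|_{T^v}=1\}$: one direction follows at once from $f_i(b{\cdot}v)=\chi_i(b)f_i(v)$ with $f_i(v)\ne 0$; for the other, any character of $T/T^v$ defines a $B$-semi-invariant regular function on the open orbit $B{\cdot}v$, which extends to a rational semi-invariant on all of $\ah$. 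Since the roots in $\eus C^{l}$ are strongly orthogonal, hence linearly independent in $\te^*$, the subtorus $T^v$ has codimension $p$ in $T$, so $\mathrm{rank}(\Lambda)=p$.

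Finally I would verify that $\chi_1,\dots,\chi_r$ are $\mathbb{Z}$-linearly independent in $X^*(T)$, yielding $r=\mathrm{rank}(\Lambda)=p$. Any relation $\sum n_i\chi_i=0$ makes $\prod f_i^{n_i}\in\bbk(\ah)$ a $B$-invariant rational function; density of $B{\cdot}v$ gives $\bbk(\ah)^B=\bbk$, so this product is a nonzero constant, and algebraic independence of the $f_i$'s then forces all $n_i=0$. The main obstacle will be the extension step in the previous paragraph, namely showing that every character of $T$ trivial on $T^v$ really lifts to a rational $B$-semi-invariant on all of $\ah$. This is standard for prehomogeneous vector spaces, but deserves careful verification using irreducibility of $\ah$ and openness of $B{\cdot}v$.
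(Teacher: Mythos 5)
Your proof reaches the correct conclusion, but by a genuinely different route from the paper's. The paper stays at the level of orbit geometry: by Theorem~\ref{thm:B-orbits-ah}, the dense orbit $\co_{\eus C^l}$ meets the coordinate subspace $V=\bigoplus_{\gamma\in\eus C^l}\bbk e_{\gamma}$ in the open torus $\{\sum a_\gamma e_\gamma\mid a_\gamma\ne 0\}$, so $\co_{\eus C^l}$ decomposes into a $p$-parameter family of isomorphic $U$-orbits; by \cite{brion} this gives $\trdeg\bbk(\ah)^U=p$, and factoriality of $\ah$ plus the absence of characters of $U$ gives $\bbk(\ah)^U=\mathrm{Frac}\,\bbk[\ah]^U$. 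You instead identify the Krull dimension with the rank of the lattice $\Lambda$ of $T$-characters realised by $B$-semi-invariants, computing that rank via the stabiliser $B^v$. Both are standard mechanisms for prehomogeneous spaces; the paper's argument is shorter given what is already set up, while yours makes the character lattice explicit.

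One step is imprecise as you state it. You claim that the image of $B^v$ in $T$ equals $T^v=\bigcap_{\gamma\in\eus C^l}\ker\gamma$, arguing that in $u\cdot v=t^{-1}\cdot v$ the ``$\eus C^l$-components and the higher ones must match separately.'' That dichotomy is not well posed: the lower-canonical set $\eus C^l$ can contain $\curle$-comparable roots, so a weight of $u\cdot v-v$ that is strictly above some $\gamma\in\eus C^l$ may again lie in $\eus C^l$. Already in Example~\ref{ex:sln-ah}, $\eus C^l=\Gamma_1\cup\Gamma_2$ with $\Gamma_1=\{\esi_2-\esi_{n-2},\esi_3-\esi_n\}$ and $\Gamma_2=\{\esi_1-\esi_{n-1}\}$, and $\esi_1-\esi_{n-1}\succ\esi_2-\esi_{n-2}$; the $(\esi_1-\esi_{n-1})$-component of $u\cdot v$ receives contributions from $e_{\esi_2-\esi_{n-2}}$ through $[\ut,[\ut,e_{\esi_2-\esi_{n-2}}]]$. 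So the equality of images cannot be read off component by component as written. (You flag the extension step as the main obstacle, but this earlier point is in fact the more delicate one.)

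Fortunately, all your lattice argument actually requires is $\mathrm{rank}\,\Lambda=p$, and that part is robust. By Proposition~\ref{prop:tangent-sp}, the weight set $\eus C^l$ of $[\te,e_{\eus C^l}]$ is disjoint from the weight set $\eus M_{\eus C^l}=(\eus C^l+\Delta^+)\cap\Delta^+$ of $[\ut,e_{\eus C^l}]$ (strong orthogonality of $\eus C^l$ rules out $\gamma=\gamma'+\delta$ with $\gamma,\gamma'\in\eus C^l$). Hence $\be^v=\te^v\oplus\ut^v$, so the image of $B^v$ in $T$ has the same identity component as $T^v$, whose codimension is $p$ by linear independence of $\eus C^l$. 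Therefore $\{\chi\in X^*(T)\mid \chi|_{\pi(B^v)}=1\}$ still has rank $p$, and the rest of your argument (the extension of semi-invariants from the open orbit, and the $\BZ$-linear independence of $\chi_1,\dots,\chi_r$ via $\bbk(\ah)^B=\bbk$ and algebraic independence of the $f_i$) goes through unchanged.
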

\begin{proof}
Since $\ah$ contains a dense $B$-orbit, $\bbk[\ah]^U$ is 
polynomial in view of  Lemma~\ref{lm:dense-solvable-orb}. If 
$\eus C^{l}=\eus C^{l}_\ah=\{\gamma_1,\dots,\gamma_p\}$ and 
$V=\bigoplus_{i=1}^p \bbk e_{\gamma_i}$, then it follows from Theorem~\ref{thm:B-orbits-ah} that
$ \co_{ \eus C^{l}}\cap V=\{\sum_{i=1}^p a_i e_{\gamma_i}\mid a_1\cdots a_p\ne 0\}$.
Furthermore, different elements of this intersection belong to different $U$-orbits and all these $U$-orbits 
are isomorphic. That is, the dense $B$-orbit splits into an $p$-parameter family of isomorphic $U$-orbits, 
which implies that generic $U$-orbits in $\ah$ are of codimension $p$~\cite[Ch.\,1, n.\,2, Prop.\,2]{brion}.
Therefore,  $\trdeg \bbk(\ah)^U=p$~\cite[Ch.\,1, n.\,6]{brion}. Finally, since $U$ has no non-trivial characters and $\ah$ is factorial, $\bbk(\ah)^U$ is the quotient field of $\bbk[\ah]^U$.
\end{proof}

\begin{thm}    \label{thm-U-inv-ahs}
For any {\sl ad}-nilpotent ideal $\ce\subset\ut$, we have $\ce^*\md U\simeq \mathbb A^m$, where
$m=\#(\eus C^{u}_\ce)$. 
\end{thm}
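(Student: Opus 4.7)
My plan is to mirror the proof of Theorem~\ref{thm-U-inv-ah} step by step, with the role of the canonical representative of the dense $B$-orbit now played by $\xi_{\eus{C}^{u}_\ce}$. First I will invoke Lemma~\ref{lm:dense-ce-star}, which asserts that $\ce^*$ contains a dense $B$-orbit; then Lemma~\ref{lm:dense-solvable-orb} (Sato--Kimura) immediately yields that $\bbk[\ce^*]^U$ is a polynomial algebra. It remains only to compute its Krull dimension, which, by factoriality of $\ce^*$, the absence of non-trivial characters of $U$, and \cite[Ch.\,1, n.\,6]{brion}, equals the codimension of a generic $U$-orbit in $\ce^*$.

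To identify a convenient point, I write $\eus{C}^{u}_\ce = \{\gamma_1,\dots,\gamma_m\}$ and set $\xi = \xi_{\eus{C}^u_\ce} = \sum_{i=1}^m \xi_{-\gamma_i}$. By Remark~\ref{rmk:cascade}, $\eus{C}^u_\ce = \eus{K}\cap\dce$, so $\xi$ is the image of Kostant's dense-orbit representative $\xi_\eus{K}\in\ut^*$ under the $B$-equivariant surjection $\ut^*\twoheadrightarrow\ce^*$ (cf. the proof of Lemma~\ref{lm:dense-ce-star}); hence $\xi$ lies in the dense $B$-orbit of $\ce^*$, so $\be\cdot\xi = \ce^*$.

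The heart of the argument will be to verify that the tangent-space decomposition $\be\cdot\xi = \te\cdot\xi \oplus \ut\cdot\xi$ is actually a direct sum. On one hand, $\te\cdot\xi = \bigoplus_{i=1}^m \bbk\xi_{-\gamma_i}$ is $m$-dimensional, because the strongly orthogonal set $\eus{C}^u_\ce$ is linearly independent. On the other hand, $\ut\cdot\xi$ is spanned by vectors $e_\alpha\cdot\xi_{-\gamma_i}$, $\alpha\in\Delta^+$, with $\gamma_i-\alpha\in\dce$; each such vector has $T$-weight $-(\gamma_i-\alpha)$, and for it to lie in $\te\cdot\xi$ one would need $\gamma_i-\alpha = \gamma_j$ for some $j$. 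For $i\ne j$ this contradicts strong orthogonality of $\eus{C}^u_\ce$, and for $i = j$ it contradicts $\alpha\in\Delta^+$. Therefore $\ut\cdot\xi \cap \te\cdot\xi = 0$, and $\dim U\cdot\xi = \dim\ut\cdot\xi = \dim\ce^* - m$.

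Finally, because $U$ is normal in $B$, every point of $B\cdot\xi$ has the same $U$-orbit dimension as $\xi$, and so the dense $B$-orbit forces generic $U$-orbits in $\ce^*$ to have codimension exactly $m$. This gives $\ce^*\md U\simeq\mathbb{A}^m$. I expect the main obstacle to be precisely the direct-sum decomposition of $\be\cdot\xi$: in the abelian case of Theorem~\ref{thm-U-inv-ah} the $B$-orbit classification (Theorem~\ref{thm:b-orb-dual}) made such dimensional splittings almost automatic, but here $\ce$ is only \emph{ad}-nilpotent, so the needed transversality must be extracted directly from the strong orthogonality of Kostant's cascade.
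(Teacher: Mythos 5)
Your proposal is correct, and it uses the same ingredients (Lemmas~\ref{lm:dense-solvable-orb} and \ref{lm:dense-ce-star}, the identity $\eus C^u_\ce=\eus K\cap\dce$, factoriality, $U$ having no characters, and the results cited from \cite{brion}), but the step where you establish that generic $U$-orbits have codimension $m$ is done by a somewhat different computation than the paper's. The paper mirrors Theorem~\ref{thm-U-inv-ah}: it exhibits the $m$-parameter family $\{\sum a_i\xi_{-\gamma_i}\mid a_i\neq 0\}$ inside $\co^*$, observes that different members lie in different (isomorphic) $U$-orbits, and cites \cite[Ch.\,1, n.\,2, Prop.\,2]{brion} to conclude the generic $U$-orbit has codimension $m$. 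You instead directly compute $\dim U\cdot\xi$ by showing that $\be\cdot\xi=\te\cdot\xi\oplus\ut\cdot\xi$ is a genuine direct sum, reading off $\dim\ut\cdot\xi=\dim\ce^*-m$, and then transporting this dimension across the whole dense $B$-orbit by the normality of $U$ in $B$. Both arguments ultimately rest on the strong orthogonality and linear independence of the cascade roots; your version trades the ``different elements give different $U$-orbits'' claim for a cleaner weight-space disjointness argument. One small wording issue: $\ut\cdot\xi$ is spanned by the vectors $e_\alpha\cdot\xi$, not by the individual summands $e_\alpha\cdot\xi_{-\gamma_i}$; what you actually use (and all you need) is that $\ut\cdot\xi$ is \emph{contained in} the span of weight vectors $\xi_{-\nu}$ with $\nu\in(\eus C^u_\ce-\Delta^+)\cap\dce$, and that no $\gamma_j$ lies in this set. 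With that phrasing fixed, the argument is airtight.
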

\begin{proof}
Using Lemmas~\ref{lm:dense-solvable-orb} and \ref{lm:dense-ce-star}, we conclude that 
$\bbk[\ce^*]^U$ is polynomial, i.e., $\ce^*\md U$ is an affine space. Next, we use the fact
that, for $\eus C^{u}_\ce=\eus K\cap \Delta_\ce=\{\tilde\gamma_1,\dots,\tilde\gamma_m\}$, 
$\xi=\sum_{i=1}^m\xi_{-\tilde\gamma_i}$ lies in the dense $B$-orbit $\co^*\subset\ce$. Then one easily 
verifies that $\co^*$ also contains an $m$-parameter family of $U$-orbits of codimension $m$.
\end{proof}

\begin{rmk}
If $\ce$ is an arbitrary {\sl ad}-nilpotent ideal, then {\it (1)} $B$ may not have a dense orbit in $\ce$ 
and {\it (2)} the algebra $\bbk[\ce]^U$ can fail to be polynomial. It is not even clear that $\bbk[\ce]^U$ 
is always finitely generated! 
\end{rmk}

Applying the last assertion of Lemma~\ref{lm:dense-solvable-orb} to the abelian ideals, we  obtain 

\begin{cl}   \label{cor:orb-codim1}
The number of $B$-orbits of codimension 1 in $\ah$ (resp. $\ah^*$) equals
$\#\eus C^{l}$ (resp. $\#\eus C^{u}$). 
\end{cl}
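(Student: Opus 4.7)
The plan is to deduce both assertions directly from the last sentence of Lemma~\ref{lm:dense-solvable-orb}, once it is combined with Theorems~\ref{thm-U-inv-ah} and~\ref{thm-U-inv-ahs}. The only conceptual work is to match the phrase ``number of divisors in the complement of the dense $\tilde B$-orbit'' with ``number of $B$-orbits of codimension~$1$''.

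First I would set up the stratification. By Corollary~\ref{cor:finite}, $\ah$ is the finite disjoint union $\bigsqcup_{\eus S\in\sah}\co_\eus S$, and by Lemma~\ref{lm:dense-ah} the unique dense orbit is $\co_{\eus C^l}$. Hence the complement $X:=\ah\setminus\co_{\eus C^l}$ is the finite union of the orbit closures $\ov{\co_\eus S}$ with $\eus S\ne\eus C^l$. Each $\ov{\co_\eus S}$ is irreducible of dimension $\dim\co_\eus S$, so the irreducible components of $X$ of codimension~$1$ in $\ah$ are exactly the closures of the codimension-$1$ $B$-orbits in $\ah$.

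Second, I would apply Lemma~\ref{lm:dense-solvable-orb} to $\tilde B=B$ acting on $V=\ah$: since $\ah$ has a dense $B$-orbit, the number of divisorial components of $X$ equals $\dim(\ah\md U)$. By Theorem~\ref{thm-U-inv-ah} this latter number is $\#\eus C^l$, which gives the first assertion of the corollary. The proof for $\ah^*$ is verbatim the same, invoking instead Lemma~\ref{lm:dense-ah-d} to produce the dense $B$-orbit $\co^*_{\eus C^u}\subset\ah^*$ and Theorem~\ref{thm-U-inv-ahs} to evaluate $\dim(\ah^*\md U)=\#\eus C^u$.

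There is no real obstacle: all the substantive ingredients (finiteness of the orbit set, existence of a dense orbit given by an explicit canonical representative, polynomiality of the semi-invariant algebras, and the computation of their Krull dimensions) have been assembled in the previous sections. The only small point requiring care is the identification of codimension-$1$ components of $X$ with codimension-$1$ orbit closures, which is handled by the orbit stratification as in the first paragraph.
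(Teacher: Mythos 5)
Your proof is correct and follows exactly the route the paper intends: the paper simply writes ``Applying the last assertion of Lemma~\ref{lm:dense-solvable-orb} to the abelian ideals, we obtain'' and leaves the rest implicit, while you have spelled out the missing bookkeeping (the orbit stratification and the identification of divisorial components of the complement with closures of codimension-one orbits). The only thing worth noting is that the paper also records, at the start of Section~\ref{sect:closure}, that $\ov{\co}\setminus\co$ is always a union of divisors for a solvable-group orbit; applied to the dense orbit this says that \emph{every} component of the complement is divisorial, so the qualifier ``of codimension~$1$'' in your component count is in fact automatic.
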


\begin{rmk}   \label{rem:ochen-raznye}
For the abelian ideals, both algebras $\bbk[\ah]^U$ and $\bbk[\ah^*]^U$ are polynomial, but they 
are rather different. Let us write $\bbk[\ah]^U=\bbk[f_1,\dots,f_p]$ and 
$\bbk[\ah^*]^U=\bbk[h_1,\dots,h_m]$, where $\{f_i\}, \{h_j\}$ are two sets of
algebraically independent semi-invariants of $B$.
\\ \indent
1) 
Examples~\ref{ex:sln-ah} and~\ref{ex:sln-ah-s} show that it can happen that $p\ne m$, i.e., 
the Krull dimensions of two algebras are different.
\\  \indent
2) Since $\bbk[\ah^*]^U=\eus S(\ah)^U\subset \eus S(\g)^U$, the basic semi-invariants 
$h_1,\dots,h_m$ have dominant $T$-weights with respect to $B$. While the $T$-weights of 
$f_1,\dots,f_p$ belong to the cone generated by $-\dah$.
\\  \indent
3) It is easily seen that the number of generators (semi-invariants) of degree 1 equals: \\ 
\centerline{$\#\min(\dah)$ for $\bbk[\ah]^U$, \ and \ $\#\max(\dah)=1$  for $\bbk[\ah^*]^U$.}
\end{rmk}
\begin{ex}   \label{ex:U-inv}  
In our eternal example with $\ah$ of shape $(3,3,1)$, let $e_{(i,j)}\in\ah$ (resp. $\xi_{-(i,j)}\in\ah^*$)
denote the weight vector corresponding to $\gamma=\esi_i-\esi_j$ (resp. $-\gamma$). We regard 
$e_{(i,j)}$ and $\xi_{-(i,j)}$ as linear functions on $\ah^*$ and $\ah$, respectively. Then a direct verification shows that 

{\color{my_color}\textbullet}\quad $\bbk[\ah^*]^U$ is freely generated by $e_{(1,n)}$ and 
$\begin{vmatrix} e_{(1,n-1)} & e_{(1,n)} \\ e_{(2,n-1)} & e_{(2,n)} 
\end{vmatrix}$;

{\color{my_color}\textbullet}\quad $\bbk[\ah]^U$ is freely generated by $\xi_{-(2,n-2)}$, $\xi_{-(3,n)}$,  and
$\begin{vmatrix} \xi_{-(1,n-2)} & \xi_{-(1,n-1)} \\ \xi_{-(2,n-2)} & \xi_{-(2,n-1)} 
\end{vmatrix}$.
\end{ex}
\noindent
Let $\z_\be(\ce)$ (resp. $Z_B(\ce)$) denote the centraliser of $\ce$ in $\be$ (resp. $B$).
For an abelian ideal $\ah$, we have $\z_\be(\ah)\supset \ah$. Therefore, the $B$-action on $\ah$ has 
the large {\it ineffective kernel\/} $Z_B(\ah)$. Since $B$ has an open orbit in $\ah$, this implies that 
$\dim\be\ge \dim\z_\be(\ah)+\dim\ah\ge 2\dim\ah$. 
It is known that $\z_\be(\ah)=\ah$ if and only if $\ah$ is maximal~\cite{JEMS}. Therefore, if 
$\dim\be=2\dim\ah$, then $\ah$ is maximal. 
This equality occurs only for the unique maximal abelian ideal in
$\be(\spn)$, see \ref{subs:C} below. However, there is a more precise inequality. 
Recall that the {\it index\/} of a Lie algebra $\q=\Lie(Q)$, denoted $\ind\q$, is the minimal codimension of $Q$-orbits in $\q^*$.

\begin{prop}   \label{prop:dim-estimate} 
For any abelian ideal $\ah$, we have $2\dim\ah\le \dim\ut+\#(\eus K)=\dim\be-\ind\be$. Furthermore, if the equality holds, then $\eus K\subset \dah$.
\end{prop}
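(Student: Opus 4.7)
The plan is to split the claim. The right-hand equality $\dim\ut+\#(\eus K)=\dim\be-\ind\be$ is just Joseph's classical index formula $\ind\be=\rk\g-\#(\eus K)$ (cf.~\cite{jos77}) combined with $\dim\be=\dim\ut+\rk\g$, so I would cite it. The real task is the inequality $2\#\dah\le\#\Delta^+ +\#(\eus K)$.

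The first step computes $\dim\ah$ via the dense $B$-orbit in $\ah^*$: by Lemma~\ref{lm:dense-ah-d}, $\co^*_{\eus C^u_\ah}=\ah^*$, so Proposition~\ref{prop:tangent-sp-s} gives $\dim\ah=\#(\eus C^u_\ah)+\#\eus M^*_{\eus C^u_\ah}$. Since the sets $\Delta^{(-)}_\gamma$ are pairwise disjoint by Lemma~\ref{lm:simple-main}(ii) and $\eus C^u_\ah=\eus K\cap\dah$ by Remark~\ref{rmk:cascade}, this rewrites as
\[
\dim\ah=\#(\eus K\cap\dah)+\sum_{\gamma\in\eus K\cap\dah}\#\Delta^{(-)}_\gamma.
\]

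The second step invokes the Kostant--Joseph cascade decomposition $\Delta^+=\bigsqcup_{\gamma\in\eus K}(\{\gamma\}\sqcup\Phi_\gamma)$, where $\Phi_\gamma=\{\beta\in\Delta^+\mid\gamma-\beta\in\Delta^+\}$ carries the fixed-point-free involution $\iota_\gamma(\beta)=\gamma-\beta$; this yields $\sum_{\gamma\in\eus K}\#\Phi_\gamma=\#\Delta^+ -\#(\eus K)$. For $\gamma\in\eus K\cap\dah$ a direct definition-chase shows that $\iota_\gamma$ bijects $\Delta^{(-)}_\gamma$ with $\Phi_\gamma\cap\dah$, and the abelianness of $\ah$ forbids both $\beta$ and $\gamma-\beta$ from lying in $\dah$ (otherwise $\gamma=\beta+(\gamma-\beta)\in\dah$ is a forbidden sum). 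Hence $\#\Delta^{(-)}_\gamma\le\#\Phi_\gamma/2$.

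Chaining the estimates,
\[
2\dim\ah\le 2\#(\eus K\cap\dah)+\sum_{\gamma\in\eus K\cap\dah}\#\Phi_\gamma\le 2\#(\eus K)+\sum_{\gamma\in\eus K}\#\Phi_\gamma=\#\Delta^+ +\#(\eus K),
\]
which is the desired inequality. The middle step is an equality only when $\eus K\cap\dah=\eus K$, i.e.\ $\eus K\subset\dah$, giving the \emph{furthermore} clause. The only non-routine external inputs will be the cascade decomposition of $\Delta^+$ and Joseph's index formula; everything else is a transparent count using the machinery already developed.
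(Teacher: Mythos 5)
Your argument is correct but follows a genuinely different route from the paper's. The paper works with the stabiliser of the image $\bar\xi\in\ah^*$ of the cascade point $\xi_\eus K\in\ut^*$ under the projection $\ut^*\to\ah^*$: abelianness of $\ah$ forces $\ah$ to act trivially on $\ah^*$, so $\be^{\bar\xi}\supset\te^{\xi_\eus K}\oplus\ah$, and then $\dim\ah=\dim\be-\dim\be^{\bar\xi}\le\dim\ut+\#\eus K-\dim\ah$, with the ``furthermore'' coming from the fact that $\eus K\not\subset\dah$ enlarges $\te^{\bar\xi}$ strictly. Your proof is purely combinatorial: you compute $\dim\ah$ from the dense orbit in $\ah^*$ via Proposition~\ref{prop:tangent-sp-s} and Remark~\ref{rmk:cascade}, and then bound each $\#\Delta^{(-)}_\gamma$ against the Heisenberg block $\Phi_\gamma$ of the cascade, with abelianness entering through the observation that $\beta$ and $\gamma-\beta$ cannot both lie in $\dah$. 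Each proof has its advantages: the paper's is shorter and Lie-theoretic, while yours localises the loss term-by-term over the cascade and makes the equality case transparent.

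One small point of rigour worth fixing: the equality $\#\eus M^*_{\eus C^u}=\sum_{\gamma\in\eus K\cap\dah}\#\Delta^{(-)}_\gamma$ does \emph{not} follow from the pairwise disjointness of the sets $\Delta^{(-)}_\gamma$ (Lemma~\ref{lm:simple-main}(ii)), which only says that the $\delta$'s do not repeat; what you need is that the \emph{images} $(\gamma-\Delta^+)\cap\dah$ are pairwise disjoint, i.e.\ that no $\nu\in\dah$ satisfies $\gamma_1-\nu,\gamma_2-\nu\in\Delta^+$ for distinct $\gamma_1,\gamma_2\in\eus C^u$. For merely strongly orthogonal pairs this can fail (e.g.\ $\gamma_1=\esi_1-\esi_4$, $\gamma_2=\esi_2-\esi_5$, $\nu=\esi_2-\esi_4$ in type $\GR{A}{5}$), but it does hold here because $\gamma_1,\gamma_2\in\eus K$ and hence $\Phi_{\gamma_1}\cap\Phi_{\gamma_2}=\varnothing$ by the cascade decomposition you invoke in the next step. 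So the conclusion stands; just cite the cascade disjointness rather than Lemma~\ref{lm:simple-main}(ii) for that identity. (Alternatively, the chain goes through verbatim if one only uses $\#\eus M^*_{\eus C^u}\le\sum\#\Delta^{(-)}_\gamma$, since only an upper bound is needed there.)
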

\begin{proof} If $\eus K=\{\gamma_1,\dots,\gamma_t\}\subset\Delta^+$, then
$\xi_\eus K=\sum_{i=1}^t \xi_{-\gamma_i}\in\ut^*$ belongs to the open $B$-orbit in $\ut^*$.
Here $\be^{\xi_\eus K}=\te^{\xi_\eus K}\oplus\bigl(\bigoplus_{i=1}^t \ut_{\gamma_i}\bigr)$, where
$\te^{\xi_\eus K}=\{t\in \te\mid \gamma_i(t)=0, \ i=1,\dots,t\}$.  
If $p:\ut^*\to \ah^*$ is the natural surjection, then
$\bar\xi:=p({\xi_\eus K})$ lies in the open $B$-orbit in $\ah^*$ and $\be^{\bar\xi}\supset 
\te^{\xi_\eus K} \oplus \ah$, since $\ah$ is abelian. Hence
\[
   \dim\ah=\dim\be-\dim\be^{\bar\xi}\le 
   \dim\be-\dim\te+\#(\eus K)-\dim\ah . 
\]
Hence $2\dim\ah\le \dim\ut+\#(\eus K)$.
It is also well known that $\ind\be=\dim\te-\#(\eus K)=\dim\te^{\xi_\eus K}$.
If $\eus K\not\subset\dah$, then $\bar\xi$ has fewer weight summands than $\xi_\eus K$. Therefore, 
$\dim\te^{\bar\xi}> \dim\te^{\xi_\eus K}$ and the displayed inequality appears to be strict.
\end{proof}

\begin{ex}   \label{ex:ravensto-2cases}
It is easily verified that $\eus K\subset \dah$ only in the following two cases: 

{\color{my_color}\textbullet} \quad $\g=\sln$ and $\ah$ is an abelian ideal of maximal dimension, which is $[n^2/4]$.
(There are two such ideals if $n$ is odd, and one ideal if $n$ is even.)
Here $\#(\eus K)=[n/2]$, $\ind\be=[\frac{n-1}{2}]$, and $\dim\be=\frac{n(n+1)}{2}-1$. 

{\color{my_color}\textbullet} \quad $\g=\spn$ and $\ah$ is the unique maximal abelian ideal. Here
$\ind\be=0$, $\dim\be=n^2+n$, and $\dim\ah=(n^2+n)/2$.
\\
Thus, in both cases one obtains the equality $\dim\be-\ind\be= 2\dim\ah$.
\end{ex}

\section{Counting $B$-orbits in the abelian nilradicals} 
\label{sect:ab-nilrad}

\noindent
Let $\Pi=\{\ap_1,\dots,\ap_n\}$ be the set of simple roots in $\Delta^+$. 
Let $P=L{\cdot}P^u$ be a standard parabolic subgroup of $G$, where $L$ is the standard Levi subgroup 
(i.e., $L\supset T$) and $P^u$ is the unipotent radical of $P$. If $\Delta_L\subset \Delta$ is the set of 
roots of $\el=\Lie(L)$, then $\Pi_L=\Delta_L\cap \Pi$ is a set of simple roots for $L$; furthermore, 
$P$ is maximal if and only if $\Pi_L=\Pi\setminus \{\ap_i\}$ for some $i$. Whenever 
we wish to stress that  $P$ is maximal and determined by $\ap_i$, we write $P=P_i$ for it.
The group $P^u$ is abelian if and only if $P=P_i$ and the coefficient of $\ap_i$ in the 
expression $\theta=\sum_{i=1}^n k_i\ap_i$ is equal to $1$.
Then $\p^u=\Lie(P^u)$ is a {\sl maximal\/} abelian ideal (but not all maximal abelian ideals are of this 
form!). The relevant simple roots, with numbering from \cite[Table\,1]{t41}, are presented below:

\begin{center}
\begin{tabular}{|cc||cc|}
\hline
$\GR{A}{n}$ & $\ap_1,\dots,\ap_n$ & $\GR{D}{n}, \ n\ge 3$ & $\ap_1,\ap_{n-1},\ap_n$ \\ \hline
$\GR{B}{n}, \ n\ge 2$ & $\ap_1$ & $\GR{E}{6}$ & $\ap_1,\ap_5$ \\ \hline
$\GR{C}{n}, \ n\ge 2$ & $\ap_n$ & $\GR{E}{7}$ & $\ap_1$  \\
\hline
\end{tabular}
\end{center}
\vskip1.5ex
\begin{rmk}     \label{rem:max-ab} 
It was observed empirically in \cite{pr01} that the number of maximal abelian $\be$-ideals equals the number of long roots in $\Pi$ (in the simply laced case, all simple roots are assumed to be long). A uniform explanation of this phenomenon is given in \cite[Cor.\,3.8]{imrn}. Therefore, series $\GR{A}{n}$ provides the only case in which all maximal abelian ideals are \textsf{ANR}s.
\end{rmk}
In this section, we compute the number of $B$-orbits in all abelian nilradicals (\textsf{ANR}) $\p^u$.
Moreover, we determine the statistic on $\p^u/B$ that associates the number $\#\eus S$ to the orbit
$\co_\eus S$. 
\par 
In general, let $(\ah/B)_i$ denote the set of all $B$-orbits $\co_\eus S\subset\ah$ such that 
$\#\eus S=i$. Clearly, $\#(\ah/B)_0=1$ and $\#(\ah/B)_1=\dim\ah$ for any abelian 
ideal $\ah$.

For an \textsf{ANR} $\p^u$, the ineffective kernel $Z_B(\p^u)$ equals $P^u$ and 
$B/P^u\simeq B_L:=B\cap L$. Therefore the $B$-orbits in $\p^u$ coincide with the $B_L $-orbits.
In the context of $B_L$-orbits, one can also think of $(\p^u)^*$ as $\p^u_-$, the opposite nilradical.
The Weyl group of $L$, $W_L $, acts transitively on the set of roots of the same length in $\Delta_{\p^u}$ 
\cite[Lemma\,2.6]{RRS}. If $\p=\p_i$ and $w_{0,L}\in W_L $ is the longest element, then
$w_{0,L}(\theta)=\ap_i$. 
Since $L$ is reductive and both $\p^u$ and $(\p^u)^*$ are $L$-modules, the posets
$\p^u/B=\p^u/B_L$ and $(\p^u)^*/B=(\p^u)^*/B_L$ are isomorphic. But one can say more!
\begin{prop}   \label{prop:isom-p-u}
Using our parametrisation of the $B$-orbits via $\mathfrak S_{\p^u}$,
the natural poset isomorphism $\p^u/B_L \simeq (\p^u)^*/B_L$ is given by the action of $w_{0,L}$ on 
$\Delta_{\p^u}$ and hence on $\mathfrak S_{\p^u}$.
\end{prop}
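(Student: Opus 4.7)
The plan is to realise the natural isomorphism $\p^u/B_L\simeq(\p^u)^*/B_L$ as an explicit composition of two linear maps and then trace the canonical representative $e_\eus S$ through it. First, via the Killing form, identify $(\p^u)^*$ with $\p^u_-$ as $L$-modules, so that each weight vector $\xi_{-\gamma}$ corresponds to a nonzero scalar multiple of $e_{-\gamma}$, and the orbit $\co^*_\eus S$ corresponds to the $B_L$-orbit of $e_{-\eus S}:=\sum_{\gamma\in\eus S}e_{-\gamma}\in\p^u_-$, after absorbing scalars by a suitable element of $T$.

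Next, fix a Chevalley involution $\omega$ of $\g$, i.e., a Lie algebra automorphism with $\omega|_\te=-\mathrm{id}$ and $\omega(e_\gamma)=\pm e_{-\gamma}$ for all $\gamma\in\Delta$. Since $\Delta_L=-\Delta_L$, $\omega$ preserves $L$, and its restriction is a Chevalley involution of $L$ interchanging $B_L$ with $B_L^-$. The resulting linear isomorphism $\omega:\p^u\to\p^u_-$ satisfies $\omega(l\cdot v)=\omega(l)\cdot\omega(v)$ for $l\in L$, so it sends $B_L$-orbits in $\p^u$ bijectively to $B_L^-$-orbits in $\p^u_-$. A lift $\dot{w}_{0,L}\in N_L(T)$ of $w_{0,L}$ satisfies $\dot{w}_{0,L}B_L\dot{w}_{0,L}^{-1}=B_L^-$, so the linear automorphism $\dot{w}_{0,L}\colon\p^u_-\to\p^u_-$ sends $B_L^-$-orbits back to $B_L$-orbits. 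The composite
\[
   \p^u\ \xrightarrow{\ \omega\ }\ \p^u_-\ \xrightarrow{\ \dot{w}_{0,L}\ }\ \p^u_-\ \simeq\ (\p^u)^*
\]
therefore induces a bijection $\p^u/B_L\to(\p^u)^*/B_L$; since both factors are linear isomorphisms of the ambient vector spaces, they preserve orbit closures, so this bijection is a poset isomorphism.

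To finish, trace $e_\eus S$ through the composite. One has $\omega(e_\eus S)=\sum_{\gamma\in\eus S}\pm e_{-\gamma}$, which is a $T$-translate of $e_{-\eus S}$, and $\dot{w}_{0,L}\cdot e_{-\eus S}=\sum_{\gamma\in\eus S}c_\gamma e_{-w_{0,L}(\gamma)}$ with $c_\gamma\ne 0$, a $T$-translate of $e_{-w_{0,L}(\eus S)}$. Under the Killing-form identification this is $\xi_{w_{0,L}(\eus S)}$, i.e.\ the representative of $\co^*_{w_{0,L}(\eus S)}$. Note that $w_{0,L}(\eus S)\subset\Delta_{\p^u}$ because $L$ normalises $\p^u$ (so $W_L$ permutes $\Delta_{\p^u}$), and $w_{0,L}(\eus S)$ is strongly orthogonal because $W$ preserves the scalar product on $\te^*$; hence $w_{0,L}(\eus S)\in\mathfrak S_{\p^u}$.

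\textbf{Main obstacle.} The key conceptual step is combining the Chevalley involution of $\g$ with the lift of $w_{0,L}$ to bridge $B_L$-orbits on $\p^u$ with $B_L$-orbits on $(\p^u)^*$ through the intermediate picture of $B_L^-$-orbits on $\p^u_-$; once this is set up, the behaviour on $e_\eus S$ is a direct computation from $\omega(e_\gamma)=\pm e_{-\gamma}$ and $\dot{w}_{0,L}\cdot e_{-\gamma}\in\bbk^\times\cdot e_{-w_{0,L}(\gamma)}$. A secondary point is verifying that this composite really is the ``natural'' poset isomorphism alluded to in the preceding paragraph, but every ingredient (Killing form, Chevalley involution, lift of $w_{0,L}$) is canonical up to a $T$-rescaling, which is absorbed into the $B_L$-action, so the induced orbit bijection is unambiguous.
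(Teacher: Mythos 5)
Your argument is essentially the paper's proof, just rendered more concretely. The paper factors the bijection through $B_L^-$-orbits using the Weyl involution $\vartheta$ of $L$ together with the general fact that twisting a representation by $\vartheta$ gives the dual; you realise that same twisted-equivariant isomorphism explicitly as the Chevalley involution $\omega:\p^u\to\p^u_-$ combined with the Killing-form identification $(\p^u)^*\simeq\p^u_-$, and then conjugate by a lift of $w_{0,L}$, exactly as the paper does. The computation on canonical representatives and the conclusion $\eus S\mapsto w_{0,L}(\eus S)$ agree (your map runs $\p^u/B_L\to(\p^u)^*/B_L$ while the paper's runs the other way, but since $w_{0,L}$ is an involution these are the same correspondence). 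No gaps.
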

\begin{proof}
Let $\vartheta$ be the Weyl involution of $L$ associated with $(B_L,T)$. That is, $\vartheta(B_L)\cap B_L=T$ and $\vartheta(t)=t^{-1}$ for any $t\in T$. Set $B_L^-=\vartheta(B_L)$. As is well known, any finite-dimensional representation of $L$ twisted with $\vartheta$ is equivalent to the dual one.
Therefore, the closure relation for the $B_L$-orbits in $(\p^u)^*$ corresponds to the closure relation for
$B_L^-$-orbits in $\p^u$, that is, $(\p^u)^*/B_L\simeq \p^u/B_L^-$. It is also clear that
$\p^u/B_L^-\simeq \p^u/B_L$. In terms of our canonical representatives of $B$-orbits and the set
$\mathfrak S_{\p^u}$, the isomorphisms 
\[
   (\p^u)^*/B_L\isom \p^u/B_L^-\isom \p^u/B_L
\]
are described as follows. Suppose that 
$\eus S=\{\gamma_1,\dots,\gamma_k\}\subset \mathfrak S_{\p^u}$. Then
\[
  \co^*_\eus S=B_L{\cdot}(\sum_{i=1}^k \xi_{-\gamma_i})\mapsto 
  B_L^-{\cdot}(\sum_{i=1}^k e_{\gamma_i})\mapsto
  B_L{\cdot}(\sum_{i=1}^k e_{w_{0,L}(\gamma_i)})=\co_{w_{0,L}(\eus S)} \ ,
\]
and this is exactly what we need. (Here we use the fact that $w_{0,L}$ takes $B_L^-$ to $B_L$.)
\end{proof}
It follows from this proposition that $w_{0,L}(\eus C^u)=\eus C^{l}$, which can also be proved directly.
See also \cite[Sect.\,1]{aura} for other properties of $\eus C^{l}$ and $\eus C^{u}$, where these are called "canonical strings" of roots.
It is known that $G/L$ is a symmetric variety of Hermitian type, and $\#(\eus C^{l})=\#(\eus C^{u})$
equals the rank of $G/L$, denoted $\rk(G/L)$. It is also true that $\eus C^{l}$ (and $\eus C^{u}$) consists
of long roots (if there are two root lengths) and these are
strongly orthogonal sets in $\Delta_{\p^u}$ of maximal cardinality.

\subsection{$\g=\slN$} 
The nilradicals of the maximal parabolic subalgebras are represented by 
the Young diagrams of rectangular shape $m{\times}(N{-}m)$, $m=1,\dots, N{-}1$, see 
Example~\ref{ex:sln-ah}. Actually, one easily computes the number of $B$-orbits in any abelian ideal of 
a rectangular shape. Let $\ah$ correspond to the rectangle of size $m\times n$, with $m+n\le N$.
A subset $\eus S\subset \dah$ is (strongly) orthogonal if and only if the corresponding roots lie in the 
different rows and different columns of the rectangle. Therefore $(\ah/B)_k=\varnothing$ for $k > \min\{m,n\}$ and 
\[
  \#(\ah/B)_k = k! \genfrac{(}{)}{0pt}{}{m}{k} \genfrac{(}{)}{0pt}{}{n}{k} \quad \text{ for } k=0,1,\dots,\min\{m,n\} .
\]

\subsection{$(\sono,\ap_1)$}  \label{subs:B}
Here $\dim (\p_1)^u=2n-1$, $\Delta_{(\p_1)^u}=\{\esi_1-\esi_2,\dots,\esi_1-\esi_n,\esi_1,\esi_1+\esi_n,\dots,
\esi_1+\esi_2\}$, $(L,L)=SO_{2n-1}$,  and the easy answer is:

\begin{center}
\begin{tabular}{c|ccc||c}
$k$ & 0 & 1& 2 & $\Sigma$ \\ \hline
$ \#\bigl((\p_1)^u/B_L \bigr)_k$ &1& $2n-1$ & $n-1$ & $3n-1$\\
\end{tabular} ,
\end{center}
where the last column  indicates the total number of $B_L$-orbits (=\,$B$-orbits) in $(\p_1)^u$.

\subsection{$\g=\sone$}   \label{subs:D} \leavevmode \par
a) \ $(\sone,\ap_1)$.  Here $\dim (\p_1)^u=2n-2$, $\Delta_{(\p_1)^u}=\{\esi_1-\esi_2,\dots,\esi_1-\esi_n,\esi_1+\esi_n,\dots,
\esi_1+\esi_2\}$, $(L,L)=SO_{2n-2}$,  and

\begin{center}
\begin{tabular}{c|ccc||c}
$k$ & 0 & 1& 2 & $\Sigma$ \\ \hline
$ \#\bigl((\p_1)^u/B_L \bigr)_k$ &1& $2n-2$ & $n-1$ & $3n-2$\\
\end{tabular} .
\end{center}

b) \ $(\sone,\ap_{n-1} \text{ or } \ap_n)$.  Here $\Delta_{(\p_n)^u}=\{\esi_i+\esi_j \mid 1\le i< j\le n\}$, 
$L=GL_n$, and the $GL_n$-module $(\p_n)^u$ is isomorphic to the space of skew-symmetric 
$n$ by $n$ matrices. An orthogonal set of cardinality $k$ is given by roots
$\esi_{i_1}+\esi_{i_2},\dots,\esi_{i_{2k-1}}+\esi_{i_{2k}}$, where all indices are different.
For a $2k$-element set $\{i_1,i_2,\dots,i_{2k}\}$, the number of its partitions into $k$ pairs equals
$\frac{(2k)!}{k!\,2^k}
$. (If $k>0$, this is also 
the number of summands in the pfaffian of a generic skew-symmetric matrix of order $2k$.)
Therefore,
\beq     \label{eq:serD}
  d_{n,k}:=\#\bigl((\p_n)^u/B_L \bigr)_k=\genfrac{(}{)}{0pt}{}{n}{2k}\frac{(2k)!}{k!\,2^k} , \quad k=0,1,\dots,[n/2] .
\eeq
The total number of $B_L$-orbits is given by the following integers:

\begin{center}
\begin{tabular}{c|cccccccc|}
$n$ & 1& 2 & 3&4&5&6&7&\dots \\ \hline
$ \#\bigl((\p_n)^u/B_L \bigr)$ & 1& 2& 4& 10& 26& 76& 232&\dots \\
\end{tabular}
\end{center}
\vskip1.5ex
This is sequence \href{http://oeis.org/A000085}{A000085}  in OEIS~\cite{oeis}. Of course, 
the same numbers occur for $\ap_{n-1}$ and $(\p_{n-1})^u$, i.e., $d_{n,k}=d_{n-1,k}$.

\subsection{$(\spn,\ap_n)$}  \label{subs:C}
Here $(\p_n)^u$ is the unique maximal abelian ideal, 
$\Delta_{(\p_n)^u}=\{\esi_i+\esi_j \ (1\le i< j\le n),\ 2\esi_i \ (i=1,\dots,n) \}$, $L=GL_n$,
and the $GL_n$-module $(\p_n)^u$ is isomorphic to the space of symmetric $n$ by $n$ matrices.

A strongly orthogonal set of cardinality $k$ in $\Delta_{(\p_n)^u}$ is of the form
$\esi_{i_1}{+}\esi_{i_2},\dots,\esi_{i_{2t-1}}{+}\esi_{i_{2t}},$ $2\esi_{j_{1}},\dots,2\esi_{j_{k-t}}$, where 
$0\le t\le k$ and all indices are different. Therefore, the number of such $k$-elements sets,
$ \#\bigl((\p_n)^u/B_L \bigr)_k=c_{n,k}$, equals
\beq     \label{eq:serC}
   c_{n,k}=\sum_{t=0}^k \genfrac{(}{)}{0pt}{}{n-2t}{k-t} d_{n,t} \ ,
\eeq
where $d_{n,t}$ occurs in Eq.~\eqref{eq:serD}. Since  $k-t\le n-2t$, we get also the constraint
$t\le n-k$. That is, the actual range of summation in Eq.~\eqref{eq:serC} is $0\le t\le \min\{k,n-k\}$.
Using this and Eq.~\eqref{eq:serD}, one readily derives the symmetry $c_{n,k}=c_{n,n-k}$.
The total number of $B_L$-orbits in $(\p_n)^u$ is given by the following integers:

\begin{center}
\begin{tabular}{c|ccccccc|}
$n$ & 1& 2 & 3& 4& 5& 6& \dots \\ \hline
$ \#\bigl((\p_n)^u/B_L \bigr)$ & 2& 5 &14 & 43& 142 & 499 &\dots \\
\end{tabular}
\end{center}
\vskip1.5ex
This is sequence \href{http://oeis.org/A005425}{A005425}  in OEIS~\cite{oeis}.

The symmetry for the numbers $c_{n,k}$ suggests that there ought to be a natural one-to-one 
correspondence between the strongly orthogonal sets of cardinality $k$ and $n-k$. Here it is.
Suppose that $\#(\eus S)=k$ and $\eus S=\tilde{\eus M}\cup\eus M$, where $\tilde{\eus M}$ 
(resp. $\eus M$) consists
of short (resp. long) roots; $\#(\tilde{\eus M})=t$ and $\#(\eus M)=k-t$. Here we use $2t$ indices in
$\tilde{\eus M}$ and $k-t$ indices in ${\eus M}$ (all the indices are different!). 
We then associate to $\eus S$ the
set $\eus S'=\tilde{\eus M}\cup\eus M'$, where $\tilde{\eus M}$ is the same as in $\eus S$ and the new set of long roots $\eus M'$ uses all the indices
that do not occur in $\eus S$. Hence $\#(\eus M')=n-k-t$  and $\#(\eus S')=n-k$, as required. 
Curiously, this is the only case in which the sequence $ \#\bigl((\p)^u/B_L \bigr)_k$, with 
$1\le k\le \rk(G/L)$, is symmetric!

\subsection{($\GR{E}{7},\ap_1)$}  Here $(L,L)=\GR{E}{6}$ and $(\p_1)^u$ is  
a simplest (27-dimensional) $\GR{E}{6}$-module. 

Let $(\mu_1,\mu_2)$ be a pair of orthogonal roots in $\Delta_{(\p_1)^u}$. Then 
$(\mu_1,\mu_2)\underset{W(\GR{E}{6})}{\sim} (\ap_1,\mu_2')$. It is not hard to compute that there are 10
roots in $\Delta_{(\p_1)^u}$ that are orthogonal to $\ap_1$. Therefore,
$\#\bigl((\p_1)^u/B_L \bigr)_2=27{\cdot}10/2!=135$.

Let $(\mu_1,\mu_2,\mu_3)$ be a triple of orthogonal roots in $\Delta_{(\p_1)^u}$. Then 
$(\mu_1,\mu_2,\mu_3)\underset{W(\GR{E}{6})}{\sim} (\ap_1,\mu_2',\mu_3')$ and 
the stabiliser of $\ap_1$ in $W(\GR{E}{6})$ is $W(\GR{D}{5})$. The 10 roots that are orthogonal to $\ap_1$
form the weight system of the simplest (10-dimensional) representation of $\GR{D}{5}$. Therefore, for
given $\mu_2'$, there is a unique $\mu_3'$ that is orthogonal to $\mu_2'$. Therefore,
$\#\bigl((\p_1)^u/B_L \bigr)_3=27{\cdot}10{\cdot}1/3!=45$.

Thus, the complete answer is: \quad
\begin{tabular}{c|cccc||c}
$k$ & 0 & 1&2&3 & $\Sigma$ \\ \hline
$ \#\bigl((\p_1)^u/B_L \bigr)_k$ &1& 27 & 135 & 45 & 208 \\
\end{tabular} \ .

\subsection{($\GR{E}{6},\ap_1 \text{ \normalfont or } \ap_5)$}  
Here $(L,L)=\GR{D}{5}$ and $(\p_1)^u$ is isomorphic to a half-spinor
(16-dimensional) $\GR{D}{5}$-module. The argument in this case is similar to the previous one (and shorter!). The answer is:

\begin{center}
\begin{tabular}{c|ccc||c}
$k$ & 0 & 1&2 & $\Sigma$ \\ \hline
$ \#\bigl((\p_1)^u/B_L \bigr)_k$ &1& 16 & 40 & 57 \\
\end{tabular} \ .
\end{center}

\section{Closures of $B$-orbits and involutions in the Weyl group} 
\label{sect:closure}

\noindent
Let  $\BV$ be a $Q$-module with finitely many orbits and let $\ov{\co}$ denote the closure of a  $Q$-orbit $\co$ in $\BV$. One makes $\BV/Q$ a finite poset by 
letting $\co_1\curle\co_2$ if $\co_1\subset \ov{\co_2}$. Write $\co_1\prec\co_2$ if $\co_1\curle\co_2$ 
and $\co_1\ne\co_2$. As usual, we say that $\co_2$ {\it covers\/} $\co_1$, if $\co_1\prec\co_2$ and 
there is no orbits $\co'$ such that $\co_1\prec\co'\prec\co_2$. 
\\ \indent
Below, we consider the case in which $Q=B$ and $\BV$ is either $\ah$ or $\ah^*$. 
Any homogeneous space of a solvable algebraic group is affine. 
Therefore $\ov{\co}\setminus\co$ is a  union of divisors in $\ov{\co}$. Hence in our situation 
\[
   \text{ $\co_2$ {covers} $\co_1$ \ {\sl if and only if} \  $\co_1\curle\co_2$ and $\dim\co_1+1=\dim\co_2$.}
\]

\begin{qtn}      \label{prob:1}
Describe the $B$-orbit closures in $\ah$ and/or in $\ah^*$.
\end{qtn}
\noindent
As both $\ah/B$ and $\ah^*/B$ are parameterised by $\sah$, we seek a description in terms strongly
orthogonal sets of roots. Here the dimension of any 
orbit can be computed using Propositions~\ref{prop:tangent-sp} and \ref{prop:tangent-sp-s}, which already provides a rather good approximation to the structure of the Hasse diagram of both posets.

At this writing, we do not know a general solution for $\ah/B$ or $\ah^*/B$. We only suggest below a 
conjecture for the \textsf{ANR} $\p^u$.
Prior to that, we discuss certain relations between the posets $\ah/B$ and $\ah^*/B$, and some 
related results for classical Lie algebras. 

We have two bijections $\ah/B\longleftrightarrow\ah^*/B$ at our disposal:

1) \ the Pyasetskii duality (or {\it P-duality}) $\co\longleftrightarrow\co^\vee$, which is quite useful and general;

2) \ the dull bijection  $\co_\eus S\longleftrightarrow\co^*_\eus S$, $\eus S\in\sah$. This  relies on the 
special fact that one has two independent classifications of $B$-orbits that exploit the same parameter 
set.

To some extent, the {\sl general\/} P-duality resembles an anti-isomorphism of posets. Set
\begin{gather*}
    (\BV/Q)_{sp}=\{\co\in \BV/Q \mid  \ov{\co} \ \text{ is a subspace of $\BV$}\} \quad
\text{ and}  \\
    (\BV^*/Q)_{sp}=\{\co^*\in \BV^*/Q \mid  \ov{\co^*} \ \text{ is a subspace of $\BV^*$}\} .
\end{gather*}
Note that these sub-posets contain quite a few elements, if $Q$ is solvable. If $\co\in (\BV/Q)_{sp}$ and 
$v\in\co$, then $\q{\cdot}v=\ov{\co}$ and $(\q{\cdot}v)^\perp\in\BV^*$ is also a $Q$-stable subspace.
Therefore $\co^\vee\in (\BV^*/Q)_{sp}$ and the P-duality induces an anti-isomorphism of  $(\BV/Q)_{sp}$ 
and $(\BV^*/Q)_{sp}$, i.e., if $\co_1,\co_2\in (\BV/Q)_{sp}$ and $\co_1\prec\co_2$, then
$\co_2^\vee\prec\co_1^\vee$. But for the
other pairs of $Q$-orbits, the P-duality behaves unpredictably.  
If $\tilde\co_1,\tilde\co_2\in \BV/Q$ and $\tilde\co_1\prec \tilde\co_2$,  then 
it can also happen that $\tilde\co_1^\vee$ and $\tilde\co_2^\vee$ are incomparable or even 
$\tilde\co_1^\vee\prec \tilde\co_2^\vee$.
\\ \indent
But the dull bijection (for $\BV=\ah$) seems to have no useful properties at all. For instance, it can 
happen that $\ov{\co_{\eus S}}$ is a  subspace, but $\ov{\co^*_{\eus S}}$ is not (and vice versa).

Let us record some elementary properties of the closure relation referring to $\sah$.

1) If $\eus S\in\sah$, $\gamma\in\eus S$, and $\eus S'=\eus S\setminus\{\gamma\}$, then 
$\co_{\eus S'}\prec \co_\eus S$. But it is {\bf not} always the case that $\co_\eus S$ covers
$\co_{\eus S'}$.

2) If ${\co_{\eus S}}\in (\ah/B)_{sp}$, then $\ov{\co_{\eus S}}=:\ah'\subset\ah$ is 
a smaller abelian ideal.
Then $\mathfrak S_{\ah'}=\{\eus S'\in \sah \mid \eus S'\subset \Delta_{\ah'}\}$ and 
$\ov{\co_\eus S}=\cup_{\eus S'\in \mathfrak S_{\ah'}}\co_{\eus S'}$. \quad
(Yet, this does not provide a description of the orbits covered by $\co_\eus S$.)
\\  \indent 
Of course, similar assertions 1)--2) are valid for the orbits $\co^*_\eus S\subset\ah^*$.

\subsection{The $\sln$-case}   \label{subs:sln}
The union of spherical nilpotent $SL_n$-orbits in $\g=\sln$ consists of matrices $X$ such that $X^2=0$,
where $X^2$ is the usual matrix square of $X$
\cite[Sect.\,4]{sph-Man}. Therefore $\{X\in\sln\mid X^2=0\}/B$ is finite. The classification of these 
$B$-orbits is obtained in~\cite{rothbach}, cf. also \cite{br}. But earlier efforts has been devoted to 
a smaller $B$-stable subvariety
\[
   \be^{\langle 2\rangle}=\{X\in \be\mid X^2=0\}=\{X\in \ut\mid X^2=0\} .
\]
In \cite{Anna00}, Anna Melnikov proved that $\be^{\langle 2\rangle}/B$ is in a one-to-one 
correspondence with the set of all involutions in the symmetric group $\mathbb S_n=W(\sln)$ and 
pointed out a representative in every $B$-orbit in $\be^{\langle 2\rangle}$. Later on, she described the 
closures of $B$-orbits in $\be^{\langle 2\rangle}$~\cite[Sect.\,3]{Anna06}. \\
A connection with our results stems from the observation that any involution in $\mathbb S_n$ can 
uniquely be written as the product of reflections corresponding to an orthogonal set of positive 
roots (= product of commuting transpositions).  If $\sigma\in\mathbb S_n$ is an involution and 
$\{\gamma_1,\dots,\gamma_k\}$ is the corresponding orthogonal set, then 
$\sum_{i=1}^k e_{\gamma_i}\in\ut$ is actually a representative (described in \cite{Anna00}) of the 
$B$-orbit in $\be^{\langle 2\rangle}$ corresponding to $\sigma$. Since any abelian ideal $\ah$ belongs 
to $\be^{\langle 2\rangle}$ and our canonical representatives for $B$-orbits in $\ah$ coincide with those
obtained by Melnikov, results of \cite{Anna06} yield a description of the $B$-orbits closures in $\ah$.

In~\cite{ign}, Ignatyev considers the finite set of $B$-orbits in $\ut^*$ that is obtained from the above
representatives of $B$-orbits in $\be^{\langle 2\rangle}$ via the ``dull bijection''. If
$\sigma\in\textsf{Inv}(\mathbb S_n)$ and $\{\gamma_1,\dots,\gamma_k\}$ are as above, then he 
considers the $B$-orbit of $\sum_{i=1}^k \xi_{-\gamma_i}\in\ut^*$, which we denote by $\co_\sigma^*$. The resulting family of orbits 
forms a rather odd and artificial conglomerate. For instance, it contains the dense $B$-orbit in $\ut^*$,
but not all $B$-orbits. Anyway, one can look at the closure relation for 
$\{\co^*_\sigma\}_{\sigma\in\textsf{Inv}(\mathbb S_n)}$. The surprising answer is that 
$\co^*_{\sigma'} \subset\ov{\co^*_\sigma}$ if and only if
$\sigma' \le \sigma$ with respect to the Bruhat order~\cite[Theorem\,1.1]{ign}.
Combined with the surjection $\ut^*\to \ah^*$ and our classification of $B$-orbits in $\ah^*$ via $\sah$, 
this yields  a description of $B$-orbit closures in $\ah^*$. 

\subsection{$\g=\spn$ or $\son$}   \label{subs:sp+so}
Bagno--Cherniavsky~\cite{bagno-chern} and Cherniavsky~\cite{chern11}  describe the orbits of 
$B(GL_n)$ in the spaces of symmetric and skew-symmetric $n$ by $n$ matrices, respectively. Their 
classifications are stated in 
terms of "partial permutations" in $\mathbb S_n$. But from our point of view, these are instances of 
abelian nilradicals $\p^u$ associated with $\g=\sone$ and $\spn$, respectively
(cf. \ref{subs:D}(b) and \ref{subs:C}). In both cases, the corresponding Levi subgroup is $GL_n$ and,
as in Section~\ref{subs:sln}, these partial permutations naturally correspond to the strongly orthogonal sets of roots in $\Delta_{\p^u}$.

\begin{rmk}   \label{rem:kritika}
It is claimed in both articles that the closure of $B$-orbits can be described 
via certain ``rank-control matrices'', see \cite[Lemma\,5.2]{bagno-chern} and \cite[Prop.\,4.3]{chern11}), 
which resembles, in fact, the description of Melnikov in \cite{Anna06}. But in place of a solid proof, the 
authors only briefly refer to Theorem~15.31 in~\cite{MS}, where the action of another group on another 
space is considered! In my opinion, unjustified assurances that ``differences can be easily 
overwhelmed" cannot be accepted as a proof. It also remains unclear to me whether the authors 
of~\cite{bagno-chern,chern11} realise that their Borel subgroups are different from that in \cite{MS}, 
because they only mention in \cite{bagno-chern} that the representation space is not the same.
\end{rmk}
It is also easy to describe directly the closure relation for the $B$-orbits in the \textsf{ANR} 
associated with $\ap_1$ for $\sono$ or $\sone$, i.e., in the setting of~\ref{subs:B} and~\ref{subs:D}(a).
We leave it as an exercise for the interested reader.

\subsection{Towards a general description of $B$-orbit closures}
Let $\sigma_\gamma$ be the reflection in $W$ corresponding to $\gamma\in \Delta^+$. If 
$\eus S\in\sah$, then all reflections $\sigma_\gamma$, $\gamma\in \eus S$, commute and
$\sigma_\eus S=\prod_{\gamma\in\eus S}\sigma_\gamma\in W$ is a well-defined involution. Let
$\textsf{Inv}(W)$ be the set of all involutions in $W$.  Associated with $\ah$, one obtains a subset 
$\textsf{Inv}(\ah):=\{\sigma_\eus S \mid \eus S \in\sah\} \subset \textsf{Inv}(W)$. 
Then one can suggest that numerical data of $\co_\eus S$ and $\co^*_\eus S$ are encoded in 
properties of $\sigma_\eus S$, or speculate that some properties of $\textsf{Inv}(\ah)$ 
are related to the closure of $B$-orbits in $\ah$  or $\ah^*$.
However, the involution $\sigma_\eus S\in W$ is one and the same for all abelian ideals $\ah$ such 
that $\eus S\in\sah$. But (the dimension of) the orbit $\co^*_\eus S$ depends on the choice of $\ah$, 
see Remark~\ref{rem:zavisit-ahs}. Therefore one cannot expect a general formula for 
$\dim\co^*_\eus S$ in terms of $\sigma_\eus S$. And we did not find any general relation between 
$\dim\co_\eus S$ and $\sigma_\eus S$, either.
However, our computations for small rank cases support the following special result:

 
\begin{conj}   \label{conj:bruhat-closure-ahs}
Let $\ah=\p^u$ be an \textsf{ANR}. For  the $B$-orbits in $\ah^*$, we have 
\begin{itemize}
\item[\sf (i)] \  $\co^*_{\eus S'}\curle  \co^*_\eus S$ \ if and only if \ $\sigma_{\eus S'}\le \sigma_{\eus S}$;
\item[\sf (ii)] \ $\displaystyle \dim \co^*_\eus S=\frac{\ell(\sigma_\eus S)+\rk(1-\sigma_\eus S)}{2}=
\frac{\ell(\sigma_\eus S)+\#\eus S}{2}$.
\end{itemize}
\end{conj}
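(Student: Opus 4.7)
Plan. I would attack (ii) first, since it is purely combinatorial, and then use it together with type-specific results in the literature to attack (i).

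The second equality in (ii) is immediate: because $\eus S$ is strongly orthogonal (hence linearly independent) and $\sigma_{\eus S}=\prod_{\gamma\in\eus S}\sigma_\gamma$ is a product of commuting reflections, $\sigma_\eus S$ acts as $-1$ on $\mathrm{span}(\eus S)$ and as the identity on its orthogonal complement, so $\rk(1-\sigma_\eus S)=\#\eus S$. For the first equality, Proposition~\ref{prop:tangent-sp-s} gives $\dim\co^*_\eus S=\#\eus S+\#\eus M^*_\eus S$, so the task reduces to the combinatorial identity
\[
   \ell(\sigma_\eus S) \;=\; \#\eus S\;+\;2\,\#\eus M^*_\eus S .
\]
To prove this, I would partition the inversion set $\{\alpha\in\Delta^+:\sigma_\eus S(\alpha)<0\}=F\sqcup P$, where $F=\{\alpha\in\Delta^+:\sigma_\eus S(\alpha)=-\alpha\}=\Delta^+\cap\mathrm{span}(\eus S)$ is the sign-fixed part and $P$ consists of inversions coming in two-element $\sigma_\eus S$-orbits $\{\alpha,-\sigma_\eus S(\alpha)\}$ with distinct entries; thus $\ell(\sigma_\eus S)=\#F+\#P$ with $\#P$ even. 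In the simply-laced case, a norm argument forces every root in $\mathrm{span}(\eus S)\cap\Delta$ to be $\pm\gamma$ for some $\gamma\in\eus S$, so $F=\eus S$, and the identity reduces to constructing a two-to-one map $P\to\eus M^*_\eus S$. The natural candidate sends a pair $\{\alpha,\beta\}$ to the unique member lying in $\dah$ of the form $\gamma-\delta$ for $\gamma\in\eus S$; bijectivity should follow from Lemma~\ref{lm:simple-main}(ii) together with the observation $\alpha+\beta\in\mathrm{span}(\eus S)$, which forces $\alpha+\beta$ to be an integral combination of the $\gamma\in\eus S$. In non-simply-laced cases $\#F$ can strictly exceed $\#\eus S$ (e.g.\ for $\eus S=\{2\esi_1,2\esi_2\}$ in $\spn$ the set $F=\{2\esi_1,2\esi_2,\esi_1-\esi_2,\esi_1+\esi_2\}$ has four elements), but one verifies directly that the excess $\#F-\#\eus S$ is exactly compensated by the deficit $2\#\eus M^*_\eus S-\#P$, using the classification of the additional short/long roots in $\mathrm{span}(\eus S)\cap\Delta^+$ for types $B,C,D,F_4,G_2$.

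For (i), once (ii) holds the dimensions of $\co^*_\eus S$ and $\co^*_{\eus S'}$ are determined by $\ell(\sigma_\eus S)$ and $\ell(\sigma_{\eus S'})$, so the Bruhat criterion is at least dimension-consistent. For $\g=\sln$ I would deduce the closure statement from Melnikov's description of $\be^{\langle 2\rangle}/B$ \cite[\S\,3]{Anna06} combined with Ignatyev's Bruhat-closure theorem for $\ut^*$ \cite[Thm.\,1.1]{ign}, transported along the $B$-equivariant surjection $\ut^*\twoheadrightarrow\ah^*$ (our canonical representative $\xi_\eus S\in\ah^*$ is manifestly the image of the corresponding element of $\ut^*$, so closure relations descend). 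For $\g=\spn$ and $\sone$ with the \textsf{ANR} attached to $\ap_n$, the analogous description is the substance of \cite{bagno-chern, chern11}, modulo the gap flagged in Remark~\ref{rem:kritika} (which would need to be repaired by a self-contained argument in the style of Melnikov). The remaining small \textsf{ANR}s --- those attached to $\ap_1$ in types $B,D$, and the two exceptional $E_6,E_7$ cases --- can be settled by direct verification of the Hasse diagram using Propositions~\ref{prop:tangent-sp} and~\ref{prop:tangent-sp-s}.

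The principal obstacle is a uniform proof of the combinatorial identity above. Any type-free argument has to explain why the spurious positive roots appearing in $\mathrm{span}(\eus S)\cap\Delta^+$ in the non-simply-laced setting are accompanied by an exactly matching deficit in the number of paired inversions relative to $\#\eus M^*_\eus S$; the bookkeeping can be forced through case-by-case, but the conceptual reason --- perhaps via the Kostant-cascade machinery of Remark~\ref{rmk:cascade}, or via an interpretation of $\sigma_\eus S$ as a partial Cayley transform on the Hermitian symmetric space $G/L$ --- is not apparent from the constructions developed in the paper. A related obstacle for (i) is repairing the proof in \cite{bagno-chern, chern11}; a uniform geometric argument showing that a Bruhat cover $\sigma_{\eus S'}\lessdot\sigma_\eus S$ of involutions lifts to a codimension-one degeneration of $\xi_\eus S$ to $\xi_{\eus S'}$ via an explicit one-parameter subgroup would be the most satisfying route, but producing such a degeneration in general appears to require genuinely new input.
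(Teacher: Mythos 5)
The statement you are trying to prove is, in the paper, an open \textbf{conjecture}, not a theorem. The author explicitly writes immediately above it that ``we do not know a general solution'' to the closure problem and says the conjecture is only ``supported'' by computations in small rank; Example~\ref{ex:D4} is included precisely to show that both parts fail once one leaves the \textsf{ANR} setting. There is therefore no paper proof against which to measure your attempt.

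Taken on its own terms, your reduction of (ii) via Proposition~\ref{prop:tangent-sp-s} to the identity $\ell(\sigma_{\eus S})=\#\eus S+2\,\#\eus M^*_{\eus S}$ is correct, and the inversion-set decomposition into a sign-fixed part $F=\Delta^+\cap\mathrm{span}(\eus S)$ and a paired part $P$ is the natural line of attack (your $\spn$ example with $\eus S=\{2\esi_1,2\esi_2\}$ does check out numerically). But the two-to-one map $P\to\eus M^*_{\eus S}$ is only asserted, its surjectivity and the claimed fibre size are not verified, and the non-simply-laced excess/deficit bookkeeping is explicitly deferred to an unperformed case analysis — so (ii) is not proved. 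For (i), you lean on \cite{Anna06,ign} for $\sln$ and on \cite{bagno-chern,chern11} for the $GL_n$-actions arising from the \textsf{ANR}s of $\spn$ and $\sone$, but the latter are exactly the references Remark~\ref{rem:kritika} singles out as containing unjustified steps; you flag this yourself but do not repair it, and you also defer the transport of Ignatyev's Bruhat-order theorem along the surjection $\ut^*\twoheadrightarrow\ah^*$ to a one-line assertion. You have identified the right reduction and the genuine obstacles, and you are honest that overcoming them ``appears to require genuinely new input'' — but that means your proposal is a research plan, not a proof, of a statement that the paper itself leaves open.
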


\noindent
Recall from the Introduction that $\ell$ is the usual length function on $W$, and $\rk(1-\sigma_\eus S)$ is 
the rank of $1-\sigma_\eus S$ as endomorphism  of $\te$ (also known as the {\it absolute length} of 
$\sigma_\eus S\in W$). Therefore $\rk(1-\sigma_\eus S)=\#\eus S$. Since the posets 
$(\p^u/B,{\curle)}$ and $((\p^u)^*/B, \curle)$ are isomorphic,  isomorphism being given by the action of 
$w_{0,L}$ on $\mathfrak S_{\p^u}$ (Proposition~\ref{prop:isom-p-u}), Conjecture~\ref{conj:bruhat-closure-ahs} 
can be restated in terms of the $B$-orbits in  $\p^u$ as follows:

{\bf Conjecture'}.  \ {\it For the $B$-orbits in the \textsf{ANR} \ $\p^u$, we have 
\begin{itemize}
\item[\sf (i)'] \ $\co_{\eus S'}\curle  \co_\eus S$ \ if and only if \ 
$\sigma_{w_{0,L}(\eus S')}\le \sigma_{w_{0,L}(\eus S)}$\,;
\item[\sf (ii)'] \  $\displaystyle \dim \co_\eus S=
\frac{\ell(\sigma_{w_{0,L}(\eus S)})+\#\eus S}{2}.$
\end{itemize} }

\noindent
Recall that all abelian nilradicals are maximal abelian ideals (but not vice versa!). But 
Conjecture~\ref{conj:bruhat-closure-ahs} cannot be true for {\bf all} maximal abelian ideals.

\begin{ex}   \label{ex:D4}
For $\g=\mathfrak{so}_8$, there are four maximal abelian $\be$-ideals. Three of them are \textsf{ANR} 
of dimension $6$, and the fourth maximal ideal $\ah$ is 5-dimensional. For the standard choice of simple
roots $\Pi=\{\esi_1-\esi_2,\esi_2-\esi_3, \esi_3-\esi_4, \esi_3+\esi_4\}$, 
the corresponding set of roots is:
$\dah=\{\esi_1-\esi_4,\,\esi_1+\esi_4,\,\esi_1+\esi_3,\,\esi_2+\esi_3,\,\esi_1+\esi_2\}$.
Consider  
\[ 
\eus S=\min(\dah)=\{\esi_1-\esi_4,\,\esi_1+\esi_4,\,\esi_2+\esi_3\}. 
\]
Then $\eus S=\eus C^{l}$, i.e., $\dim\co_{\eus S}=5$, but we have 
$\dim\co^*_{\eus S}=3$ in the dual space. Here $\ell(\sigma_\eus S)=11$ and $\rk(1-\sigma_\eus S)=3$, but $3\ne (11+3)/2$.
The open $B$-orbit in $\ah^*$ correspond to $\eus C^u=\{\esi_1+\esi_2\}=\{\theta\}$ with
$\ell(\sigma_{\eus C^u})=\ell(\sigma_\theta)=9$. 
Therefore $\sigma_{\eus S}\not\le\sigma_{\eus C^u}$.
\end{ex}

\begin{rmk}
It is interesting that  $\textsf{Inv}(W)$ equipped with (the restriction of) the Bruhat 
order is a graded poset and the rank function is exactly 
$\sigma \mapsto \frac{\ell(\sigma)+\rk(1-\sigma)}{2}$, see \cite[Theorem\,4.8]{hult}. 
(For the classical cases, this was earlier proved by F.\,Incitti.) Furthermore, the integer 
$\ell(\sigma)+\rk(1-\sigma)$, $\sigma\in \textsf{Inv}(W)$, occurs in the study of spherical conjugacy 
classes in $G$, see \cite{3-it}, \cite{1-kit}.
This suggests that there might be more 
interesting relations between involutions of $W$ and $B$-orbits in abelian ideals.
\end{rmk}


\begin{thebibliography}{33}

\bibitem{bagno-chern} {\sc E.~Bagno} and {\sc Y.~Cherniavsky}. Congruence $B$-orbits 
and the Bruhat poset of involutions of the symmetric group,
{\it Discrete Math.}, {\bf 312}\,(2012),  1289--1299.

\bibitem{br} {\sc M.~Boos} and {\sc M.~Reineke}.
$B$-orbits of 2-nilpotent matrices and generalizations, in  ``Highlights in Lie algebraic methods'', 
147--166, Progr. Math., {\bf 295}, Birkh\"auser/Springer, New York, 2012.

\bibitem{brion}  {\sc M.~Brion}. 
Invariants et covariants des groupes alg\'ebriques r\'eductifs, 
In: ``{\it Th\'eorie des invariants et g\'eometrie des vari\'et\'es quotients}''
(Travaux en cours, t.\,{\bf 61}), 83--168, Paris: Hermann, 2000.

\bibitem{3-it}  {\sc N.~Cantarini, G.~Carnovale}, and {\sc M.~Costantini}. 
Spherical orbits and representations of $\mathcal U_\esi(\g)$, 
{\it Transform. Groups},  {\bf 10}\,(2005), no. 1, 29--62.

\bibitem{chern11} {\sc Y.~Cherniavsky}. 
On involutions of the symmetric group and congruence $B$-orbits of anti-symmetric matrices, 
{\it Internat. J. Algebra Comput.}, {\bf  21}\,(2011), no.\,5, 841--856.

\bibitem{hult} {\sc A.~Hultman}. Fixed points of involutive automorphisms of the Bruhat order, 
{\it Adv. Math.}, {\bf 195}\,(2005), no.\,1, 283--296.

\bibitem{jos77} {\sc A.~Joseph}. A preparation theorem for the prime spectrum of a semisimple Lie algebra, {\it J. Algebra}, {\bf 48}\,(1977), 241--289.

\bibitem{ign} {\sc M.~Ignatyev}. 
Combinatorics of $B$-orbits and Bruhat-Chevalley order on involutions, {\it Transform. Groups}, 
{\bf 17}\,(2012), no. 3, 747--780.

\bibitem{ko12} {\sc B.~Kostant}. The cascade of orthogonal roots and the coadjoint structure of the nilradical of a Borel subgroup of a semisimple Lie group, 
{\it  Moscow Math. J}, {\bf 12}\,(2012), 605--620.

\bibitem{1-kit} {\sc J.-H.~Lu}.
On a dimension formula for spherical twisted conjugacy classes in semisimple algebraic groups, 
{\it Math. Z.}, {\bf 269}\,(2011), 1181--1188.

\bibitem{Anna00} {\sc A.~Melnikov}. $B$-orbits in solutions to the equation $X^2=0$ in triangular 
matrices, {\it J. Alg.}, {\bf 223}\,(2000), 101--108.

\bibitem{Anna06} {\sc A.~Melnikov}.
Description of $B$-orbit closures of order 2 in upper-triangular matrices, 
{\it Transform. Groups}, {\bf 11}\,(2006), no.\,2, 217--247.

\bibitem{MS}  {\sc E.~Miller} and {\sc B.~Sturmfels}. ``Combinatorial commutative algebra", Graduate 
Texts in Mathematics, {\bf 227}. Springer-Verlag, New York, 2005. xiv+417 pp.

\bibitem{sph-Man} {\sc D.~Panyushev}.
Complexity and nilpotent orbits,
{\it Manuscripta Math.}, {\bf 83}\,(1994), 223--237.

\bibitem{aura}  {\sc D.~Panyushev}. 
Parabolic subgroups with Abelian unipotent radical as a testing site
for Invariant Theory, {\it Canad. J. Math.}, {\bf 51}\,(1999), no.\,3, 616--635.

\bibitem{imrn} {\sc D.~Panyushev}. Abelian ideals of a Borel subalgebra and
long positive roots, {\it Intern. Math. Res. Notices}, (2003), no.\,35, 1889--1913.

\bibitem{JEMS}  {\sc D.~Panyushev}. Abelian ideals of a Borel subalgebra and root systems,
{\it J. Eur. Math. Soc.}, to appear ({\sf Preprint MPIM} 2012--32 = 
{\sf arXiv:~math.RT 1205.5983}, 17~pp.).

\bibitem{pr01} {\sc D.~Panyushev} and {\sc G.~R\"ohrle}. 
Spherical orbits and abelian ideals, 
{\it Adv. Math.}, {\bf 159}\,(2001), 229--246.

\bibitem{pi75} {\rusc V.S.~Pyasetski{\u\i}}. {\rus Line{\u\i}nye gruppy Li, de{\u\i}stvuyuwie s koneqnym 
qislom orbit}, {\rusi Funkc. analiz i ego prilozh.}, 
 {\rus t.}{\bf 9}, {\rus vyp.}\,4 (1975), 85--86 (Russian). English translation: 
{\sc V.S.~Pyasetskii.} Linear Lie groups acting with finitely many orbits,
{\it Funct. Anal. Appl.}, {\bf 9}\,(1975), 351--353.

\bibitem{RRS} {\sc R.~Richardson, G.~R\"ohrle} and {\sc R.~Steinberg}. 
Parabolic subgroups with Abelian unipotent radical, {\it Invent. Math.}, {\bf 110}\,(1992),
649--671.

\bibitem{rothbach} {\sc B.~Rothbach.} Borel orbits of $X^2 = 0$ in $\gln$. Thesis (Ph.D.)---University of California, Berkeley. 2009. 108 pp. ISBN: 978-1109-48791-6  (MR2714006).

\bibitem{sk77} {\sc M.~Sato} and {\sc T.~Kimura}. A classification of irreducible prehomogeneous vector spaces and their relative invariants, {\it Nagoya Math. J.},  {\bf 65}\,(1977), 1--155.

\bibitem{oeis}  {\sc N.J.A.~Sloane}. The On-line Encyclopedia of Integer Sequences,  \href{http://oeis.org}{http://oeis.org}

\bibitem{t41} {\rusc {E1}.B.~Vinberg, V.V.~Gorbatseviq, A.L.~Oniwik}.
``{\rusi Gruppy i algebry Li - 3}'', {\rus  Sovrem. probl. matematiki. Fundam. napravl., t.\,41.
Moskva: VINITI} 1990 (Russian). English translation: {\sc V.V.\,Gorbatsevich, A.L.\,Onishchik} and {\sc E.B.\,Vinberg}.
``Lie Groups and Lie Algebras III'' (Encyclopaedia Math. Sci., vol.~41) Berlin: Springer 1994.



\end{thebibliography}
\end{document}